\tikzstyle{EdgeStyle}=[line width=1.2pt, color=cyan]
\tikzstyle{VertexStyle}=[shape=circle, inner sep=6pt, ball color=green!20]
\newcolumntype{L}{>{$}l<{$}} % math-mode version of "l" column type 
\newcolumntype{R}{>{$}r<{$}} % math-mode version of "r" column type 
\newcolumntype{C}{>{$}c<{$}} % math-mode version of "l" column type
\numberwithin{equation}{section}
\numberwithin{table}{section}
\numberwithin{figure}{section}
\newcommand\floor[1]{\lfloor#1\rfloor} 
\newcommand\ceil[1]{\lceil#1\rceil}
\crefname{itm}{}{}
\crefname{ineq}{Ineq.}{Ineqs.}
\crefname{sec}{\S}{\S}
\newtheorem{theorem}{Theorem}[section]
\newtheorem{lemma}[theorem]{Lemma}
\newtheorem{proposition}[theorem]{Proposition}
\setlist[enumerate,itemize]{
leftmargin=0.6cm, 
labelsep=5pt, 
}
\title[The edge dimension of $P(n,3)$ is 4]{{
The edge dimension of the generalized Petersen graph $P(n,3)$ is 4}}
\author[D.G.L. Wang]{David G.L. Wang$^\dag$$^\ddag$}
\address{
$^\dag$School of Mathematics and Statistics, Beijing Institute of 
Technology, 102488 Beijing, P. R. China\\
$^\ddag$Beijing Key Laboratory on MCAACI, Beijing Institute of 
Technology, 102488 Beijing, P. R. China}
\email{glw@bit.edu.cn}
\author[M.M.Y. Wang]{Monica M.Y. Wang}
\address{
School of Mathematics and Statistics, Beijing Institute of 
Technology, 102488 Beijing, P. R. China}
\email{mengyu919@bit.edu.cn}
\author[S.Q. Zhang]{Shiqiang Zhang}
\address{
School of Mathematics and Statistics, Beijing Institute of 
Technology, 102488 Beijing, P. R. China}
\email{shiqiang@bit.edu.cn}
\subjclass[2010]{05C30}
\keywords{generalized Petersen graph, metric dimension, resolving set, Floyd-Warshall algorithm}
\thanks{Corresponding author: David G.L. Wang.}
\thanks{This paper was supported by National Natural Science Foundation of China (Grant No.\ 11671037).}
\begin{document}

\begin{abstract}
It is known that the problem of computing the edge dimension of a graph is NP-hard, and that the edge dimension of any generalized Petersen graph $P(n,k)$ is at least 3. We prove that the graph $P(n,3)$ has edge dimension 4 for $n\ge 11$, by showing semi-combinatorially the nonexistence of an edge resolving set of order 3 and by constructing explicitly an edge resolving set of order 4.
\end{abstract}

\maketitle                   
\tableofcontents

\section{Introduction}
Let $n\ge 3$ and $1\le k<n/2$.
The \emph{generalized Petersen graph}, denoted $P(n,k)$, 
is the graph with vertex set
$\{u_j,v_j\colon j\in \mathbb{Z}_n\}$
and edge set 
$\{u_ju_{j+1},\,v_jv_{j+k},\,u_jv_j\,|\, j\in\mathbb{Z}_n\}$,
where $\mathbb{Z}_n$ is the additive group 
of integers modulo $n$.
The generalized Petersen graphs
was introduced in 1950 by Coxeter~\cite{Cox50}
and was given its name in 1969 by Watkins~\cite{Wat69}
in a consideration of a conjecture of Tutte~\cite{Tut69B}.
For extensive surveys on the Petersen graph $P(5,2)$,
see \cite{CHW92,HS93B}, which also involve all kinds of variantions 
of the Petersen graph, including the family $P(n,k)$.
Many structural and algorithmic properties of the generalized Petersen graphs have been extensively investigated,
among which \cite{Als83,ARR81,Ban78,BBP08,BPZ05,CP72,FGW71,Lov97,NS95,Sch89,SR84}
are oft-cited work that are used to introduce the family $P(n,k)$.
Recent advances in the field include \cite{XK11,DM17,EG19,JW19,KMM17,YW18,KW20+}, 
most of which are concerned with some specific graph theoretical concept for $P(n,k)$.

For NP-hard and NP-complete problems, 
research on the graph $P(n,3)$ often attracts attention
and usually costs a considerable effort. 
For instance, Hlin\v en\'y~\cite{Hli06} showed 
that deteriming the crossing number of a cubic graph is NP-hard;
Richter and Salazar~\cite{RS02} found a formula for the crossing number for $P(n,3)$
according to the residuce of $n$ modulo 3.
As another example,
Bre\v sara and \v Sumenjakb~\cite{BS07}
showed the NP-completeness of the 2-rainbow domination problem;
Xu~\cite{Xu09} enhanced the upper bound of the 2-rainbow domination number of $P(n,3)$ according to the residuce of $n$ modulo~16. 

In 2018,
Kelenc, Tratnik, and Yero~\cite{KTY18} 
introduced the concept of \emph{edge dimension} for a graph,
and showed the NP-hardness of computing the edge dimension of a graph.
They also pointed out that the edge dimension has applications in network security surveillance. In fact, an intruder accesses a network through edges
can be identified by an edge resolving set. 
This paper is concerned with the edge dimension of the graph $P(n,3)$.

For any list $w=(v_1,v_2,\dots,v_k)$ 
of vertices and any vertex $v$ in a connected graph $G$, 
the \emph{representation} of $v$ with respect to $w$ is the list
$\brk1{d(v,v_1),d(v,v_2),\dots,d(v,v_k)}$, 
where $d(x,y)$ is the distance between the vertices $x$ and $y$. 
The set $\{v_1,v_2,\dots,v_k\}$ is said to be 
a \emph{resolving set} for~$G$
if every two vertices of $G$ have distinct representations. 
It was Slater \cite{Sla75} who firstly considered the minimum cardinality of a resolving set for $G$, called the \emph{metric dimension} of $G$. 
In analog, the \emph{edge dimension} for a graph
is the minimum cardinality of a vertex set $\{v_1,v_2,\dots,v_k\}$ 
such that the lists $\brk1{d(e,v_1),d(e,v_2),\dots,d(e,v_k)}$ 
for all edges $e$ are distinct,
where $d(e,v_i)$ is the distance between the edge $e$ and the vertex $v_i$.
For instance, the graph $P(8,3)$ with an edge resolving set $\{u_0,u_1,u_2,v_3\}$
is illustrated in \cref{fig:P83}.
\begin{figure}[htbp]
\begin{tikzpicture}[scale=0.7, rotate=90]
\grCycle[prefix=u,RA=3]{8}
\AssignVertexLabel{u}{$u_0$, $u_7$, $u_6$, $u_5$, $u_4$, $u_3$, $u_2$, $u_1$};
\grCirculant[prefix=v,RA=1.5]{8}{3}
\AssignVertexLabel{v}{$v_0$, $v_7$, $v_6$, $v_5$, $v_4$, $v_3$, $v_2$, $v_1$};
\EdgeIdentity{u}{v}{8}
\draw[color=red, thick] (3,0) circle[radius=.6cm];
\draw[color=red, thick] (2.15,-2.12) circle[radius=.6cm];
\draw[color=red, thick] (0,-3) circle[radius=.6cm];
\draw[color=red, thick] (-1.07,-1.07) circle[radius=.6cm];
\end{tikzpicture}
\caption{The generalized Petersen graph $P(8,3)$ with an edge resolving set $\{u_0,u_1,u_2,v_3\}$.}\label{fig:P83}
\end{figure}
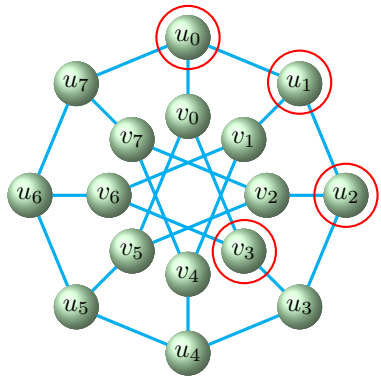
The exact values of the edge dimension for some classes of graphs were known, 
while bounds are given to some other graph classes.
The edge dimension of an $n$-vertex graph is at most $n-1$; see~\cite{Zub18,ZTSX19}.
For a rich resource of different kinds of resolving sets of graphs with applications,
see Kelenc, Kuziak, Taranenko and Yero~\cite{KKTY17}.
More progress on the edge dimension 
can be found from~\cite{Yero16,PY20}. 

Fillipovi\'c, Kartelj and Kratica~\cite{FKK19} 
showed that the edge dimension of the generalized Petersen graph $P(n,k)$ is at least~3, and confirmed that the edge dimension of $P(n,k)$ equals 3 for $k\in\{1,2\}$ and $n\ge 10$. 
Here is our main result.

\begin{theorem}\label{thm:edim=4}
For $n\ge 11$, 
the edge dimension of the generalized Petersen graph $P(n,3)$
is 4.
\end{theorem}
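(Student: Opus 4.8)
The plan is to prove the two inequalities $\operatorname{edim}(P(n,3))\le 4$ and $\operatorname{edim}(P(n,3))\ge 4$ separately; since $\operatorname{edim}(P(n,k))\ge 3$ is already available from Fillipovi\'c--Kartelj--Kratica, the real content lies in the explicit size-$4$ construction and in ruling out every size-$3$ candidate. For the upper bound I would exhibit one explicit four-vertex set that edge-resolves $P(n,3)$ for all $n\ge 11$; the natural candidate is the cluster $W=\{u_0,u_1,u_2,v_3\}$ appearing in \cref{fig:P83}, or a small translate of it should the wrap-around force an adjustment. To verify that $W$ is edge-resolving I would first record closed-form distances $d(u_i,w)$ and $d(v_i,w)$ for each $w\in W$, then compute $d(e,w)=\min\{d(x,w),d(y,w)\}$ for every edge $e=xy$, and finally check that the three edge classes---outer edges $u_ju_{j+1}$, inner edges $v_jv_{j+3}$, and spokes $u_jv_j$---receive pairwise distinct four-tuples. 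The role of $W$ is transparent: the distances to $u_0,u_1,u_2$ essentially read off the position of an edge along the two cycles, while the off-axis landmark $v_3$ breaks the residual outer/inner symmetry, so only a bounded number of near-cluster coincidences must be resolved by hand.

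Both bounds rest on understanding shortest paths in $P(n,3)$. A shortest path alternates between unit steps along the outer cycle and inner shortcuts of the form $u_i\to v_i\to v_{i\pm3}\to u_{i\pm3}$, and optimizing the mixture yields piecewise-linear distance formulas in the index difference, with a number of pieces bounded independently of $n$ and a mild dependence on $n\bmod 6$ coming from the two directions of winding. I would derive these formulas by displaying explicit paths for the upper estimates together with short lower-bound arguments, cross-checking them against the output of the Floyd--Warshall algorithm on several representative values of $n$ to guard against arithmetic slips; this is the sense in which the argument is semi-combinatorial.

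The heart of the proof, and the main obstacle, is the lower bound: no three vertices can edge-resolve $P(n,3)$. Assuming for contradiction that $S=\{a,b,c\}$ is edge-resolving, I would first use the rotation $u_j\mapsto u_{j+1},\,v_j\mapsto v_{j+1}$ and the reflection $u_j\mapsto u_{-j},\,v_j\mapsto v_{-j}$ to normalize $S$, and then split into the four cases according to how many of $a,b,c$ are outer versus inner vertices. In each case the three landmarks leave a landmark-free arc of length exceeding $n/3$ on the cycles, and inside such an arc the graph is locally homogeneous; the goal is to pin down two distinct edges deep inside the arc whose distances to all of $a,b,c$ agree, contradicting the resolving property.

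The difficulty is that the colliding pair depends on the precise placement of $a,b,c$ and on $n\bmod 6$, so a single uniform pair will not suffice. I would manage this by using translation to fix, say, $a=u_0$, bounding the relevant index gaps, and thereby reducing the infinitely many instances to finitely many residue classes of $n$ together with finitely many configurations of the gaps between the landmarks; each surviving configuration is then settled by a finite Floyd--Warshall computation on a representative $n$, whose conclusion extends to all larger $n$ because the bulk distance formulas become eventually constant in $n$. Verifying that this finite check genuinely exhausts all configurations---and that the chosen representatives are large enough for the bulk formulas to apply---is where most of the care will go.
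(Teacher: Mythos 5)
Your overall architecture (distance formulas, an explicit tetrad for the upper bound, normalization plus case analysis for the lower bound) matches the paper's, but both halves of your plan contain genuine gaps.

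For the upper bound, you propose the cluster $\{u_0,u_1,u_2,v_3\}$ from \cref{fig:P83}; that figure only asserts this set works for $n=8$. The paper's resolving tetrads (\cref{tab:ResolvingTetrad}) all combine two vertices near index $0$ with a vertex of index near $n/2$, and its verification shows why: among the roughly $18$ edges at a fixed distance $d$ from $u_0$, several pairs already share their distances to all the nearby landmarks and are separated only by the far one. Your heuristic that $u_0,u_1,u_2$ ``read off the position of an edge'' is not an argument---for an edge near the antipode all four of your coordinates lie in a bounded window around $d\approx n/6$, so the dangerous coincidences are exactly those far from the cluster, not the ``near-cluster'' ones you propose to check by hand. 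You would need to carry out the full verification of your candidate for every residue class of $n$, and nothing in the proposal (or in the paper) indicates it succeeds.

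For the lower bound, the step that fails is the claimed reduction to ``finitely many configurations of the gaps,'' each settled by a Floyd--Warshall computation on one representative $n$. After normalizing one landmark to index $0$, the other two indices still range over $\Theta(n^2)$ pairs whose gaps grow with $n$; there is no finite list of configurations, and a check at one value of $n$ does not transfer to larger $n$ (the distances grow linearly in $n$; they do not become ``eventually constant''). The paper closes this gap structurally: for each landmark $w$ it determines the explicit, positive-density, $3$-periodic set of indices $i$ with $d(w,e_{i-1}^u)=d(w,e_i^u)$ (\cref{lem:A,lem:D}), notes that the set for $v_t$ contains the set for $u_t$, and proves that the triple intersection of these translated sets is nonempty for all but $22$ sporadic placements $(a,b)$ (\cref{prop:S-W}), which are then killed by exhibiting explicit colliding edge pairs (\cref{lem:W}). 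Some uniform-in-$n$ argument of this kind is indispensable; without one, your plan does not close.
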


This paper is organized as follows. 
In \cref{sec:fml:dist}, we give a formula for the distance between any vertex and any edge in the graph $P(n,3)$, which serves as the basis for all arguments in the sequel. 
In \cref{sec:edim>=4,sec:edim<=4}, we prove that the edge dimension of $P(n,3)$
has lower and upper bound 4, respectiely. Moreover, the vertex tetrad $\{u_0,u_1,x,y\}$ with $\{x,y\}$ presented in \cref{tab:ResolvingTetrad}
is an edge resolving set of $P(n,3)$ for $n\ge 19$.
\begin{table}[h]
\centering
\caption{Vertex pairs $\{x,y\}$ such that the tetrad
$\{u_0,u_1,x,y\}$ is an edge resolving set of the graph $P(n,3)$, 
where $n\ge 19$, and $r_n'$ is the residue of $n$ modulo 6.}
\label{tab:ResolvingTetrad}
\begin{tabular}{@{} CCCCC @{}}
\addlinespace
\toprule
r_n' & 0,1,3 & 2 & 4 & 5\\
\midrule
\{x,y\}
& \brk[c]1{v_2,\,u_{\floor{n/2}-1}} 
& \brk[c]1{u_{n/2-3},\,v_{n/2-2}} 
& \brk[c]1{v_2,\,u_{n/2+3}} 
& \brk[c]1{u_{\floor{n/2}-1},\,v_{\floor{n/2}}} \\
\bottomrule
\end{tabular}
\end{table}

\section{Strategy of our proof}\label[sec]{sec:strategy}
The edge dimension is all about the distance between a vertex and an edge. 
It is known that the time complexity of finding the shortest paths 
between any two given vertices in a graph is $O(n^3)$;
see the Floyd-Warshall algorithm in \cite[\S 25.2]{CLRS09}. 
In order to establish \cref{thm:edim=4}, however, 
we further need a formula for the distance between a given vertex and a given edge.
Denote
\[
e_i^u=u_i u_{i+1},\quad
e_i^v=v_i v_{i+3},\quad\text{and}\quad
e_i^s=u_iv_i.
\]
For $n\in\mathbb{Z}$,
define $q_n, r_n\in\mathbb{Z}$ by $n=3q_n+r_n$, where $r_n\in\{0,1,2\}$. 

\begin{theorem}\label{thm:dist:ve}
Let $n\ge 13$ and $0\le i\le n-1$. In the graph $P(n,3)$, we have
\begin{align*}
d(u_0,\,e_i^u)
&=\begin{cases}
\min(i,\,n-1-i),&\text{if $i\le 2$ or $i\ge n-3$},\\
\min\brk1{\ceil{i/3}+2,\,\ceil{(n-i-1)/3}+2},&\text{otherwise};
\end{cases}\\%%%%%%%%%%%%%%%%%%%%%%%%%%%%%%%%%%%%%%%%%%%%%%%%%%%%%%%%%%%%uo
d(u_0,\,e_i^s)
&=\begin{cases}
\min(i,\,n-i),&\text{if $i\le 2$ or $i\ge n-2$},\\
\min\brk1{q_i+r_i+1,\,q_n-q_i+r+1},&\text{otherwise};
%m_{q_n+r_i+r+2}(q_i+r_i+1),
\end{cases}\\%%%%%%%%%%%%%%%%%%%%%%%%%%%%%%%%%%%%%%%%%%%%%%%%%%%%%%%%%%%%us
d(u_0,\,e_i^v)
&=\begin{cases}
\min(i+1,\,n-i+1),&\text{if $i\le 1$ or $i=n-1$},\\
\min(q_i+r_i+1,\,q_n-q_i+r),&\text{otherwise};
\end{cases}\\%%%%%%%%%%%%%%%%%%%%%%%%%%%%%%%%%%%%%%%%%%%%%%%%%%%%%%%%%%%%%uv
d(v_0,\,e_i^u)
&=\begin{cases}
\min(q_i+\floor{r_i/2}+1,\,q_n-q_i+\floor{r_n/2}-\floor{r_i/2}+1),
%m_{q_n+\floor{r_n/2}+2}(q_i+\floor{r_i/2}+1),
&\text{if $r_i=0$ or $(r_n,\,r_i)=(0,2)$},\\
\min(q_i+2,\,q_n-q_i+1),&\text{otherwise};
%m_{q_n+3}(q_i+2),
\end{cases}\\%%%%%%%%%%%%%%%%%%%%%%%%%%%%%%%%%%%%%%%%%%%%%%%%%%%%%%%%%%%%%%vo
d(v_0,\,e_i^s)
&=\begin{cases}
\min(q_i,\,q_n-q_i),&\text{if $r_i=r_n=0$},\\
%m_{q_n}(q_i),&\text{if $r_i=r_n=0$},\\
\min(q_i+r_n+1,\,q_n-q_i),&\text{if $r_i=r_n\ne0$},\\
%m_{q_n+r_n+1}(q_i+r_n+1)
\min(q_i,\,q_n-q_i+r_n+1),&\text{if $r_i=0\ne r_n$},\\
%m_{q_n+r_n+1}(q_i),&\text{if $r_i=0\ne r_n$},\\
\min(q_i+2,\,q_n-q_i+2),&\text{if $r_i=1\ne r_n$},\\
%m_{q_n}(q_i)+2,&\text{if $r_i=1\ne r_n$},\\
\min(q_i+3,\,q_n-q_i+r_n),&\text{if $r_i=2\ne r_n$};
%m_{q_n+r_n+2}(q_i+2)+1,&\text{if $r_i=2\ne r_n$};
\end{cases}\\%%%%%%%%%%%%%%%%%%%%%%%%%%%%%%%%%%%%%%%%%%%%%%%%%%%%%%%%%%%%%%vs
d(v_0,\,e_i^v)
&=\begin{cases}
\min(q_i,\,q_n-q_i-1),&\text{if $r_i=r_n=0$},\\
\min(q_i+r_i+2,\,q_n-q_i+r_n-r_i-1),&\text{if $r_i=r_n\ne0$},\\
\min(q_i,\,q_n-q_i+r_n+1),&\text{if $r_i=0\ne r_n$},\\
\min(q_i+3,\,q_n-q_i+2),&\text{if $r_i=1\ne r_n$},\\
\min(q_i+4,\,q_n-q_i+r_n+1),&\text{if $r_i=2\ne r_n$}.
%m_{q_n-1}(q_i),&\text{if $r_i=r_n=0$},\\
%m_{q_n+r_n+1}(q_i+r_i+2),&\text{if $r_i=r_n\ne0$},\\
%m_{q_n+r_n+1}(q_i),&\text{if $r_i=0\ne r_n$},\\
%m_{q_n+1}(q_i+1)+2,&\text{if $r_i=1\ne r_n$},\\
%m_{q_n+r_n+3}(q_i+3)+1,&\text{if $r_i=2\ne r_n$}.
\end{cases}%%%%%%%%%%%%%%%%%%%%%%%%%%%%%%%%%%%%%%%%%%%%%%%%%%%%%%%%%%%%%%vi
\end{align*}
Here $r=\abs{r_n-r_i}$, except when $(r_n,r_i)=(0,2)$ and $r=0$.
\end{theorem}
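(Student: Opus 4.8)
The plan is to reduce every vertex-to-edge distance to vertex-to-vertex distances and then certify closed formulas for the latter. Since the distance from a vertex $w$ to an edge $xy$ is $\min\brk1{d(w,x),d(w,y)}$, each of the six quantities in the statement is a minimum of two vertex distances: for instance $d(u_0,e_i^u)=\min\brk1{d(u_0,u_i),d(u_0,u_{i+1})}$, $d(u_0,e_i^v)=\min\brk1{d(u_0,v_i),d(u_0,v_{i+3})}$, and $d(u_0,e_i^s)=\min\brk1{d(u_0,u_i),d(u_0,v_i)}$, with the analogous three identities for the source $v_0$. The rotational automorphism $u_j\mapsto u_{j+1},\,v_j\mapsto v_{j+1}$ lets me fix the source at $u_0$ or $v_0$, and the reflection $j\mapsto -j$ lets me restrict to $0\le i\le n/2$ before unfolding to the full range. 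So the entire theorem rests on the four vertex-distance functions $d(u_0,u_i)$, $d(u_0,v_i)$, $d(v_0,u_i)$, and $d(v_0,v_i)$.

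To pin down each of these functions I would first guess its shape from the path geometry and then prove it exactly, without enumerating shortest paths. A shortest walk may use outer steps ($\pm1$ in index, cost $1$), inner steps ($\pm3$ in index, cost $1$), and spokes (cost $1$, switching track); since inner steps cover three index units per unit cost while outer steps cover one, the optimal route runs a short outer arc to an index congruent to the target modulo $3$, crosses a spoke, slides along the inner track, and possibly crosses back near the end. This heuristic yields the candidate formulas, whose residue-dependent cases reflect the fact that the inner edges partition the inner vertices into $\gcd(n,3)$ cycles, so that both the class $r_i$ of $i$ and the class $r_n$ of $n$ enter.

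The exact argument I would give for a candidate function $f$ is the standard three-point verification: (i) $f$ vanishes at the source; (ii) $f$ is $1$-Lipschitz, meaning $\abs{f(x)-f(y)}\le1$ across every edge of each type $u_ju_{j+1}$, $v_jv_{j+3}$, and $u_jv_j$; and (iii) every non-source vertex $x$ has a neighbour $y$ with $f(y)=f(x)-1$. Property (ii) forces $f(x)\le d(\text{source},x)$ along any path, while (i) and (iii) let me descend strictly to the source and conclude $d(\text{source},x)\le f(x)$; together they give $f=d$. Once the four vertex formulas are certified, the six edge formulas follow by taking the indicated minima and simplifying each piecewise expression, the branch switches occurring exactly at the indices where the two competing endpoint distances cross.

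The main obstacle will be the Lipschitz verification (ii) together with the descent condition (iii), carried out uniformly across the residue cases $r_n\in\{0,1,2\}$ and $r_i\in\{0,1,2\}$ and across the boundary regions where the governing $\min$ changes branch, namely the small-$i$ and near-$n$ clauses and the antipodal region around $i=n/2$. At these seams I must check that adjacent pieces agree or differ by at most $1$, so that the Lipschitz property survives, and that a downhill neighbour still exists; the exceptional entry ``$r=0$ when $(r_n,r_i)=(0,2)$'' flags precisely such a seam and will need separate attention. This is a finite but sizeable case analysis, and the Floyd--Warshall computation cited in \cref{sec:strategy} is exactly what I would use to generate the data that fixes the candidate formulas and to spot-check the certification for representative $n$ in each residue class.
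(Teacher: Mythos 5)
Your reduction of the six edge formulas to the four vertex-distance functions $d(u_0,u_i)$, $d(u_0,v_i)$, $d(v_0,u_i)$, $d(v_0,v_i)$, together with the symmetry reduction to $0\le i\le n/2$, is exactly what the paper does (see \cref{lem:dist:symm} and Step 3 of \cref{sec:strategy}). Where you genuinely diverge is in how the vertex distances themselves are established. The paper first proves a structural theorem (\cref{thm:undeviating}): some shortest path is undeviating and uses at most two spokes. This makes the set of candidate shortest paths so small that their lengths can be written down directly (\cref{lem:dist:min}), and the distance is the minimum of the clockwise and counterclockwise lengths (\cref{thm:dist:vv:i<n/2}). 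You instead propose to guess the closed forms and certify them by the standard three-point test: vanishing at the source, $1$-Lipschitz across every edge type, and existence of a strictly descending neighbour at every non-source vertex. That certification scheme is sound (with the small addendum that you must also check the candidate vanishes \emph{only} at the source, so the descent actually terminates there), and it has the virtue of being purely local and mechanical, needing no structural claim about shortest paths. What it costs you is exactly what the structural theorem buys the paper: an explanation of \emph{why} the formulas look as they do, and a derivation rather than a verification. Your local check must also be performed separately in each of the nine $(r_n,r_i)$ residue combinations and at every seam where the governing branch of the $\min$ switches, which is comparable in bulk to the paper's case analysis.

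The one substantive reservation is that your proposal never writes down the four candidate vertex-distance formulas (the analogue of \cref{thm:dist:vv:i<n/2}), deferring them to path-geometry heuristics and Floyd--Warshall data. Since those formulas, their exceptional cases (e.g.\ the $(r_n',i)=(5,\floor{n/2})$ anomalies), and the subsequent branch-by-branch comparison needed to pass from vertex distances to the stated edge formulas are precisely where the difficulty of the theorem lives, the proposal as written is a viable plan rather than a proof; carried out in full, it would work.
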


In \cref{sec:fml:dist}, we will establish \cref{thm:dist:ve} in 
the following 3 steps.
\begin{description}
\item[Step 1]
Figure out the structure of a shortest path between 2 vertices 
and obtain \cref{thm:undeviating}.
\item[Step 2]
Find a formula between any 2 vertices in $P(n,3)$;
see \cref{thm:dist:vv:i<n/2}.
\item[Step 3]
Compare the distances $d(x,y)$ and $d(x,z)$ 
for any vertex $x$ and any edge $yz$ in $P(n,3)$, and obtain the desired 
formula in \cref{thm:dist:ve}.
\end{description}
\Cref{thm:dist:ve} will serve as the basis of verifying and computing 
the distance between a vertex and an edge in $P(n,3)$.

Now we clarify the strategy of proving \cref{thm:edim=4}.
We use the word \emph{triad} (resp., \emph{tetrad})
to denote a set of order 3 (resp., 4),
and the word \emph{triple} (resp., \emph{quadruple})
to mean an ordered list of 3 (resp., 4) elements.
\Cref{thm:edim=4} is an immediate consequence of the combination of \cref{prop:edim>=4,prop:edim<=4}.

\begin{proposition}\label{prop:edim>=4}
For $n\ge 11$, 
the graph $P(n,3)$ has no edge resolving triads.
\end{proposition}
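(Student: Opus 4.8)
The plan is to argue by contradiction: assuming that $W=\{w_1,w_2,w_3\}$ is an edge resolving triad of $P(n,3)$, I would produce two distinct edges sharing the same representation with respect to $W$. I would first reduce the number of triads to consider by symmetry. Both $u_j\mapsto u_{j+1},\,v_j\mapsto v_{j+1}$ and $u_j\mapsto u_{-j},\,v_j\mapsto v_{-j}$ are automorphisms of $P(n,3)$, and applying either to $W$ merely permutes the coordinates of every representation; it therefore suffices to treat one representative from each orbit. Recording the three indices cyclically and letting $g_1,g_2,g_3$ with $g_1+g_2+g_3=n$ be the gaps between consecutive vertices, some gap is at least $n/3$, and after a rotation and possibly a reflection I would place it in a fixed position. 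What remains is a finite list of configuration types, organized by how many of $w_1,w_2,w_3$ sit on the inner layer---the cases $uuu$, $uuv$, $uvv$, and $vvv$---together with the coarse placement of the three vertices.

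The key observation guiding the search for a confused pair is that the obstruction does not come from the outer edges. By \cref{thm:dist:ve} every distance to an outer edge $e_i^u$ grows roughly linearly in $i$ with slope $1/3$, because a shortest path dives into the inner layer to use its length-$3$ jumps; consequently the three coordinates are constant on short blocks whose boundaries sit at three distinct residues modulo $3$, and this already separates nearby outer edges. The inner edges $e_i^v$ and the spokes $e_i^s$ are where resolution fails: their distances, as given by \cref{thm:dist:ve}, are coarser and depend heavily on $r_i$ and $r_n$, so a single vertex leaves many inner edges and spokes mutually equidistant. I would catalog, for a vertex of each type, precisely which inner pairs it fails to separate, and then show that three vertices cannot simultaneously break all of them: in each configuration there remain two edges---typically two inner edges $e_i^v,\,e_j^v$, or an inner edge paired with a spoke---with $d(e,w_t)=d(e',w_t)$ for all $t\in\{1,2,3\}$, contradicting that $W$ resolves.

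The principal obstacle is the modular bookkeeping forced by the two-layer structure. Since the spoke and inner-edge formulas in \cref{thm:dist:ve} branch on the residues $r_i$ and $r_n$, the confused pair must be chosen with compatible residues, and the whole argument splits according to the value of $r_n$ and according to which layer each $w_t$ occupies; the inner layer itself decomposes into $\gcd(n,3)$ cycles, which further complicates the distance comparisons. In addition, the boundary branches of the formula---edges lying within a bounded distance of some $w_t$---and the shortest gaps $g_i$ fall outside this coarse regime and must be checked directly; because $n\ge 11$ keeps the longest gap above $n/3$, only a bounded amount of such endpoint casework survives in each configuration, which I would dispose of by explicit computation from \cref{thm:dist:ve}. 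Assembling these cases into a proof that no placement of three vertices, of any type combination, separates all $3n$ edges is where the real effort lies, and it is precisely this finite but intricate case analysis that establishes the nonexistence of an edge resolving triad.
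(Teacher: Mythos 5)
There is a genuine gap, and it lies in your central structural claim. You assert that the outer edges are not the obstruction---that because the three coordinates of $d(\cdot,e_i^u)$ are ``constant on short blocks whose boundaries sit at three distinct residues modulo $3$, this already separates nearby outer edges''---and you therefore direct all the effort at inner edges and spokes. This is backwards. By \cref{thm:dist:ve}, $d(u_0,e_i^u)=\ceil{i/3}+2$ in the clockwise regime, so a single vertex $u_t$ separates the adjacent pair $(e_{i-1}^u,e_i^u)$ only when $i-t\equiv 1\pmod 3$; the set $A_0=\{i:d(u_0,e_{i-1}^u)=d(u_0,e_i^u)\}$ computed in \cref{lem:A} has density about $2/3$ in $\mathbb{Z}_n$. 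Three vertices can therefore resolve all adjacent outer pairs only if the three complements, each of density about $1/3$, tile $\mathbb{Z}_n$ almost perfectly---and there is no reason the three indices lie in distinct residue classes modulo $3$, nor does distinctness of residues suffice, since the separating residue differs between the clockwise and counterclockwise sides (it is $t+1$ on one side and $t+n-1$ on the other, so the picture depends on $n\bmod 6$). The paper's entire proof of \cref{prop:edim>=4} is built on exactly the phenomenon you dismiss: after the symmetry reduction to triples $(0,a,b)\in S_n$ (which you do have, in essentially the paper's form), it shows $A_0\cap A_a\cap A_b\ne\emptyset$ for all but $22$ sporadic pairs $(a,b)$, i.e.\ some adjacent pair of \emph{outer} edges is left unresolved, and the containment $A_t\subseteq B_t$ of \cref{lem:D} transfers this to all eight type-combinations $uuu,\dots,vvv$ at once. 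Only in the sporadic cases do inner edges and spokes enter as the confused pairs.

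Because your plan hinges on the false premise that outer edges are automatically resolved, the subsequent catalogue of ``inner pairs each vertex fails to separate'' would not close the argument: for a generic triad the unresolved pair is an outer pair, and a case analysis restricted to inner edges and spokes would simply never find it (and, conversely, would not establish nonexistence, since for many triads the inner edges and spokes \emph{are} all resolved). Beyond this, the proposal remains a plan rather than a proof---the decisive finite case analysis is announced but not carried out---but the misidentification of where resolution fails is the concrete error that would make the approach collapse if executed as described.
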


\begin{proposition}\label{prop:edim<=4}
For $n\ge 11$, the graph $P(n,3)$ has an edge resolving tetrad.
\end{proposition}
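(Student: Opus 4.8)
The plan is to prove \cref{prop:edim<=4} constructively, by exhibiting for each residue class of $n$ modulo $6$ the explicit tetrad $\{u_0,u_1,x,y\}$ recorded in \cref{tab:ResolvingTetrad} and verifying that the representation map
$e\mapsto \brk1{d(e,u_0),\,d(e,u_1),\,d(e,x),\,d(e,y)}$
is injective on the $3n$ edges $e_i^u,e_i^s,e_i^v$ of $P(n,3)$. The organising principle is that adjoining further coordinates to a representation can only refine the induced partition of the edge set; hence it suffices to (i) determine precisely which pairs of distinct edges are \emph{not} separated by the pair $\{u_0,u_1\}$, and (ii) check that each such pair is separated by $x$ or by $y$.

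First I would write down all four coordinate functions in closed form. The $u_0$-coordinates of $e_i^u,e_i^s,e_i^v$ are read directly from \cref{thm:dist:ve}. Since the rotation $\sigma\colon u_j\mapsto u_{j+1},\,v_j\mapsto v_{j+1}$ is an automorphism of $P(n,3)$, one has $d(u_1,e)=d(u_0,\sigma^{-1}e)$, so the $u_1$-coordinate is the $u_0$-formula with $i$ replaced by $i-1$; the $x$- and $y$-coordinates are obtained the same way, by composing the $d(u_0,\cdot)$ and $d(v_0,\cdot)$ formulas of \cref{thm:dist:ve} with the rotation that carries the base vertex to $x$ or $y$.

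Next I would analyse the fibres of the partial representation $\brk1{d(e,u_0),\,d(e,u_1)}$. Because $u_0$ and $u_1$ are adjacent, these two distances localise an edge rather tightly; the genuine coincidences come from the reflection $\tau\colon j\mapsto 1-j$, which is the automorphism fixing the edge $e_0^u$ and interchanging $u_0$ and $u_1$, so that $\tau(e)$ and $e$ always have interchanged $(u_0,u_1)$-coordinates and collide exactly when $d(e,u_0)=d(e,u_1)$. To these one must add the sporadic coincidences forced by the two branches of the minimum in \cref{thm:dist:ve} and by the interleaving of the three edge families. Enumerating this set of bad pairs exactly is the bulk of the argument, and since the formulas branch on $r_i=i\bmod 3$ and on $r_n=n\bmod 3$, the enumeration is organised by edge type and residue --- which is precisely why the choice of $\{x,y\}$ depends on $n$ modulo $6$.

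Finally, for each residue class I would verify step (ii): for every bad pair $\{e,f\}$ I confirm $d(e,x)\ne d(f,x)$ or $d(e,y)\ne d(f,y)$ by evaluating \cref{thm:dist:ve}. The vertices $x,y$ in \cref{tab:ResolvingTetrad} are chosen near the antipodal region of $e_0^u$ (with, in some residue classes, one of them close to $u_0$), so that they break the $\tau$-symmetry and simultaneously kill the remaining sporadic collisions. The range $11\le n\le 18$ lies outside \cref{tab:ResolvingTetrad} and will be disposed of by direct evaluation of \cref{thm:dist:ve} on all $3n$ edges. The main obstacle is step (ii) together with the preceding enumeration: the piecewise nature of \cref{thm:dist:ve} forces a residue-dependent bookkeeping in which one must be certain that no bad pair is overlooked and that the two supplementary coordinates jointly separate every one of them.
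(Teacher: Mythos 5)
Your proposal is correct and follows essentially the same route as the paper: the paper likewise takes the tetrad of \cref{tab:ResolvingTetrad}, splits by the residue of $n$ modulo $6$, and uses \cref{thm:dist:ve} to tabulate the remaining coordinates of all edges sharing a given value of $d(\cdot,u_0)$ (first the edges with $d(\cdot,u_0)\le 2$, then, for each $d\ge 3$, the roughly $18$ edges at distance $d$ from $u_0$), checking distinctness fiber by fiber, with the range $11\le n<100$ delegated to a computer verification. The only difference is your choice to fiber by the pair $\brk1{d(\cdot,u_0),d(\cdot,u_1)}$ and organize the collisions around the reflection $j\mapsto 1-j$ rather than by the single coordinate $d(\cdot,u_0)$, which is a bookkeeping variation and does not change the substance of the verification.
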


First of all, we point out that it suffices to show \cref{prop:edim>=4,prop:edim<=4} for $n\ge 100$.
In fact, for $n<100$,
one may quickly check that every vertex triad is not an edge resolving set
with the aid of a computer verification.
Precisely speaking,
we can suppose that $R=\{\alpha_0,\,\beta_y,\,\gamma_z\}$ by symmetry,
where $\alpha,\beta,\gamma\in\{u,v\}$ and $y,z\in\mathbb{Z}_n$.
With \cref{thm:dist:ve} in hand,
the time complexity of
checking that any vertex triad is not an edge resolving set is $O(n^4)$,
since there are $O(n^2)$ choices for $R$ and $O(n^2)$ choices for an edge pair.
In practice,
on a laptop with CPU clock rate \SI{2.3}{\giga\hertz} and 8 kernels,
it costs less than 3 minutes to confirm \cref{prop:edim>=4} for all $11\le n<100$. 
Similarly, the time complexity of checking that every tetrad is or is not 
an ecge resolving set is $O(n^5)$.
In practice, with the aid of \cref{tab:ResolvingTetrad},
verifying \cref{prop:edim<=4} for $n<100$ costs less than 3 hours.

From now on, we can suppose that $n\ge 100$.
This assumption saves us from considering an annoying number of singular cases 
appeared for small $n$. 
Though it can be made even earlier before proving 
\cref{thm:dist:ve},
our proof of \cref{thm:dist:ve} works well for $n<100$.

The strategy of proving \cref{prop:edim>=4} is as follows.
First, we concentrate on the pairs of adjacent edges on the outer cycle and
determine the sets 
\[
A_0=\{i\in\mathbb{Z}_n\colon d(u_0,e_{i-1}^u)=d(u_0,e_i^u)\}
\quad\text{and}\quad
D_0=\{i\in\mathbb{Z}_n\colon d(v_0,e_{i-1}^u)=d(v_0,e_i^u)\}.
\]
As will be seen, it turns out quite fortunately that $A_0\subseteq D_0$.
Second, 
using the derived expressions of~$A_0$ and $D_0$ and by a symmetry argument, 
we can reduce \cref{prop:edim>=4} to
that for each pair $(a,b)\in S_n$, where
\[
S_n=\brk[c]1{(a,b)\in\mathbb{Z}^2\colon
1\le a\le \floor{n/3},\
2a\le b\le \floor{(n+a)/2}},
\]
no triad in the collection
\[
T(a,b)=\brk[c]1{\{\alpha_0,\,\beta_a,\,\gamma_b\}\colon \alpha,\beta,\gamma\in\{u,v\}}
\]
is an edge resolving set.
Since $\abs{T(a,b)}=8$, it is plausible
that the number of triads that we have to consider is still considerably large.
Born under a lucky star, in the third step, 
we manage to prove that $A_0\cap A_a\cap A_b\ne\emptyset$
for almost all pairs $(a,b)\in S_n$, where $A_t=\{x+t\colon x\in A_0\}$;
see \cref{sec:S-W}.
This leads us to a collection of 22 sporadic cases of pairs $(a,b)$, for which
we handle in \cref{sec:W}. 

We prove \cref{prop:edim>=4} according to the residuce $r_n'$ of $n$ modulo 6.
We adopt the same strategy to deal with each of the residues, which is as follows.
First, we show that the edges whose distances from~$u_0$ are less than $3$ are distinguishable. To do this, we list all such edges $e_1$ and compute 
the distances $d(v,e_1)$ for each of the prescribed vertex $v\in\{u_0,v_0,x,y\}$;
see \cref{tab:ResolvingTetrad}. This step is easy to be done by using \cref{thm:dist:ve}.
Second, for any fixed integer $d\ge 3$,
we find out all edges $e_2$ whose distances from $u_0$ are $d$.
The number of such edges $e_2$ for each residue case is about 20.
Computing the distances $d(v,e_2)$ for each prescibed vertices $v\in\{v_0,x,y\}$,
we obtain a table, from which one may see that the edge metric representations
with respect to the prescribed vertex tetrad are distinct.

\section{A formula for the vertex-edge distances in $P(n,3)$}\label[sec]{sec:fml:dist}

Let $G=(V,E)$ be a connected graph.
The distance between a vertex $x\in V$ 
and an edge $e=uv\in E$ 
is the integer $d(x,e)=\min\{d(x,u),\,d(x,v)\}$,
where $d(x,y)$ is the distance between $x$ and $y$.
We say that $x$ resolves two edges $e$ and $e'$ if $d(x,e)\neq d(x,e')$.
A vertex set $R=\{x_1, x_2, \dots, x_k\}$
is said to be an \emph{edge resolving set}
if every two distinct edges in $G$ are resolved by a member of $R$.
The \emph{edge dimension} for $G$
is the minimum cardinality of an edge resolving set of $G$.
For any edge $e\in E$, the list $\brk1{d(e,x_1),d(e,x_2),\dots,d(e,x_k)}$ 
is called the \emph{edge metric representation} of $e$ with respect to $R$.

In the generalized Petersen graph, 
the edges $u_iv_i$ are said to be \emph{spokes},
the subgraph induced by the vertices $u_0,u_1,\dots,u_{n-1}$ 
is said to be the \emph{outer cycle},
and that induced by the vertices $v_0,v_1,\dots,v_{n-1}$
is said to be the \emph{inner cycle(s)}.

\begin{lemma}\label{lem:dist:symm}
Consider the graph $P(n,k)$.
Let $i\in\mathbb{Z}$, $p_0\in\{u_0,\,v_0\}$ and $w\in\{u,\,v\}$. Then 
\[
d(p_0,\,w_i)=d(p_0,\,w_{-i})
\quad\text{and}\quad
d(u_0,\,v_i)=d(v_0,\,u_i). 
\]
Moreover, for any $i,j\in\mathbb{Z}$, we have
\begin{align*}
d(u_i,\,e_j^u)&=d(u_0,\,e_{j-i}^u)=d(u_0,\,e_{i-j-1}^u),\\
d(u_i,\,e_j^s)&=d(u_0,\,e_{j-i}^s)=d(u_0,\,e_{i-j}^s),\\
d(u_i,\,e_j^v)&=d(u_0,\,e_{j-i}^v)=d(u_0,\,e_{i-j-k}^v).
\end{align*}
\end{lemma}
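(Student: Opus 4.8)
The plan is to exploit the automorphism group of $P(n,k)$, using the elementary fact that any graph automorphism preserves distances. Two automorphisms suffice. The first is the rotation $\rho\colon w_j\mapsto w_{j+1}$ for $w\in\{u,v\}$; it is an automorphism because it sends the three edge families $u_ju_{j+1}$, $v_jv_{j+k}$ and $u_jv_j$ to $u_{j+1}u_{j+2}$, $v_{j+1}v_{j+1+k}$ and $u_{j+1}v_{j+1}$, respectively, each of which is again an edge. The second is the reflection $\sigma\colon w_j\mapsto w_{-j}$; it fixes both $u_0$ and $v_0$, and it sends the same three families to $u_{-j}u_{-j-1}$, $v_{-j}v_{-j-k}$ and $u_{-j}v_{-j}$, which are $e_{-j-1}^u$, $e_{-j-k}^v$ and $e_{-j}^s$, hence edges.

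For the first displayed identity, I would apply $\sigma$: since it fixes $p_0\in\{u_0,v_0\}$ and carries $w_i$ to $w_{-i}$, we get $d(p_0,w_i)=d(\sigma p_0,\sigma w_i)=d(p_0,w_{-i})$. The second displayed identity $d(u_0,v_i)=d(v_0,u_i)$ is the step that needs a little more thought, since $P(n,k)$ need not admit an automorphism swapping the inner and outer cycles. I would instead combine the two maps: applying $\rho^{-i}$ gives $d(u_0,v_i)=d(u_{-i},v_0)=d(v_0,u_{-i})$, and then the first identity (with $p_0=v_0$, $w=u$) yields $d(v_0,u_{-i})=d(v_0,u_i)$.

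For the ``moreover'' part, I would treat the three edge types uniformly. Applying the rotation $\rho^{-i}$, which maps $u_i$ to $u_0$ and $e_j^u,e_j^s,e_j^v$ to $e_{j-i}^u,e_{j-i}^s,e_{j-i}^v$, respectively, gives the first equality in each of the three lines. Applying the reflection $\sigma$, which fixes $u_0$ and (writing $m=j-i$) sends $e_m^u\mapsto e_{-m-1}^u$, $e_m^s\mapsto e_{-m}^s$ and $e_m^v\mapsto e_{-m-k}^v$, yields the second equality, namely $d(u_0,e_m^u)=d(u_0,e_{-m-1}^u)=d(u_0,e_{i-j-1}^u)$ and similarly for the other two types. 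Here the only point requiring care is the index bookkeeping for $e^u$ and $e^v$: because each such edge is named after a single endpoint, the reflection produces the shifted indices $-m-1$ and $-m-k$ rather than simply $-m$, which is exactly the asymmetry recorded in the statement. I expect this indexing to be the only real source of potential error; the distance-preservation itself is immediate.
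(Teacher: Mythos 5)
Your proposal is correct and is exactly the symmetry argument the paper invokes (its proof is the one-liner ``all the results follow from the symmetry of $P(n,k)$''); you have merely made the rotation and reflection automorphisms, and the resulting index bookkeeping, explicit. All the index computations, including the derivation of $d(u_0,v_i)=d(v_0,u_i)$ via $\rho^{-i}$ followed by the reflection identity, check out.
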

\begin{proof}
All the results follow from the symmetry of the graph $P(n,k)$.
\end{proof}

Let $p=w_0 w_1 \dotsm w_l$ be a path in $P(n,k)$,
where $w_i$ are vertices of $P(n,k)$. Denote the length $l$ by~$\ell_p$.
We can rewrite $p=p_0p_1\dotsm p_s$, 
where each $p_j$ is a path contained entirely in either the outer cycle
or an inner cycle.
Accordingly we call $p_j$ an \emph{outer section} or \emph{inner section}.
We say that an edge $e$ is \emph{clockwise} 
if $e$ is of the form $u_j u_{j+1}$ or $v_j v_{j+k}$,
and \emph{counterclockwise} otherwise.
%Note that the length of a section~$p_j$ is positive except when
%either (i) $j=0$ and $w_0w_1$ is a spoke,
%or (ii) $j=s$ and $w_{l-1}w_l$ is a spoke.
A section $p_j$ is \emph{clockwise}
(resp., \emph{counterclockwise}) if every edge in $p_j$ is so.
The path $p$ \emph{clockwise} (resp., \emph{counterclockwise}) 
if every section $p_j$ is so.
We call $p$ \emph{undeviating} if it is either clockwise or counterclockwise.
For any $1\le j\le s$, we call the subpath $p_{j-1}p_{j}$ 
a \emph{turn} of $p$ if one of the subpaths $p_{j-1}$ and $p_j$
is clockwise and the other is counterclockwise.

\begin{theorem}\label{thm:undeviating}
For any vertices $x$ and $y$ in the grpah $P(n,3)$,
there is a shortest path from $x$ to $y$
which is undeviating and contains at most 2 spokes.
\end{theorem}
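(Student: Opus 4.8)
The plan is to run an exchange argument over the set of all shortest $x$--$y$ paths, transforming any one of them into an undeviating path with at most two spokes while never increasing the length. First I record what the two conclusions mean for the section decomposition $p=p_0p_1\dotsm p_s$. Consecutive sections lie on different kinds of cycle (one outer, one inner) and are joined by a spoke, so the number of spokes of $p$ equals the number of boundaries between its sections; ``at most two spokes'' therefore means at most three sections, and ``undeviating'' means all sections share a single orientation. By \cref{lem:dist:symm} I may normalise so that $x\in\{u_0,v_0\}$ and, after possibly reflecting via $j\mapsto-j$, so that the short way from $x$ to $y$ runs clockwise. It helps to project a walk onto $\mathbb{Z}_n$ by remembering only indices: an outer edge moves the index by $\pm1$, an inner edge by $\pm3$, and a spoke fixes the index while switching between the outer and the inner level. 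The length of $p$ is the number of these moves, so a shortest path is a cheapest level-respecting walk realising the prescribed index displacement, where each level switch (spoke) costs one.

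For the undeviating conclusion I choose, among all shortest $x$--$y$ paths, one with the fewest turns and assume for contradiction that a turn survives. A turn consists of two consecutive sections of opposite orientation meeting across a spoke; at their junction only finitely many local edge-type patterns can occur, namely a clockwise outer or inner edge meeting a counterclockwise outer or inner edge. For each such pattern I exhibit a length-preserving (or length-decreasing) local replacement that removes the opposition: for instance the three-edge piece $u_{a-1}\to u_a\to v_a\to v_{a-3}$, which advances then retracts, is replaced by the counterclockwise piece $u_{a-1}\to u_{a-2}\to u_{a-3}\to v_{a-3}$ of the same length, and the remaining patterns are handled in the same spirit. Each replacement strictly lowers the number of turns without lengthening the path, contradicting minimality; hence a shortest undeviating path exists.

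Assume now that $p$ is undeviating, say clockwise, so the index is weakly increasing along $p$ and the sections alternate outer and inner. Since every inner section changes the index by a multiple of $3$, an inner section preserves the residue of the index modulo $\gcd(n,3)$; the only function an outer section between two inner sections can serve is to adjust this residue (and, when $3\mid n$, to pass between distinct inner cycles). As a single adjustment suffices for the whole path, I consolidate all inner sections into one and place the residue-adjusting outer steps next to an endpoint. Crucially, such steps should be attached to an endpoint that already lies on the outer cycle, since attaching them to an inner endpoint would force an extra spoke. A short case analysis on the levels of $x$ and $y$ then yields the claim: if both endpoints are outer the pattern collapses to outer--inner--outer (two spokes); if exactly one endpoint is inner it collapses to outer--inner or inner--outer (one spoke); and if both are inner it is a single inner section (no spoke) unless $3\mid n$ forces a change of inner cycle, in which case it is inner--outer--inner (two spokes). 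In every case at most two spokes remain, and no consolidation increases the length.

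The main obstacle is the turn-elimination step, where the arithmetic of $P(n,3)$ really enters: because an outer edge spans one unit of index while an inner edge spans three, a clockwise section and a counterclockwise section do not cancel edge for edge, so I must check that each junction pattern admits a rerouting of no greater length and that, when $3\mid n$, no rerouting strands the path in the wrong inner cycle. Verifying this finite list of local moves---and confirming that the outcome is still a single undeviating path and not merely a shorter closed-up walk---is the crux of the argument; by contrast, once monotonicity of the index is in hand the spoke-reduction step is routine. Combining the two reductions produces a shortest $x$--$y$ path that is undeviating and uses at most two spokes, which is \cref{thm:undeviating}.
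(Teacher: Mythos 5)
Your overall strategy coincides with the paper's: choose a shortest path with the fewest turns, destroy the first turn by a local rerouting, and then bound the number of spokes of an undeviating shortest path by consolidating inner sections. The genuine gap is in the turn-elimination step. The one replacement you exhibit --- substituting $u_{a-1}u_{a-2}u_{a-3}v_{a-3}$ for $u_{a-1}u_{a}v_{a}v_{a-3}$ --- does not, as you assert, ``strictly lower the number of turns without lengthening the path'' in all configurations. First, it only makes sense when the clockwise outer section ending at $u_a$ is the single edge $u_{a-1}u_a$; if that section has length $s\ge 2$ the replacement revisits $u_{a-2}$ and is not a path (the paper treats $s\ge2$ by a different, strictly shorter rerouting). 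Second, even when $s=1$, your move reverses the orientation of an outer section: if the path reaches $u_{a-1}$ by the spoke from $v_{a-1}$ preceded by the clockwise inner edge $v_{a-4}v_{a-1}$, the new counterclockwise piece $u_{a-1}u_{a-2}u_{a-3}$ creates a fresh turn against that inner section, so the turn count does not drop and the extremal argument stalls. The paper escapes exactly here by splitting on whether the prefix contains a non-spoke arc: in that subcase it reroutes through $v_{a-4}u_{a-4}u_{a-3}$, which is strictly shorter, and only in the remaining subcase does it use the equal-length, turn-reducing move. Since you declare the other junction patterns ``handled in the same spirit,'' this unaddressed phenomenon --- an orientation-reversing local move can export the turn upstream or downstream rather than remove it --- undermines the whole case analysis.

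A smaller but still real issue is the spoke-reduction step, which you label routine. Consolidating two inner sections separated by an outer section must yield a path ending at the same vertex on the same level (which is why the paper assumes at least three spokes, so a trailing spoke is available), and the contradiction needs a \emph{strict} length decrease; this is precisely the computation $\ell_p-\ell_{p'}\ge 4/3$ in the paper, driven by the fact that at most $r\le 2$ outer steps fix the residue while the discarded middle outer section costs $i\ge 1$ steps plus two spokes. Asserting that ``no consolidation increases the length'' without this arithmetic leaves the at-most-two-spokes half of \cref{thm:undeviating} unproved.
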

\begin{proof}
Without loss of generality we can suppose that $x\in\{u_0,v_0\}$ and $y=w_t$,
where $w\in\{u,v\}$ and $0\le t\le n-1$.
Let $p$ be a path from $x$ to $y$ of length 
$\ell_p = d(x,y)$,
with the minimum number of turns.
If there is no undeviating path from $x$ to $y$,
then $p$ has at least one turn.
We will show that this is impossible by contradiction.
Let $qq'$ be the first turn in $p$, 
where $q$ and $q'$ are sections of $p$.

\noindent\emph{Case 1. $q$ is a clockwise outer section.}
Then we can suppose that $q=u_{j-s}u_{j-s+1}\dotsm u_j$ 
for some $s\ge 1$ and $p=\alpha q v_j \beta$,
where $\alpha$ is a clockwise subpath and $\beta$ is a path starting from $v_{j-3}$.

If $s\ge 2$, then the path
\[
p'=\begin{cases}
\alpha u_{j-s} u_{j-s+1}\dotsm u_{j-3} \beta,&\text{if $s\ge 3$}\\
\alpha u_{j-3}\beta,&\text{if $s=2$}
\end{cases}
\]
is shorter than $p$, contradicting the choice of $p$.
When $s=1$, the path $p$ reduces to $p=\alpha u_{j-1}u_j v_j \beta$.
\begin{itemize}
\item
If $\alpha$ contains a non-spoke arc, 
then $\alpha=\alpha' v_{j-4}v_{j-1}$ for some clockwise path $\alpha'$,
and the path $\alpha' v_{j-4}u_{j-4}u_{j-3}\beta$ is shorter than $p$,
the same contradiction.
\item
If $\alpha$ has no non-spoke arcs,
then the path $\alpha u_{j-1} u_{j-2} u_{j-3} \beta$ has
the same length as $p$
and a less number of turns than~$p$.
\end{itemize}
This proves that Case 1 is impossible.

\noindent\emph{Case 2. $q$ is a counterclockwise outer section.}
We define a reflection $f$ on the vertex set of $P(n,k)$ by $f(w_t)=w_{-t}$,
where $w\in\{u,v\}$. It extends naturally to act on paths.
The path $f(p)$, which is from $f(x)$ to $f(y)$,
has length $\ell_{f(p)}=d(f(x),\, f(y))$ and the minimum number of turns.
Since $f(q)f(q')$ is the first turn in $f(p)$, and $f(q)$ is a clockwise outer section, we know that Case 2 is impossible by the impossibility of Case 1.

\noindent\emph{Case 3. $q'$ is a counterclockwise outer section.}
Then we can suppose that $q'=u_j u_{j-1} \dotsm u_{j-s}$ and
$p=\alpha v_j q' \beta$,
where $s\ge 1$, $\alpha$ is a clockwise subpath ending at $v_{j-3}$,
and $\beta$ is a subpath.

If $s\ge 2$, then the path 
\[
p'=\begin{cases}
\alpha u_{j-3}u_{j-4}\dotsm u_{j-s}\beta,&\text{if $s\ge 3$}\\
\alpha u_{j-3}u_{j-2}\beta,&\text{if $s=2$}
\end{cases}
\]
is shorter than $p$, contradicting the choice of $p$.
When $s=1$, $p$ reduces to $p=\alpha v_j u_j u_{j-1} \beta$.
\begin{itemize}
\item
If $\beta$ contains no non-spoke arcs, 
then the path $\alpha u_{j-3}u_{j-2}u_{j-1}\beta$
has the same length as $p$ and a less number of turns than $p$.
\item
If $\beta$ contains a non-spoke arc,
then $\beta=v_{j-1}\beta'$ for some path $\beta'$ starting from $v_{j-4}$ or from $v_{j+2}$.
In the former case, the path $\alpha u_{j-3}u_{j-4}\beta'$
is shorter than $p$; in the latter case, the path $\alpha v_j u_j u_{j+1} u_{j+2} \beta'$ has the same length as $p$ and a less number of turns than $p$.
\end{itemize}
This proves that Case 3 is impossible.

\noindent\emph{Case 4. $q'$ is a clockwise outer section.}
Since $f(q)f(q')$ is the first turn in the path $f(p)$, and $f(q')$ is a counterclockwise outer section, we know that Case 4 is impossible by the impossibility of Case 3.

This proves the existence of an undeviating path from $x$ to $y$ of length $d(x,y)$.
Let $p$ be such a path.
It remains to show that the number of spokes in $p$ is at most 2. By symmetry, we can suppose that $p$ is clockwise. Assume that $p$ has at least 3 spokes.

If $x=v_0$, then there exists $h\ge 0$ and $i,j\ge 1$ such that
\[
p=v_0v_3\dotsm v_{3h} 
u_{3h} u_{3h+1} \dotsm u_{3h+i}
v_{3h+i} v_{3h+i+3} \dotsm v_{3h+i+3j}
\beta,
\]
where $\beta$ is a clockwise path starting from $u_{3h+i+3j}$. 
Then
\[
\ell_p=h+1+i+1+j+1+\ell_\beta=h+i+j+\ell_\beta+3.
\]
Suppose that $3h+i+3j=3q+r$, where $q\ge 1$ and $r\in\{0,1,2\}$. 
The path 
\[
p'=v_0 v_3 \dotsm v_{3q} u_{3q} u_{3q+1} \dotsm u_{3q+r-1}\beta.
\]
has length $\ell_{p'}=q+r+1+\ell_\beta$, and
\[
\ell_p-\ell_{p'}
=2+(i-r)+(h+j-q)
=2\brk2{\frac{i}{3}+1-\frac{r}{3}}
\ge 2\brk2{\frac{1}{3}+1-\frac{2}{3}}
=\frac{4}{3},
\]
contradicting the choice of $p$. 

Otherwise $x=u_0$. Then there exists $h\ge 0$ and $i,j\ge 1$ such that
\[
p=u_0u_1\dotsm u_h v_h v_{h+3} \dotsm v_{h+3i} u_{h+3i} u_{h+3i+1} \dotsm u_{h+3i+j}\beta,
\]
where $\beta$ is a clockwise path starting from $v_{h+3i+j}$.
Then
\[
\ell_p=h+1+i+1+j+1+\ell_\beta=h+i+j+\ell_\beta+3.
\]
Suppose that $h+3i+j=3q+r$, where $q\ge 1$ and $r\in\{0,1,2\}$. 
The path 
\[
p'=v_0 v_3 \dotsm v_{3q} u_{3q} u_{3q+1} \dotsm u_{3q+r-1}\beta.
\]
has length $\ell_{p'}=q+r+1+\ell_\beta$, and
\[
\ell_p-\ell_{p'}=2+(i-q)+(h+j-r)
=2\brk2{\frac{h+j}{3}+1-\frac{r}{3}}
\ge 2\brk2{\frac{1}{3}+1-\frac{2}{3}}=\frac{4}{3},
\]
contradicting the choice of $p$. This completes the proof.
\end{proof}

Now we enter Step 2.
Let $x$ be a vertex in the graph $P(n,3)$
and let $y$ be a vertex or an edge in $P(n,3)$.
Denote by $\mathcal{P}(x,y)$ the set of shortest undeviating paths 
from $x$ to $y$.
Let $\mathcal{P}_-(x,y)$ be 
the set of clockwise paths in $\mathcal{P}(x,y)$,
and $\ell_-(x,y)$ the length of any path in $\mathcal{P}_-(x,y)$,
called the \emph{clockwise distance} from $x$ to $y$.
Let $\mathcal{P}_+(x,y)$ be 
the set of counterclockwise paths in $\mathcal{P}(x,y)$,
and $\ell_+(x,y)$ the length of any path in $\mathcal{P}_+(x,y)$,
called the \emph{counterclockwise distance} from $x$ to $y$.

\begin{lemma}\label{lem:dist:min}
Let $3\le i\le n-3$. Then we have the following in the graph $P(n,3)$.
\begin{enumerate}
\item\label[itm]{itm:l:uu}
$\ell_-(u_0,u_i)=q_i+r_i+2$ and $\ell_+(u_0,u_i)=q_n-q_i+r+2$.
\item\label[itm]{itm:l:uv}
$\ell_-(u_0,v_i)=q_i+r_i+1$
and $\ell_+(u_0,v_i)=q_n-q_i+r+1$.
Furthermore, $\abs{P_-(u_0,v_i)}=\abs{P_+(u_0,v_i)}=1$.
\item\label[itm]{itm:l:vv}
The clockwise and counterclockwise distances from $v_0$ to $v_i$ are respectively
\[
\ell_-(v_0,v_i)=\begin{cases}
q_i,&\text{if $r_i=0$}\\
q_i+r_i+2,&\text{if $r_i\ne 0$}
\end{cases}
\quad\text{and}\quad
\ell_+(v_0,v_i)=\begin{cases}
q_{n-i},&\text{if $r_n=r_i$}\\
q_{n-i}+r_{n-i}+2,&\text{if $r_n\ne r_i$}.
\end{cases}
\]
\end{enumerate}
%\begin{align*}
%d(v_0,v_i)&=\begin{cases}
%\min(q_i,\,q_n-q_i),&\text{if $r_i=0$ and $r_n=0$},\\
%\min(q_i,\,q_n-q_i+r_n+2),&\text{if $r_i=0$ and $r_n\ne 0$},\\
%\min(q_i+r_i+2,\,q_n-q_i),&\text{if $r_i\ne 0$ and $r_n=r_i$},\\
%\min(q_i+r_i+2,\,q_n-q_i+r+2),
%&\text{if $r_i\ne 0$ and $r_n\ne r_i$}.
%\end{cases}
%\end{align*}
\end{lemma}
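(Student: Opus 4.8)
The plan is to use \cref{thm:undeviating} to reduce each computation to a search over undeviating paths that carry at most two spokes, and then to deduce every counterclockwise quantity from its clockwise counterpart through the reflection $f(w_t)=w_{-t}$ used in the proof of \cref{thm:undeviating}. Concretely, I would first establish the three clockwise formulas by a direct optimization, and then read off the counterclockwise formulas (and the uniqueness claim in \cref{itm:l:uv}) by symmetry.

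For the clockwise distances the crucial bookkeeping is that, along a clockwise undeviating path, each outer edge advances the index by $1$, each inner edge advances it by $3$, and each spoke advances it by $0$ while costing $1$ in length. Hence a clockwise path from $x$ to $y$ using $p$ inner edges, $m$ outer edges and $\sigma$ spokes has length $p+m+\sigma$ and total index advance $3p+m=i$; eliminating $m$ rewrites the length as $i-2p+\sigma$. Minimizing the length therefore amounts to \emph{maximizing the inner-edge count} $p$ subject to $m=i-3p\ge0$ and to the parity and residue constraints on $\sigma$ forced by the endpoints.

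These constraints resolve the three cases. From $u_0$ to $u_i$ the spoke count $\sigma$ is even and at most $2$, so $\sigma\in\{0,2\}$; the choice $\sigma=2$ permits $p=q_i$ and gives length $i-2q_i+2=q_i+r_i+2$, which beats the pure outer path of length $i$ exactly when $i\ge3$ (the symmetric requirement $n-i\ge3$ governs the counterclockwise side, and together they explain the hypothesis $3\le i\le n-3$). From $u_0$ to $v_i$ the count $\sigma$ is odd, hence $\sigma=1$; the maximal choice $p=q_i$ forces the unique spoke at $u_{r_i}$ and pins down the outer and inner sections completely, yielding both the length $q_i+r_i+1$ and the uniqueness $\abs{\mathcal P_-(u_0,v_i)}=1$. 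From $v_0$ to $v_i$ the count $\sigma$ is again even: if $r_i=0$ the pure inner path of length $q_i$ is optimal, while if $r_i\ne0$ a residue obstruction between the two spokes forces at least $r_i$ outer edges, so the best $\sigma=2$ path has length $q_i+r_i+2$.

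Finally I would transport these results to the counterclockwise direction. Since $f$ is a graph automorphism that fixes $u_0$ and $v_0$ and interchanges the clockwise and counterclockwise senses, it restricts to a bijection between $\mathcal P_+(p_0,w_i)$ and $\mathcal P_-(p_0,w_{n-i})$ for $p_0\in\{u_0,v_0\}$; this length-preserving correspondence, an instance of the symmetry recorded in \cref{lem:dist:symm}, gives $\ell_+(p_0,w_i)=\ell_-(p_0,w_{n-i})$, so the $\ell_+$ formulas and the uniqueness $\abs{\mathcal P_+(u_0,v_i)}=1$ follow by substituting $n-i$ into the clockwise formulas. The one delicate point is the arithmetic of converting $q_{n-i}$ and $r_{n-i}$ into $q_n-q_i$ and $r=\abs{r_n-r_i}$ in \cref{itm:l:uu,itm:l:uv}: when $r_n<r_i$ a borrow occurs in the base-$3$ subtraction, and the exceptional convention $r=0$ for $(r_n,r_i)=(0,2)$ is exactly what keeps the closed forms valid. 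I expect this residue bookkeeping, rather than any geometric subtlety, to be the main obstacle to a clean write-up.
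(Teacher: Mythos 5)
Your proposal is correct and follows essentially the same route as the paper: both invoke \cref{thm:undeviating} to restrict to undeviating paths with at most two spokes, determine the spoke count by the parity forced by the endpoints, compute the resulting lengths directly (with the unique optimal path in \cref{itm:l:uv} pinned down by placing the single spoke after $r_i$ outer steps), and obtain the counterclockwise formulas by the reflection symmetry together with the arithmetic identity for $(q_{n-i},r_{n-i})$. Your ``maximize the inner-edge count $p$ subject to $m=i-3p\ge0$'' formulation is just a slightly more systematic packaging of the same case analysis.
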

\begin{proof}
It is elementary to compute that 
\begin{align}
(q_{n-i},\,r_{n-i})
&=\begin{cases}
(q_n-q_i,\,r_n-r_i),&\text{if $r_n\ge r_i$}\\
(q_n-q_i-1,\,1),&\text{if $(r_n,\,r_i)=(0,2)$}\\
(q_n-q_i-1,\,2),&\text{otherwise}
\end{cases}\label{eq:qr:n-i}
%\\
%&=\begin{cases}
%(q_n-q_i,\,r),&\text{if $r_n\ge r_i$}\\
%(q_n-q_i-1,\,r+1),&\text{if $r_n<r_i$}.
%\end{cases}
\end{align}

\noindent\cref{itm:l:uu}
Consider $p\in P_-(u_0,u_i)$.
By \cref{thm:undeviating},
the path $p$ has either 0 or 2 spokes.
In the former case, $\ell_p=i$;
in the latter case, $\ell_p=q_i+r_i+2$
for $p$ has $q_i$ steps on an inner cycle between the two spokes
and $r_i$ steps on the outer cycle.
Since $i\ge 3$, we can deduce that 
\[
\ell_-(u_0,u_i)=\min(i,\,q_i+r_i+2)=q_i+r_i+2.
\]
By symmetry and by \cref{eq:qr:n-i}, $\ell_+(u_0,u_i)=q_{n-i}+r_{n-i}+2=q_n-q_i+r+2$.

\noindent\cref{itm:l:uv}
Consider $p\in P_-(u_0,v_i)$.
By \cref{thm:undeviating},
the path $p$ has exactly one spoke.
Then 
\[
p=u_0u_1\dotsm u_{r_i}v_{r_i}v_{r_i+3}\dotsm v_i
\]
is unique,
with length $\ell_p=q_i+r_i+1$.
Indeed, the unique spoke in $p$ must be immediately after 
the first $r_i$ steps on the outer cycle.
By symmetry, $\abs{P_+(u_0,v_i)}=1$. By \cref{eq:qr:n-i}, 
\[
\ell_+(u_0,v_i)
=q_{n-i}+r_{n-i}+1
=q_n-q_i+r+1.
\]

\noindent\cref{itm:l:vv}
Consider $p\in P_-(v_0,v_i)$.
By \cref{thm:undeviating},
the path $p$ has either 0 or 2 spokes.
If $r_i=0$, then $p=v_0v_3\dotsm v_i$ is unique and $\ell_p=q_i$.
Otherwise $r_n\in\{1,2\}$.
If $p$ has no spokes, then $\ell_p=i$;
if $p$ has exactly two spokes, then $\ell_p=q_i+r_i+2$
for $p$ has $q_i$ steps on an inner cycle between the two spokes
and $r_i$ steps on the outer cycle. This proves the first desired formula.
It is direct to obtain the other formula by symmetry.
\end{proof}

Note that $\ell_\pm(u_0,u_i)=\ell_\pm(u_0,v_i)+1$,
and the path obtained by adding the arc $v_iu_i$ to the unique path 
in $P_\pm(u_0,v_i)$ belongs to $P_\pm(u_0,u_i)$.
For $n\in\mathbb{Z}$,
define $h\in\mathbb{Z}$ and $r_n'\in\{0,1,\dots,5\}$ by $n=6h+r_n'$.
Define $M_n=\max\{j<\floor{n/2}\colon r_j=r_n\}$.

\begin{theorem}\label{thm:dist:vv:i<n/2}
Let $0\le i\le \floor{n/2}$. 
Then we have the following in the graph $P(n,3)$.
\begin{align*}
d(u_0,u_i)&=\begin{cases}
q_i+r_i,&\text{if $i\le 2$}\\
q_i+r_i+1,&\text{if $(r_n',\,i)=(5,\,\floor{n/2})$}\\
q_i+r_i+2,&\text{otherwise},
\end{cases}\\
d(u_0,v_i)&=\begin{cases}
q_i+r_i,&\text{if $(r_n',\,i)=(5,\,\floor{n/2})$}\\
q_i+r_i+1,&\text{otherwise},
\end{cases}\\
d(v_0,v_i)&=\begin{cases}
q_i+r_i-1,&\text{if $(r_n',i)=(5,\floor{n/2})$}\\
q_i+r_i,&\text{if $r_n'\in\{2,4\}$ and $i=M_n$,
or $r_i=0$}\\
q_i+r_i+1,&\text{if $r_n'\in\{1,5\}$ and $i=M_n$}\\
q_i+r_i+2,&\text{otherwise}.
\end{cases}
\end{align*}
\end{theorem}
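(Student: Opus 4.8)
The plan is to reduce every distance to a minimum of a \emph{clockwise} and a \emph{counterclockwise} contribution and then to decide, by a short boundary analysis, which of the two wins. By \cref{thm:undeviating}, any two vertices are joined by a shortest path that is undeviating, hence either clockwise or counterclockwise; consequently
\[
d(x,y)=\min\brk1{\ell_-(x,y),\,\ell_+(x,y)}.
\]
\Cref{lem:dist:min} supplies $\ell_-$ and $\ell_+$ for $3\le i\le n-3$, so in the range $0\le i\le\floor{n/2}$ only $i\in\{0,1,2\}$ fall outside its scope; these I dispose of directly by exhibiting the obvious short paths (an outer arc giving $d(u_0,u_i)=i$, an outer arc followed by one spoke giving $d(u_0,v_i)=i+1$, and a two-spoke detour for $d(v_0,v_i)$), which match the claimed values. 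For $3\le i\le\floor{n/2}$ I substitute \cref{eq:qr:n-i} into the counterclockwise lengths so that every quantity is expressed through $q_i,r_i,q_n,r_n$ alone.

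For $d(u_0,u_i)$ and $d(u_0,v_i)$ the two lengths differ only by the additive constant $1$ contributed by the terminal spoke (recall $\ell_\pm(u_0,u_i)=\ell_\pm(u_0,v_i)+1$), so the minimizing direction is identical and it suffices to analyse one of them. Writing
\[
\ell_+(u_0,u_i)-\ell_-(u_0,u_i)=q_n-2q_i+r-r_i,
\]
I note that once the residue $r_i$ (hence $r$) is fixed this difference is affine in $q_i$ with slope $-2$; therefore, over the indices $i\le\floor{n/2}$ of a given residue class, it is smallest at the largest admissible $i$, which lies in $\{\floor{n/2},\floor{n/2}-1,\floor{n/2}-2\}$. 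Evaluating these finitely many boundary indices across the six values of $r_n'$ shows that $\ell_-\le\ell_+$ in every case except $(r_n',i)=(5,\floor{n/2})$, where $\ell_+=\ell_--1$. This gives $d(u_0,u_i)=\ell_-=q_i+r_i+2$ and $d(u_0,v_i)=q_i+r_i+1$ throughout, with the single exceptional value at $(5,\floor{n/2})$.

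The distance $d(v_0,v_i)$ is the delicate one, because here the counterclockwise length collapses to $\ell_+(v_0,v_i)=q_{n-i}$ exactly when $r_i=r_n$ --- that is, when $v_0$ and $v_i$ already lie on a common inner cycle and no spoke is needed --- so near the antipode the counterclockwise route can be far shorter and the minimizing direction flips. The index governing this flip is $M_n$, the largest $j<\floor{n/2}$ with $r_j=r_n$. I split into the residues $r_i\in\{0,1,2\}$ and, through \cref{eq:qr:n-i}, the residues $r_n$, and in each cell compare $\ell_-(v_0,v_i)$ (which equals $q_i$ when $r_i=0$ and $q_i+r_i+2$ otherwise) with $\ell_+(v_0,v_i)$. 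When $r_n=0$ the index $M_n$ itself satisfies $r_i=0$ and is absorbed into the $r_i=0$ branch, which explains why only $r_n'\in\{1,2,4,5\}$ produce an explicit $M_n$ sub-case; a further separate entry arises at $i=\floor{n/2}$ precisely when $r_n'=5$, since then $\floor{n/2}$ itself has $r_i=r_n$. Matching the surviving minima to $q_i+r_i-1$, $q_i+r_i$, $q_i+r_i+1$ and $q_i+r_i+2$ reproduces the four listed branches.

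The main obstacle is the boundary bookkeeping near $i=\floor{n/2}$ and $i=M_n$, where $\ell_-$ and $\ell_+$ differ by at most $1$ and the minimum switches: one must verify that these two critical indices neither coincide nor leave an unexamined gap, and must track how the parity of $n$ (entering through $\floor{n/2}$) interacts with $r_n$. This interaction is exactly why the statement is organised modulo $6$ rather than modulo $3$. Away from these boundaries, the affine-in-$q_i$ structure of $\ell_+-\ell_-$ forces $\ell_-\le\ell_+$ uniformly, so the rest is routine arithmetic carried out residue class by residue class.
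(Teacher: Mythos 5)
Your proposal is correct and follows essentially the same route as the paper: reduce $d(x,y)$ to $\min(\ell_-,\ell_+)$ via \cref{thm:undeviating}, feed in \cref{lem:dist:min} and \cref{eq:qr:n-i}, and locate the indices where the counterclockwise route wins, which is exactly how the paper isolates the exceptional cases $(r_n',i)=(5,\floor{n/2})$ and $i=M_n$. Your monotonicity observation (that $\ell_+-\ell_-$ is affine in $q_i$ with slope $-2$, so only boundary indices need checking) is a slightly cleaner packaging of the paper's direct inequality analysis, but the substance is the same.
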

\begin{proof}
Let $0\le i\le \floor{n/2}$.
We show them individually.

Consider $d(u_0,u_i)$.
If $i\le 2$, it is easy to check that $d=q_i+r_i$.
Let $3\le i\le \floor{n/2}$.
Suppose that $\ell_+(u_0,u_i)<\ell_-(u_0,u_i)$.
By \cref{lem:dist:min}, $q_n-q_i+r+2<q_i+r_i+2$.
It is elementary to show that $(r_n',\,i)=(5,\,\floor{n/2})$.
In this case, $(q_n,r_n,q_i,r_i,r)=(2h+1,\,2,h,2,0)$ and
\[
d(u_0,u_i)
=q_n-q_i+r+2
=2h+1-h+0+2
=q_i+r_i+1.
\]
Since $r_{n-i}=0$, the set $P$ consists of a unique path, 
which has no edge in the outer cycle.

Consider $d(u_0,v_i)$.
Suppose that $\ell_+(u_0,v_i)<\ell_-(u_0,v_i)$.
By \cref{lem:dist:min} and the previous case, 
we find
$(r_n',i)=(5,\floor{n/2})$
and $d(u_0,u_i)=q_i+r_i$.

Consider $d=d(v_0,v_i)$. Suppose that $\ell_+(v_0,v_i)<\ell_-(v_0,v_i)$.
By \cref{lem:dist:min},
it is elementary to show that $r_n=r_i\ne 0$, $d=q_{n-i}$,
and
\begin{equation}\label[ineq]{pf:ineq:vv}
q_n-q_i\le q_i+r_i+1.
\end{equation}
We proceed according to the value of $r_n'$.
\begin{itemize}
\item
If $n=6h+1$, then $i\le 3h-2$ since $r_i=1$.
By \cref{pf:ineq:vv}, we find $i=3h-2=M_n$ and $d=h+1=q_i+r_i+1$.
\item
If $n=6h+2$, then $i\le 3h-1$ since $r_i=2$.
By \cref{pf:ineq:vv}, we find $i=M_n$ and $d=q_i+r_i$.
\item
If $n=6h+4$, then $i\le 3h+1$ since $r_i=1$.
By \cref{pf:ineq:vv}, we find $i=M_n$ and $d=q_i+r_i$.
\item
If $n=6h+5$, then $i\le 3h+2$ since $r_i=2$.
By \cref{pf:ineq:vv}, we find that either $i=3h+2=\floor{n/2}$ 
or $i=3h-1=M_n$. In the former case, $d=h+1=q_i+r_i-1$;
in the latter case, $d=h+2=q_i+r_i+1$.
\end{itemize}
This completes the proof.
\end{proof}

Now we start Step 3.

\begin{proposition}\label{prop:qr<qr}
Let $i,j\in\mathbb{Z}$ and $j>i$. 
We have the following equivalence:
\[
q_{j}+r_{j}<q_i+r_i
\iff
j=i+1\quad\text{and}\quad r_i=2.
\]
In this case, $q_{i+1}+r_{i+1}+1=q_i+r_i$.
\end{proposition}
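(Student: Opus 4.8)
The plan is to collapse everything onto the single quantity $f(n)=q_n+r_n$ and exploit its closed form $f(n)=(n+2r_n)/3$, which follows immediately from $n=3q_n+r_n$. Once this is in hand, the entire statement reduces to a short computation with the residues, and the equivalence drops out of a combined size-and-congruence constraint.

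First I would record the difference formula. Writing $f(n)=(n+2r_n)/3$, for any $i,j\in\mathbb{Z}$ one has
\[
f(j)-f(i)=\frac{(j-i)+2(r_j-r_i)}{3},
\]
so that the inequality $f(j)<f(i)$ is equivalent to $j-i<2(r_i-r_j)$. Since $j>i$ forces $j-i\ge 1$, this already requires $r_i>r_j$; and because $r_i\le 2$ while $r_j\ge 0$, the right-hand side is at most $4$, whence $1\le j-i\le 3$. Thus only finitely many candidates remain, indexed by the admissible residue pairs $(r_i,r_j)\in\{(1,0),(2,0),(2,1)\}$, these being exactly the pairs with $r_i>r_j$.

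The second, and decisive, step is to bring in the congruence $j-i\equiv r_j-r_i\pmod 3$, valid since $i\equiv r_i$ and $j\equiv r_j$ modulo $3$. Running through the three pairs, the inequality $j-i<2(r_i-r_j)$ restricts the \emph{magnitude} of $j-i$, while the congruence fixes its \emph{residue} modulo $3$. For $(1,0)$ and $(2,1)$ the inequality forces $j-i=1$, yet in both cases the congruence demands $j-i\equiv 2\pmod 3$, a contradiction; only the pair $(2,0)$ survives, where the inequality allows $j-i\in\{1,2,3\}$ and the congruence $j-i\equiv 1\pmod 3$ singles out $j-i=1$. This yields $j=i+1$ and $r_i=2$, exactly as claimed.

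Finally, for the converse direction and the last assertion, I would substitute $r_i=2$ and $r_j=r_{i+1}=0$ into the difference formula to obtain $f(i+1)-f(i)=(1-4)/3=-1$, which simultaneously confirms $f(i+1)<f(i)$ and gives $q_{i+1}+r_{i+1}+1=q_i+r_i$. The only genuine subtlety lies in the second step: the size bound by itself would still permit $j-i=1$ for the pairs $(1,0)$ and $(2,1)$, and it is precisely the modular congruence that eliminates these, leaving a single surviving case. Everything else is routine arithmetic with the residues.
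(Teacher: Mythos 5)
Your proof is correct. The paper itself omits the argument entirely (``It is elementary and we omit the proof''), so there is no authorial proof to compare against; your reduction to $f(n)=q_n+r_n=(n+2r_n)/3$, the size bound $j-i<2(r_i-r_j)$, and the congruence $j-i\equiv r_j-r_i\pmod 3$ together give a clean and complete verification of both directions and of the final identity $q_{i+1}+r_{i+1}+1=q_i+r_i$. You are also right that the congruence step is the non-trivial ingredient: the size bound alone does not eliminate the residue pairs $(1,0)$ and $(2,1)$.
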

\begin{proof}
It is elementary and we omit the proof.
\end{proof}

\begin{lemma}\label{lem:eu:i<n/2}
Let $0\le i\le \ceil{n/2}-1$.
Then the following equivalences hold:
\begin{align*}
d(u_0,u_{i+1})<d(u_0,u_i)
&\iff
i\in\{5,\,8,\,11,\,\dots,\,3\floor{n/6}-1\},\\
d(v_0,u_{i+1})<d(v_0,u_i)
&\iff
i\in\{2,\,5,\,8,\,\dots,\,3\floor{n/6}-1\}.
\end{align*}
\end{lemma}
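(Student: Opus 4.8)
The plan is to read both one-step differences directly off \cref{thm:dist:vv:i<n/2}, reduce each equivalence to the elementary monotonicity statement of \cref{prop:qr<qr}, and then clean up the two ends of the range by hand. First I would use \cref{lem:dist:symm} to rewrite $d(v_0,u_i)=d(u_0,v_i)$, so that the second equivalence becomes $d(u_0,v_{i+1})<d(u_0,v_i)\iff i\in\{2,5,\dots,3\floor{n/6}-1\}$; both equivalences then concern consecutive values of the functions $i\mapsto d(u_0,u_i)$ and $i\mapsto d(u_0,v_i)$ supplied by \cref{thm:dist:vv:i<n/2}, with arguments in $\{0,1,\dots,\ceil{n/2}\}$.

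The core of the argument is the \emph{interior} range, where both consecutive distances come from the generic branch $q_\bullet+r_\bullet+c$ of \cref{thm:dist:vv:i<n/2}, with $c=2$ for the outer pair $d(u_0,u_\bullet)$ and $c=1$ for the spoke pair $d(u_0,v_\bullet)$. There the constant $c$ cancels, so for $w\in\{u,v\}$ the strict inequality $d(u_0,w_{i+1})<d(u_0,w_i)$ is equivalent to $q_{i+1}+r_{i+1}<q_i+r_i$; by \cref{prop:qr<qr}, applied with $j=i+1$, this holds if and only if $r_i=2$. This single computation produces the arithmetic progressions with common difference $3$ in both lines of the statement, and I would record it as the main step.

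The two equivalences then differ only through their endpoints, which must be handled separately. At the lower end, $i\mapsto d(u_0,u_i)$ has the anomalous branch $q_i+r_i$ for $i\le 2$, which makes the step from $i=2$ to $i=3$ an increase rather than a decrease; hence the first progression cannot begin at $2$ and instead starts at $5$. The spoke distance $d(u_0,v_i)$ has no such anomaly, its generic branch $q_i+r_i+1$ holding for all $0\le i\le\floor{n/2}$ off the single exceptional index, so there the decrease at $i=2$ survives and the progression starts at $2$; I would verify $i\in\{0,1,2\}$ directly to pin this down. At the upper end I would dispose of the indices with $i+1$ near $\floor{n/2}$ using the exceptional value $(r_n',i)=(5,\floor{n/2})$ of \cref{thm:dist:vv:i<n/2} together with the reflection symmetry $d(u_0,w_{i+1})=d(u_0,w_{n-i-1})$ of \cref{lem:dist:symm}: the step that crosses the antipode is flat, which both excludes $i=\floor{n/2}$ when $n$ is odd and, in the residue $r_n'=5$, neutralizes the would-be decrease at $i=\floor{n/2}$ despite $r_{\floor{n/2}}=2$. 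Combined with the interior count, this fixes the common endpoint of the two progressions at $3\floor{n/6}-1$.

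I expect the endpoint bookkeeping to be the only real obstacle. Writing $n=6h+r_n'$, the precise locations of $\floor{n/2}$, of the exceptional index, and of the last admissible $i\equiv 2\pmod 3$ all shift with $r_n'$ and with the parity of $n$, so a short case analysis over the six residues is needed to confirm that each progression terminates exactly at $3h-1=3\floor{n/6}-1$ and that no spurious decrease hides among the boundary indices $3h,3h+1,3h+2$. The interior computation, by contrast, is uniform and immediate from \cref{prop:qr<qr}.
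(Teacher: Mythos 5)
Your proposal is correct and follows essentially the same route as the paper: both read the consecutive differences off \cref{thm:dist:vv:i<n/2}, reduce the generic (interior) step to the criterion $r_i=2$ via \cref{prop:qr<qr}, and then separately dispose of the boundary anomalies --- the $i\le 2$ branch of $d(u_0,u_i)$ at the lower end, and the $(r_n',i)=(5,\floor{n/2})$ exception together with the reflection symmetry of \cref{lem:dist:symm} at the upper end. The paper organizes this as a proof by contradiction over four (resp.\ three) cases, but the mathematical content matches yours step for step.
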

\begin{proof}
We show the two equivalences individually.

Let $d_1=d(u_0,u_i)$ and $d_2=d(u_0,u_{i+1})$.
Suppose that $d_2<d_1$. We proceed by contradiction.

\noindent\emph{Case 1. $i\le 2$.}
Then $(d_1,d_2)=(i,\,i+1)$, contradicting the premise $d_2<d_1$.

\noindent\emph{Case 2. $r_n'=5$ and $i=\floor{n/2}$.}
Then $n$ is odd and $d_2=d(u_0,u_{n-i-1})=d_1$.

\noindent\emph{Case 3. $(r_n',i)\ne (5,\,\floor{n/2}-1)$}.
Then $(n,i)=(6h+5,\,3h+1)$.
By \cref{thm:dist:vv:i<n/2}, $d_1=h+3=d_2$.

\noindent\emph{Case 4. None of the above.}
By \cref{thm:dist:vv:i<n/2}, the premise $d_2<d_1$ reduces to the inequaltiy in \cref{prop:qr<qr}, which is equivalent to $r_i=2$; in this case, $d_2=d_1-1$
by \cref{prop:qr<qr}.

Rearranging the above results, we obtain the first desired equivalence.

Now, let $d_1=d(v_0,u_i)$
and $d_2=d(v_0,u_{i+1})$. 
Suppose that $d_2<d_1$. We treat 3 cases.

\noindent\emph{Case 1. $i=\ceil{n/2}-1$.}
If $n$ is odd, then $d_1=d_2$ by symmetry, a contradiction.
Suppose that $n$ is even.
By \cref{thm:dist:vv:i<n/2}, the assumption $d_2<d_1$ reduces to 
the inquality in \cref{prop:qr<qr},
which is equivalent to $r_i=2$.
Since $i=n/2-1$, we find $r_n'=0$.
%In this case, $d_2=q_{i+1}+r_{i+1}+1=q_i+r_i$.

\noindent\emph{Case 2. $(r_n',i)=(5,\,\ceil{n/2}-2)$.}
Then $i=3h+1$ and $d_1=h+2=d_2$, a contradiction.

\noindent\emph{Case 3. None of the above.}
Since $i+1\le \floor{n/2}$, by \cref{thm:dist:vv:i<n/2,prop:qr<qr},
the assumption $d_2<d_1$ reduces to $r_i=2$.
%As a result, $d_2=q_{i+1}+r_{i+1}+1=q_i+r_i$.

Rearranging the above results, we obtain the second desired equivalence.
\end{proof}

\begin{lemma}\label{lem:es:i<n/2}
Let $0\le i\le \floor{n/2}$. 
Then the following equavilences hold:
\begin{align*}
d(u_0,u_i)<d(u_0,v_i)
&\iff
i\in\{0,1,2\}, \\
d(v_0,v_i)<d(v_0,u_i)
&\iff
r_i=0
\text{ or }
(r_n',i)\in\{(5,\floor{n/2}),\,(2,M_n),\,(4,M_n)\}.
\end{align*}
\end{lemma}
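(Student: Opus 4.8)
The plan is to read all three distances off \cref{thm:dist:vv:i<n/2} and reduce each inequality to a comparison of the additive correction terms, since every relevant distance has the shape $q_i+r_i$ plus a small offset. For the first equivalence I would record that the offset of $d(u_0,u_i)$ over $q_i+r_i$ is $0$, $1$, or $2$ according as $i\le 2$, $(r_n',i)=(5,\floor{n/2})$, or neither, while the offset of $d(u_0,v_i)$ is $0$ on $(r_n',i)=(5,\floor{n/2})$ and $1$ otherwise. Thus $d(u_0,u_i)<d(u_0,v_i)$ holds exactly when the offset of $u_i$ is strictly smaller than that of $v_i$. Running through the three regimes, this happens only for $i\le 2$ (offsets $0$ versus $1$); in the regime $(r_n',i)=(5,\floor{n/2})$ the offsets are $1$ versus $0$ and in the generic regime they are $2$ versus $1$, so the inequality fails in both. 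Since $\floor{n/2}>2$ throughout the range of interest, the branch $i\le 2$ is disjoint from $(r_n',i)=(5,\floor{n/2})$, and the first equivalence follows.

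For the second equivalence I would first apply the reflection identity $d(v_0,u_i)=d(u_0,v_i)$ from \cref{lem:dist:symm}, so that the claim becomes a comparison of $d(v_0,v_i)$ with $d(u_0,v_i)$. The offset of $d(u_0,v_i)$ is $0$ on $(r_n',i)=(5,\floor{n/2})$ and $1$ otherwise, while the offset of $d(v_0,v_i)$ reads $-1$, $0$, $+1$, $+2$ off the four branches of its formula in \cref{thm:dist:vv:i<n/2}. I would then compare offsets branch by branch: on $(5,\floor{n/2})$ they are $-1$ versus $0$; on $r_i=0$ or $(r_n',i)\in\{(2,M_n),(4,M_n)\}$ they are $0$ versus $1$; on $(r_n',i)\in\{(1,M_n),(5,M_n)\}$ both equal $1$; and on the generic branch they are $2$ versus $1$. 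Hence $d(v_0,v_i)<d(v_0,u_i)$ holds on exactly the first two families and fails on the last two, which is precisely the stated right-hand side.

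The only genuine bookkeeping, and the step I would treat most carefully, is confirming that the branch conditions of $d(v_0,v_i)$ are mutually exclusive and correctly matched to the residue classes, so that the offset comparisons are unambiguous. I would check that $M_n$ satisfies $r_{M_n}=r_n$, which equals $2$ when $r_n'\in\{2,5\}$ and $1$ when $r_n'\in\{1,4\}$, so that $r_{M_n}\ne 0$; consequently the branch $r_i=0$ never collides with an $i=M_n$ branch. Likewise $\floor{n/2}$ carries residue $2$ when $r_n'=5$, so $(5,\floor{n/2})$ is disjoint from $r_i=0$ as well. With these disjointness checks the case split is clean and the offset comparisons are immediate; I anticipate no deeper obstacle, the argument being a careful but routine walk through the residues $r_n'$ and the indices $i\le 2$, $M_n$, and $\floor{n/2}$.
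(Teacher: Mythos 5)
Your proposal is correct and matches the paper's approach: the paper's proof is literally ``Direct from \cref{thm:dist:vv:i<n/2}'', and your offset-by-offset comparison (together with the symmetry $d(v_0,u_i)=d(u_0,v_i)$ from \cref{lem:dist:symm} and the disjointness checks on the branches) is exactly the intended expansion of that one-line argument.
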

\begin{proof}
Direct from \cref{thm:dist:vv:i<n/2}.
\end{proof}

\begin{lemma}\label{lem:u-ev:i<n/2}
Let $0\le i\le \ceil{n/2}-2$, $d_1=d(u_0,v_i)$ and $d_2=d(u_0,v_{i+3})$.
\begin{enumerate}
\item
If $(r_n',i)=(2,\,n/2-2)$,
then $d_1>d_2=q_i+r_i$.
\item
Otherwise, $d_2\ge d_1=q_i+r_i+1$.
\end{enumerate}
\end{lemma}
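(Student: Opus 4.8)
The plan is to reduce everything to \cref{thm:dist:vv:i<n/2} together with the elementary identity $q_{i+3}+r_{i+3}=q_i+r_i+1$, which holds because increasing an index by $3$ raises the quotient by one and leaves the residue unchanged. I would first pin down $d_1$. The exceptional line $(r_n',i)=(5,\floor{n/2})$ of \cref{thm:dist:vv:i<n/2} cannot trigger for $v_i$: it would require $i=\floor{n/2}$, whereas $r_n'=5$ forces $n$ odd and then $i\le\ceil{n/2}-2=(n-3)/2<(n-1)/2=\floor{n/2}$. Hence $d_1=q_i+r_i+1$ in all cases, which is exactly the value asserted in both parts of the statement.

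Next I would compute $d_2=d(u_0,v_{i+3})$ by splitting on whether the index $i+3$ still lies in the range covered by \cref{thm:dist:vv:i<n/2}. In the generic case $i+3\le\floor{n/2}$ I apply the theorem directly: if $(r_n',i+3)=(5,\floor{n/2})$ then $d_2=q_{i+3}+r_{i+3}=q_i+r_i+1=d_1$, and otherwise $d_2=q_{i+3}+r_{i+3}+1=q_i+r_i+2=d_1+1$; either way $d_2\ge d_1$, giving part (2). Note that the hypothesis $(r_n',i)=(2,n/2-2)$ of part (1) forces $i+3=n/2+1>\floor{n/2}$, so it never falls into this generic case, consistent with part (2) being what we prove here.

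The remaining, and only delicate, case is the wrap-around $i+3>\floor{n/2}$, equivalently $i\ge\floor{n/2}-2$. Combined with $i\le\ceil{n/2}-2$ this leaves just $i=n/2-2$ for even $n$, and $i\in\{(n-5)/2,(n-3)/2\}$ for odd $n$, so I can use the reflection $d(u_0,v_{i+3})=d(u_0,v_{n-i-3})$ of \cref{lem:dist:symm} and re-enter \cref{thm:dist:vv:i<n/2} at the smaller index. For even $n$ this gives $d_2=d(u_0,v_{n/2-1})=q_{i+1}+r_{i+1}+1$, and \cref{prop:qr<qr} tells us that $q_{i+1}+r_{i+1}<q_i+r_i$ precisely when $r_i=2$; since $i=n/2-2$ one has $r_i=2$ if and only if $r_n'=2$, and in that single case $d_2=q_i+r_i=d_1-1$, which is exactly part (1), while for $r_n'\in\{0,4\}$ one gets $d_2=d_1+1$, i.e.\ part (2). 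For odd $n$ the two admissible indices collapse to $d_2=d(u_0,v_{\floor{n/2}})$ and $d_2=d(u_0,v_i)=d_1$ respectively, and a short residue check (invoking the special value when $r_n'=5$) shows $d_2\ge d_1$ throughout.

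The main obstacle I anticipate is bookkeeping rather than conceptual difficulty: one must keep straight two independent sources of exception, namely the boundary value $(5,\floor{n/2})$ built into \cref{thm:dist:vv:i<n/2} and the symmetry fold at $i+3>\floor{n/2}$, and verify that they interact so as to produce part (1) in exactly the one case $(r_n',i)=(2,n/2-2)$ and nowhere else. I would therefore organize the write-up as the three cases above (generic range; even wrap-around; odd wrap-around), reducing each to a one-line application of \cref{prop:qr<qr} or of the identity $q_{i+3}+r_{i+3}=q_i+r_i+1$, so that the finitely many residue sub-checks stay transparent.
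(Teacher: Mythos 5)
Your proposal is correct and follows essentially the same route as the paper: both reduce to \cref{thm:dist:vv:i<n/2} via the reflection $d(u_0,v_{i+3})=d(u_0,v_{n-i-3})$, split into the generic range $i+3\le\floor{n/2}$ versus the boundary indices near $n/2$, and invoke \cref{prop:qr<qr} to isolate $r_i=2$ (hence $r_n'=2$) as the unique source of $d_2<d_1$. The only difference is presentational — the paper argues by contradiction from $d_2<d_1$ while you compute $d_1$ and $d_2$ directly in each case — and your residue checks all come out right.
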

\begin{proof}
Let $0\le i\le \ceil{n/2}-2$ and $j=n-i-3$.
Then $d_2=d(u_0,v_j)$ by symmetry.
Suppose that $d_2<d_1$. We treat 3 cases.

\noindent\emph{Case 1. $i=\ceil{n/2}-2$.}
Then $j=\floor{n/2}-1$.
If $n$ is odd, then $j=i$ and $d_1=d_2$, a contradiction.
Otherwise $n$ is even, then $j=i+1$. By \cref{thm:dist:vv:i<n/2},
the assumption $d_2<d_1$ reduces to the inequaltiy in \cref{prop:qr<qr}, which is equivalent to $r_i=2$.
It follows that $r_n'=2$
and $d_2=h+1=q_i+r_i$.

\noindent\emph{Case 2. $i=\ceil{n/2}-3$ and $n$ is odd.}
Then $j=\floor{n/2}=i+2$. 
If $r_n'=5$, then $(r_i,r_j)=(0,2)$. 
On the other hand, by \cref{thm:dist:vv:i<n/2},
the assumption $d_2<d_1$ reduces to $q_j+r_j<q_i+r_i+1$, i.e., $q_j+1< q_i$,
which is impossible since $j=i+2$.
Otherwise $r_n'\in\{1,3\}$. 
Then $(r_i,r_j)=\{(1,0),\,(2,1)\}$ and $r_j=r_i-1$. 
By \cref{thm:dist:vv:i<n/2}, 
the assumption $d_2<d_1$ reduces to $q_j+r_j+1<q_i+r_i+1$, i.e.,
$q_j\le q_i$. Since $j=i+2$, 
we find $q_j=q_i$. It follows that $r_j\ge r_i$, a contradiction.

\noindent\emph{Case 3. None of the above.}
Then $i+3\le \floor{n/2}$. 
By \cref{thm:dist:vv:i<n/2},
\begin{equation}\label{pf:d2-d1}
d_2-d_1=q_{i+3}+r_{i+3}-q_i-r_i=1,
\end{equation}
contradicting the assumption $d_2<d_1$.

Therefore, by \cref{thm:dist:vv:i<n/2},
as if $(r_n',i)\ne (2,\,n/2-2)$,
$d_2\ge d_1=q_i+r_i+1$.
\end{proof}

\begin{lemma}\label{lem:v-ev:i<n/2}
Let $0\le i\le \ceil{n/2}-2$, $d_1=d(v_0,v_i)$
and $d_2=d(v_0,v_{i+3})$. Suppose that $d_2<d_1$.
Then one of the following is true.
\begin{enumerate}
\item
$r_n'=2$ and $(i,d_2)=(M_n,\,q_r+r_i-1)$.
\item
$r_n'\in\{1,5\}$ and $(i,d_2)=(M_n,\,q_r+r_i)$.
\item
$r_n'\in\{2,4\}$ and $(i,d_2)=(M_n-3,\,q_r+r_i+1)$.
\end{enumerate}
\end{lemma}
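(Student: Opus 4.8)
The plan is to mirror the proof of \cref{lem:u-ev:i<n/2}, with the base point $u_0$ replaced by $v_0$. First I would use \cref{lem:dist:symm} to rewrite $d_2=d(v_0,v_{i+3})=d(v_0,v_{n-i-3})$; setting $j=n-i-3$, both $d_1$ and $d_2$ can then be read off from \cref{thm:dist:vv:i<n/2} as soon as their indices lie in $[0,\floor{n/2}]$. The range $0\le i\le\ceil{n/2}-2$ splits according to whether $i+3\le\floor{n/2}$ (the bulk) or not (the two boundary positions $i=\ceil{n/2}-2$, and $i=\ceil{n/2}-3$ for odd $n$), which is exactly the partition used in \cref{lem:u-ev:i<n/2}.

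In the bulk I would use the elementary identities $r_{i+3}=r_i$ and $q_{i+3}=q_i+1$, so that the generic clause of \cref{thm:dist:vv:i<n/2} gives $d_2=d_1+1$. Hence $d_2<d_1$ can occur only when the corrections attached to the distinguished indices $M_n$, $\floor{n/2}$, or to $r_i=0$ intervene. Writing each distance as its baseline $q_\bullet+r_\bullet+2$ minus a \emph{discount}, the inequality $d_2<d_1$ forces the discount of $d_2$ to exceed that of $d_1$ by at least $2$; since $i<\floor{n/2}$ caps the discount of $d_1$ at $2$ while that of $d_2$ is at most $3$, only two configurations survive. Either the discount of $d_2$ equals $3$, which is the $(5,\floor{n/2})$ correction and forces $r_n'=5$ with $i+3=\floor{n/2}$, whence $i=M_n$ automatically and outcome~(2) results; or the discount of $d_2$ equals $2$ with $d_1$ generic, which forces $r_n'\in\{2,4\}$ with $i+3=M_n$ and yields outcome~(3). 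The arithmetic fact that makes this clean is that $M_n+3>\floor{n/2}$ whenever $r_n'\in\{2,4\}$, so $i=M_n$ itself never occurs inside the bulk; and the competing case $r_i=0$ is ruled out because it already forces the discount of $d_1$ to be $2$.

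For the two boundary positions I would compute $j$ explicitly and compare $d(v_0,v_j)$ with $d(v_0,v_i)$ directly. When $i=\ceil{n/2}-2$ and $n$ is odd one finds $j=i$, so $d_1=d_2$, a contradiction; when $n$ is even, $j=i+1$, and running through $r_n'\in\{0,2,4\}$ shows $d_2<d_1$ only for $r_n'=2$, where $i=M_n$ and outcome~(1) appears. When $i=\ceil{n/2}-3$ with $n$ odd one finds $j=\floor{n/2}$, and running through $r_n'\in\{1,3,5\}$ leaves only $r_n'=1$, where again $i=M_n$ and outcome~(2) appears. Assembling the surviving configurations yields exactly conclusions (1)--(3).

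I expect the main obstacle to be the bookkeeping at the two near-boundary indices, where the symmetry move $v_{i+3}\mapsto v_{n-i-3}$ deposits the index on $\floor{n/2}$ or $\floor{n/2}-1$ and several clauses of \cref{thm:dist:vv:i<n/2}---the $(5,\floor{n/2})$ correction, the two $M_n$-corrections, and the $r_i=0$ clause---compete simultaneously. Keeping the value of $M_n=\max\{j<\floor{n/2}\colon r_j=r_n\}$ correct across all six residues $r_n'$, and verifying that no further configuration slips past the discount inequality, is where the care lies; the bulk case itself is dispatched quickly once the discount inequality is set up.
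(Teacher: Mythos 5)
Your argument is correct and arrives at exactly the three outcomes of the lemma, and it rests on the same ingredients as the paper's proof: \cref{thm:dist:vv:i<n/2}, the identity $q_{i+3}+r_{i+3}=q_i+r_i+1$, and the reflection $v_{i+3}\mapsto v_{n-i-3}$ when the index passes $\floor{n/2}$. The difference lies in how the range of $i$ is cut. The paper treats $r_n'\in\{0,3\}$ separately first, takes the generic region to be $i+3<M_n$ (where $d_2=d_1+1$ at once), and then tabulates $(d_1,d_2)$ for every index in the window $M_n-3\le i\le\floor{n/2}-2$ over the residues $r_n'\in\{1,2,4,5\}$ (\cref{tab:d1d2:vv}), reading the exceptional triples off the table. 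You extend the generic region to $i+3\le\floor{n/2}$ and replace most of that table by a single inequality: $d_2<d_1$ forces the correction term of $d(v_0,v_{i+3})$ in \cref{thm:dist:vv:i<n/2} to exceed that of $d(v_0,v_i)$ by at least $2$, which (after ruling out the $r_i=0$ clause, since $r_{i+3}=r_i$) leaves only $(r_n',i+3)=(5,\floor{n/2})$, giving outcome (2), and $r_n'\in\{2,4\}$ with $i+3=M_n$, giving outcome (3); the one or two indices with $i+3>\floor{n/2}$ are then checked by hand and supply outcome (1) and the $r_n'=1$ instance of (2). Your bookkeeping at those boundary indices agrees with the paper's table, and your observation that $M_n+3>\floor{n/2}$ for $r_n'\in\{2,4\}$ correctly excludes the remaining way a correction could interfere inside the bulk. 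The trade-off is that your version compresses the case check into an inequality on correction terms, while the paper's table exhibits all values of $(d_1,d_2)$ in the critical window explicitly; both are complete. (In either write-up, the $q_r$ in the statement should be read as $q_i$, as your computations and \cref{tab:d1d2:vv} both confirm.)
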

\begin{proof}
Suppose that $r_n=0$.
When $i+3\le n/2$, we can deduce \cref{pf:d2-d1}
by \cref{thm:dist:vv:i<n/2},
contradicting the premise $d_2<d_1$.
Consider the other case $n/2-3<i\le \ceil{n/2}-2$.
If $n=6h$, then
$i=3h-2$ and $d_1=h+2<h+3=d_2$, the same contradiction.
Otherwise $n=6h+3$.
Then $i=3h-1$ and $d_1=d_2=h+3$, the same contradiction.

Below we can suppose that $r_n\ne0$.
When $i+3<M_n$, we obtain the same contradiction \cref{pf:d2-d1}  to the premise $d_2<d_1$. It remains to compute
the pair $(d_1,d_2)$ for $M_n-3\le i\le \ceil{n/2}-2$.
We observe that the upper bound can be further improved to $\floor{n/2}-2$ when $n$ is odd since $d_1=d_2$ by symmetry when $i=(n+1)/2-2$.
By \cref{thm:dist:vv:i<n/2}, we find \cref{tab:d1d2:vv},
from which we see that

\begin{itemize}
\item
if $r_n'\in\{1,5\}$, then $(i,d_2)=(M_n,\,q_i+r_i)$;
\item
if $r_n'=2$, then $(i,d_2)=(M_n,\,q_i+r_i-1)$ or $(i,d_2)=(M_n-3,\,q_i+r_i+1)$; and
\item
if $r_n'=4$, then $(i,d_2)=(M_n-3,\,q_i+r_i+1)$.
\end{itemize}
\begin{table}[h]
\centering
\caption{The distances $d_1$ and $d_2$ when $M_n-3\le i\le \floor{n/2}-2$.}
\label{tab:d1d2:vv}
\begin{tabular}{@{} CCCCCCCCC @{}}\toprule
&&\multicolumn{6}{C}{i}\\
\cmidrule(r){3-8}
n & & 3h & 3h-1 & 3h-2 & 3h-3 & 3h-4 & 3h-5 \\
\midrule
6h+1 
& d_1
&  &  & h+1 & h-1 & h+2 & h+1 \\ 
& d_2 
&  &  & h & h & h+3 & h+1 \\
6h+2
& d_1
&  & h+1 & h+2 & h-1 & h+2 &  \\ 
& d_2 
&  & h & h+3 & h &  h+1 &  \\
6h+4
& d_1
& h+3 & h+3 & h+2 &  &  &  \\ 
& d_2 
& h+3 & h+4 &  h+1 &  &  &  \\ 
6h+5
& d_1 & h & h+2 & h+2 & h-1 & h+2 & \\ 
& d_2 & h+1 & h+1 & h+3 & h &  h+2 & \\
\bottomrule
\end{tabular}
\end{table}
This completes the proof.
\end{proof}

Now, combining \cref{lem:eu:i<n/2,lem:es:i<n/2,lem:u-ev:i<n/2,lem:v-ev:i<n/2},
it is elementary to show \cref{thm:dist:ve} for when $i<n/2$,
and easy to derive the formulas for $i\ge n/2$ by symmetry. 
We leave the proof details to the readers.

\section{Proof of \cref{prop:edim>=4}}\label[sec]{sec:edim>=4}
This section is devoted to proving that the edge dimension of 
the graph $P(n,3)$ is at least 4.
Looking for a pair of edges that have the same distance to a given vertex,
we focus on pairs of edges of the form $(e_{i-1}^u,\,e_i^u)$.

\begin{lemma}\label{lem:A}
Let $n\ge 100$ and $A_0=\brk[c]{i\in\mathbb{Z}_n\colon d(u_0,\,e_{i-1}^u)=d(u_0,\,e_i^u)}$. Then
\[
A_0=\begin{cases}
\{\pm i\in\mathbb{Z}\colon 5\le i<n/2\text{ and }r_i\ne 1\}\cup\{0,\,n/2\},
&\text{if $n$ is even};\\
\{\pm i\in\mathbb{Z}\colon 5\le i<n/2\text{ and }r_i\ne 1\}\cup\{0\},
&\text{if $n$ is odd}.
\end{cases}
\]
\end{lemma}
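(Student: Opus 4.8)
The set $A_0$ collects exactly those indices $i$ at which the vertex-edge distance $d(u_0,\,e^u_i)$ from $u_0$ to a consecutive outer edge stops strictly increasing as $i$ grows, so the natural strategy is to convert the defining equation $d(u_0,e^u_{i-1})=d(u_0,e^u_i)$ into a statement purely about the monotonicity of $d(u_0,u_j)$ along the outer cycle. Since $e^u_i=u_iu_{i+1}$, the definition of vertex-edge distance gives $d(u_0,e^u_i)=\min\brk1{d(u_0,u_i),\,d(u_0,u_{i+1})}$, and \cref{thm:dist:ve} already supplies the closed form for $d(u_0,e^u_i)$ directly. The first thing I would do is invoke \cref{lem:eu:i<n/2}, which tells me precisely when $d(u_0,u_{i+1})<d(u_0,u_i)$, namely exactly when $i\in\{5,8,11,\dots,3\floor{n/6}-1\}$, i.e.\ when $5\le i<n/2$ and $r_i=2$. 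This is the engine of the whole computation: it pins down the indices where the outer-vertex distance $d(u_0,u_i)$ decreases by one as the index advances, which is the only mechanism that can force two adjacent outer edges to share a distance value.

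\emph{Reduction to adjacent-vertex behavior.}
I would express $d(u_0,e^u_{i-1})$ and $d(u_0,e^u_i)$ each as a minimum of two adjacent outer-vertex distances, so that the equality $d(u_0,e^u_{i-1})=d(u_0,e^u_i)$ becomes a comparison among $d(u_0,u_{i-1})$, $d(u_0,u_i)$, and $d(u_0,u_{i+1})$. Using \cref{lem:eu:i<n/2} to track where the sequence $d(u_0,u_j)$ goes up by $1$ and where it goes down by $1$, the equality $d(u_0,e^u_{i-1})=d(u_0,e^u_i)$ holds precisely when the local pattern of increments around index $i$ causes the two consecutive minima to coincide. Working restricted to $5\le i<n/2$ and reading off the residue condition, this isolates the indices $i$ with $r_i\ne 1$; the excluded residue $r_i=1$ corresponds exactly to the position where the distance strictly increases on both sides, so the two edge distances differ. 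I would handle the small indices $i\le 4$ and the indices near $n/2$ separately as boundary cases, since \cref{thm:dist:ve} gives different (non-generic) formulas there and $d(u_0,u_i)=\min(i,\,n-1-i)$ type expressions must be checked by hand.

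\emph{Symmetry and the endpoints.}
The symmetry relation $d(u_0,e^u_j)=d(u_0,e^u_{-j-1})$ from \cref{lem:dist:symm} lets me fold the range $i\ge n/2$ onto $i<n/2$, which is why the answer appears as $\{\pm i\}$; I would verify that this reflection sends the equality condition at index $i$ to the corresponding equality at index $-i$, accounting for the $-1$ shift built into the edge reflection. The genuinely delicate part is the treatment of the wrap-around indices $i=0$ and $i=n/2$ (for even $n$), where adjacency on $\mathbb{Z}_n$ interacts with the parity of $n$: at $i=0$ the edges $e^u_{-1}=e^u_{n-1}$ and $e^u_0$ are equidistant from $u_0$ by the reflection symmetry, forcing $0\in A_0$ unconditionally, whereas $n/2\in A_0$ only when $n$ is even, since only then is there a genuine antipodal edge pair. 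Confirming these endpoint memberships — and confirming that $n/2\notin A_0$ when $n$ is odd because no two distinct adjacent edges are antipodal — is where the parity split in the statement comes from, and I expect that case analysis at the cycle's ``seam'' to be the main obstacle; the bulk condition $5\le i<n/2,\ r_i\ne1$ follows more or less mechanically once \cref{lem:eu:i<n/2} is in hand.
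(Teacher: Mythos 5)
Your proposal is correct and ends at the same place as the paper's proof --- the criterion $r_i\ne 1$ for $5\le i<n/2$, plus symmetry for $-i$ and direct checks at $0$, $n/2$ and $\pm1,\dots,\pm4$ --- but it gets there through a different intermediate result. The paper's argument is a three-line substitution into \cref{thm:dist:ve}: for $5\le i<n/2$ one has $d(u_0,e_{i-1}^u)=2+\lceil(i-1)/3\rceil$ and $d(u_0,e_i^u)=2+\lceil i/3\rceil$, so membership in $A_0$ is exactly the statement $\lceil(i-1)/3\rceil=\lceil i/3\rceil$, i.e.\ $r_i\ne1$. You instead unwind $d(u_0,e_i^u)=\min\bigl(d(u_0,u_i),\,d(u_0,u_{i+1})\bigr)$ and read the answer off the monotonicity pattern of $j\mapsto d(u_0,u_j)$ supplied by \cref{lem:eu:i<n/2}. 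That works, but two small corrections are needed. First, equality of the two consecutive minima arises from \emph{two} distinct local patterns, not one: for $r_i=0$ the vertex $u_i$ is a local minimum of the distance sequence, so both incident edges attain the common value there, while for $r_i=2$ it is a local maximum flanked by equal values; so the one-step decrease is not ``the only mechanism'' forcing equality. Second, the paraphrase of \cref{lem:eu:i<n/2} as ``$r_i=2$ and $5\le i<n/2$'' misses the exceptional index $i=\lfloor n/2\rfloor$ when $r_n'=5$, where the distance stays flat rather than decreasing (the conclusion $i\in A_0$ still holds there, but it must be verified as one of the boundary cases you set aside). Since \cref{lem:eu:i<n/2} is itself one of the ingredients from which \cref{thm:dist:ve} is assembled, your route is essentially the paper's computation carried out one level lower: it proves the same thing, at the cost of a small case analysis that the closed-form edge-distance formula already encapsulates.
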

\begin{proof}
By symmetry, $n/2\in A_0$ when $n$ is even.
By \cref{thm:dist:ve},
it is direct to verify that $0\in A_0$
and $\{\pm1,\,\pm2,\,\pm3,\,\pm4\}\cap A_0=\emptyset$.
Consider $5\le i<n/2$. By \cref{thm:dist:ve}, 
\begin{align*}
d(u_0,\,e_{i-1}^u)
&=2+\min\brk1{\ceil{(i-1)/3},\,\ceil{(n-i)/3}}=2+\ceil{(i-1)/3},\\
d(u_0,\,e_i^u)
&=2+\min\brk1{\ceil{i/3},\,\ceil{(n-i-1)/3}}=2+\ceil{i/3}.
\end{align*}
Hence we derive the equivalences
$
i\in A_0
\iff
\ceil{(i-1)/3}=\ceil{i/3}
\iff
r_i\ne1$.
By symmetry, we know that $-i\in A_0\iff r_i\ne1$.
This completes the proof.
\end{proof}

\begin{lemma}\label{lem:D}
Let $n\ge 100$ and $B_0=\brk[c]{i\in\mathbb{Z}_n\colon d(v_0,\,e_{i-1}^u)=d(v_0,\,e_i^u)}$. Then
\[
B_0=\begin{cases}
\{\pm i\in\mathbb{Z}\colon 0\le i<n/2\text{ and }r_i\ne 1\}\cup\{n/2\},
&\text{if $n$ is even};\\
\{\pm i\in\mathbb{Z}\colon 0\le i<n/2\text{ and }r_i\ne 1\},
&\text{if $n$ is odd}.
\end{cases}
\]
As a consequence, $A_0\subseteq B_0$.
\end{lemma}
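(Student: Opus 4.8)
The plan is to mirror the proof of \cref{lem:A}, with the base vertex $u_0$ replaced by $v_0$. First I would record the reflection symmetry of \cref{lem:dist:symm}: since $w_t\mapsto w_{-t}$ carries the edge $e_j^u=u_ju_{j+1}$ to $e_{-j-1}^u$ and fixes $v_0$, we have $d(v_0,e_j^u)=d(v_0,e_{-j-1}^u)$. Substituting $j=-i$ and $j=-i-1$ shows $i\in B_0\iff -i\in B_0$, so $B_0=-B_0$ and it suffices to locate the members with $0\le i\le\floor{n/2}$. The same reflection swaps $e_{n/2-1}^u$ and $e_{n/2}^u$ when $n$ is even, which at once yields $n/2\in B_0$ and explains the extra element in the even case.

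For the interior range I would write $d(v_0,e_i^u)=\min\brk1{d(v_0,u_i),\,d(v_0,u_{i+1})}$ and invoke $d(v_0,u_i)=d(u_0,v_i)=q_i+r_i+1$ from \cref{lem:dist:symm,thm:dist:vv:i<n/2}. By \cref{lem:eu:i<n/2} the sequence $d(v_0,u_i)$ drops, by exactly $1$ (using \cref{prop:qr<qr}), precisely when $r_i=2$ and is nondecreasing otherwise, so the minimum above is realized by $d(v_0,u_{i+1})$ when $r_i=2$ and by $d(v_0,u_i)$ otherwise. This collapses the formula to the compact form $d(v_0,e_i^u)=q_i+1$ when $r_i=0$ and $d(v_0,e_i^u)=q_i+2$ when $r_i\in\{1,2\}$, which one may equally well read off by resolving the minimum in the $d(v_0,e_i^u)$ line of \cref{thm:dist:ve}.

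Comparing consecutive values of this compact expression then decides membership. Using $q_{i-1}=q_i-1$ when $r_i=0$ and $q_{i-1}=q_i$ otherwise, I find that $d(v_0,e_{i-1}^u)$ and $d(v_0,e_i^u)$ share the common value $q_i+1$ when $r_i=0$ and $q_i+2$ when $r_i=2$, but differ by $1$ when $r_i=1$; hence $i\in B_0\iff r_i\ne1$ throughout the interior, the base index $i=0$ lying in $B_0$ by symmetry. Assembling this with the midpoint contribution and closing under negation gives the stated description of $B_0$. The consequence $A_0\subseteq B_0$ is then immediate by comparing the two set descriptions: the generators of $A_0$, namely $0$, the even-case element $n/2$, and the indices $\pm i$ with $5\le i<n/2$ and $r_i\ne1$, all satisfy the looser defining condition of $B_0$.

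I expect the only real obstacle to be the boundary at the top of the range, where the clean identity $d(v_0,u_i)=q_i+r_i+1$ breaks at the exceptional index $(r_n',i)=(5,\floor{n/2})$ of \cref{thm:dist:vv:i<n/2}, and where evaluating $d(v_0,e_i^u)=\min\brk1{d(v_0,u_i),\,d(v_0,u_{i+1})}$ needs the distance to $u_{i+1}$ with $i+1>\floor{n/2}$, supplied by the reflection symmetry. I would settle the handful of indices nearest $\floor{n/2}$ by direct computation; a short check confirms that the rule $i\in B_0\iff r_i\ne1$ persists up to the midpoint, so no exceptional indices disturb the final description.
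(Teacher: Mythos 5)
Your proposal is correct and follows essentially the same route as the paper: reduce to $0\le i<n/2$ by the reflection symmetry (which also supplies $0$ and $n/2$), then decide membership by a case analysis on $r_i$, finding equality of $d(v_0,e_{i-1}^u)$ and $d(v_0,e_i^u)$ exactly when $r_i\ne 1$. The only cosmetic difference is that you resolve the minima via the vertex--vertex distances $d(v_0,u_i)=q_i+r_i+1$ and the monotonicity of \cref{lem:eu:i<n/2}, whereas the paper reads the values for $r_i\in\{1,2\}$ directly off \cref{thm:dist:ve} using $q_n-q_i\ge q_i+1$; your flagged boundary case $(r_n',i)=(5,\floor{n/2})$ indeed does not disturb the conclusion.
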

\begin{proof}
It is clear that $0\in B_0$.
By symmetry, $n/2\in B_0$ when $n$ is even.
Consider $1\le i<n/2$.
Let $d_1=d(v_0,\,e_{i-1}^{u})$ and $d_2=d(v_0,\,e_i^u)$.
By \cref{thm:dist:ve},
it is direct to check that for $0\le i\le\ceil{n/2}-1$,
\[
d(v_0,u_{i+1})<d(v_0,u_i)
\iff
i\in\{2,\,5,\,8,\,\dots,\,3\floor{n/6}-1\}.
\]
Therefore, if $r_i=0$, then $d_1=d(v_0,u_i)=d_2$ and $i\in B_0$.

Suppose that $r_i=1$. Then $(q_{i-1},\,r_{i-1})=(q_i,0)$.
Since $3q_n+r_n=n>2i=6q_i+2$, we find $q_n>2q_i$,
which implies that 
\begin{equation}\label[ineq]{pf:ineq:qn-qi:D0}
q_n-q_i\ge q_i+1.
\end{equation}
By \cref{thm:dist:ve},
\begin{align*}
d_1
&=\min(q_{i-1}+\floor{r_{i-1}/2}+1,\,q_n-q_{i-1}+\floor{r_n/2}-\floor{r_{i-1}/2}+1)\\
&=\min(q_i+1,\,q_n-q_i+\floor{r_n/2}+1)=q_i+1,\quad\text{and}\quad\\
d_2&=\min(q_i+2,\,q_n-q_i+1)=q_i+2.
\end{align*}
Therefore, $d_2\ne d_1$. This proves that $i\not\in B_0$.
 
Suppose that $r_i=2$.
Then $(q_{i-1},\,r_{i-1})=(q_i,1)$.
Along the same line we can derive \cref{pf:ineq:qn-qi:D0}.
When $r_n=0$, one may enhence it to $q_n-q_i\ge q_i+2$ since $3q_n=n>2i=6q_i+4$.
By \cref{thm:dist:ve},
\begin{align*}
d_1
&=\min(q_{i-1}+2,\,q_n-q_{i-1}+1)
=\min(q_i+2,\,q_n-q_i+1)
=q_i+2,\\
d_2
&=\begin{cases}
\min(q_i+\floor{r_i/2}+1,\,q_n-q_i+\floor{r_n/2}-\floor{r_i/2}+1),
&\text{if $r_n=0$}\\
\min(q_i+2,\,q_n-q_i+1),
&\text{if $r_n\ne 0$}
\end{cases}\\
&=\begin{cases}
\min(q_i+2,\,q_n-q_i),
&\text{if $r_n=0$}\\
\min(q_i+2,\,q_n-q_i+1),
&\text{if $r_n\ne 0$}
\end{cases}\\
&=q_i+2.
\end{align*}
By symmetry, we obtain the desired formula for $B_0$.
As a consequence, $A_0\subseteq B_0$ by \cref{lem:A}.
\end{proof}

For $t\in\mathbb{Z}_n$, define $A_t=A_0+t$ and $B_t=B_0+t$. 
From the definitions, we see that
\begin{align*}
d(u_t,\,e_{i-1}^u)&=d(u_t,\,e_i^u)\quad\text{for $i\in A_t$},\quad\text{and}\\
d(v_t,\,e_{j-1}^u)&=d(v_t,\,e_j^u)\quad\text{for $j\in B_t$}.
\end{align*}
By \cref{lem:D}, we obtain 
\begin{equation}\label{At:Bt}
A_t\subseteq B_t\qquad \text{for any $t\in\mathbb{Z}_n$}.
\end{equation}

\begin{lemma}\label{lem:3:distinct}
Let $n\ge 100$.
 If the graph $P(n,3)$ has an edge resolving triad $R$,
then the vertices in $R$ have distinct subscripts.
\end{lemma}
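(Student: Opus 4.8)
The plan is to establish the contrapositive: assuming that two vertices of $R$ share a subscript, I will exhibit a pair of distinct edges that $R$ fails to resolve. By the rotational symmetry $w_j\mapsto w_{j+t}$ of $P(n,3)$, I may rotate so that the repeated subscript is $0$. Since $u_0$ and $v_0$ are the only vertices carrying subscript $0$, and $\abs{R}=3$, the triad must be of the form $R=\{u_0,\,v_0,\,\gamma_c\}$ for some $\gamma\in\{u,v\}$ and some $c\neq0$. It therefore suffices to produce two distinct edges that none of $u_0$, $v_0$, $\gamma_c$ resolves.

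For the candidate pair I would take two consecutive outer edges $e_{i-1}^u$ and $e_i^u$. By the definitions of $A_t$ and $B_t$, the vertex $u_0$ fails to resolve this pair exactly when $i\in A_0$, the vertex $v_0$ fails exactly when $i\in B_0$, and the vertex $\gamma_c$ fails exactly when $i\in A_c$ (if $\gamma=u$) or $i\in B_c$ (if $\gamma=v$). Invoking $A_0\subseteq B_0$ from \cref{lem:D}, every index $i\in A_0$ is simultaneously unresolved by $u_0$ and by $v_0$; and invoking $A_c\subseteq B_c$ from \eqref{At:Bt}, an index $i\in A_c$ suffices for $\gamma_c$ in either case $\gamma\in\{u,v\}$. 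Hence the whole statement reduces to the single assertion that $A_0\cap A_c\neq\emptyset$ for every $c\in\mathbb{Z}_n$, after which any $i\in A_0\cap A_c$ furnishes the unresolved distinct pair $e_{i-1}^u,e_i^u$ and the desired contradiction.

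This last assertion carries the only quantitative content, and I would settle it by counting. From the description of $A_0$ in \cref{lem:A}, the set $A_0$ is the disjoint union of $P=\brk[c]1{i\colon 5\le i<n/2,\ r_i\neq1}$, its reflection $-P$, and one or two boundary indices among $\{0,n/2\}$ (the latter present exactly when $n$ is even). Since at least two of every three consecutive integers satisfy $r_i\neq1$, one has $\abs{P}\ge\tfrac23\brk1{\ceil{n/2}-5}-1$, and thus $\abs{A_0}=2\abs{P}+O(1)\ge\tfrac23n-O(1)$, which exceeds $n/2$ once $n\ge100$. Because translation preserves cardinality in $\mathbb{Z}_n$ we have $\abs{A_c}=\abs{A_0}$, so inclusion–exclusion gives $\abs{A_0\cap A_c}\ge2\abs{A_0}-n>0$, as required.

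The only genuine obstacle is the bookkeeping in the bound $\abs{A_0}>n/2$: one must correctly track the factor $2$ coming from the $\pm i$ reflection, the excluded residue class $r_i=1$, and the boundary indices $0$ and $n/2$, whose treatment differs between the even and odd cases of \cref{lem:A}. Once that estimate is in hand, the pigeonhole conclusion and the passage through the containments $A_0\subseteq B_0$ and $A_c\subseteq B_c$ are routine, and notably require no case analysis on $\gamma$ or on the residue of $n$.
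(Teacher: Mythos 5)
Your proposal is correct, and its skeleton coincides with the paper's: reduce (by rotation) to $R=\{u_0,v_0,\gamma_c\}$, pass through the containments $A_0\subseteq B_0$ and $A_c\subseteq B_c$ so that everything hinges on the single claim $A_0\cap A_c\neq\emptyset$, and then conclude via an unresolved pair $e_{i-1}^u,e_i^u$. The only genuine divergence is in how that last claim is settled. The paper exhibits an explicit witness, namely $c\pm5$ or $c\pm6$ depending on $r_c$ and on whether $c\le\floor{n/2}$ (its Table~\ref{tab:00:eta:t}), which requires a small case check against Lemma~\ref{lem:A} but yields a concrete element. You instead count: $\abs{A_0}\ge 2\bigl(\tfrac23(\ceil{n/2}-5)-1\bigr)+1>n/2$ for $n\ge100$, so $\abs{A_0\cap A_c}\ge2\abs{A_0}-n>0$ by inclusion--exclusion. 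Your estimate checks out (the sets $P$, $-P$, and the boundary indices are indeed disjoint, and the inequality already holds for $n>46$), and it buys uniformity: no case analysis on $r_c$ or on the position of $c$. What it gives up is the explicit witness, which the paper reuses in spirit when it later needs concrete elements of triple intersections $A_0\cap A_a\cap A_b$ in \S\ref{sec:S-W}; note that for three translates the same pigeonhole bound would require $\abs{A_0}>2n/3$, which fails, so your counting trick does not extend to the harder Proposition~\ref{prop:S-W}, whereas the explicit-witness technique does.
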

\begin{proof}
Suppose that $P(n,3)$ has an edge resolving set $R=\{\alpha_x,\beta_y,\gamma_z\}$,
where $\alpha,\beta,\gamma\in\{u,v\}$ and $x,y,z\in\mathbb{Z}_n$.
Let $T=\{x,y,z\}$. Then $\abs{T}\in\{2,3\}$.

Assume that $\abs{T}=2$. By symmetry, we can suppose that $R=\{u_0,v_0,\gamma_t\}$, where $1\le t\le\floor{n/2}$. 
By \cref{lem:A,lem:D},
it suffices to show that
\begin{itemize}
\item
$A_0\cap B_0\cap A_t\ne \emptyset$ if $\gamma=u$, and that
\item
$A_0\cap B_0\cap B_t\ne \emptyset$ if $\gamma=v$.
\end{itemize} 
In view of \eqref{At:Bt}, it suffices to show $A_0\cap A_t\ne \emptyset$.
In fact, by \cref{lem:A}, it is easy to read out an element in the intersection; see \cref{tab:00:eta:t}.
\begin{table}[htbp]
\centering
\caption{An element in the intersection $A_0\cap A_t$, when $n\ge 100$.}
\label{tab:00:eta:t}
\begin{tabular}{@{} CCCC @{}}\toprule
t & r_t=0 & r_t=1 & r_t=2  \\ 
\midrule
\le \floor{n/2}
& t+6
& t+5 
& t+6 \\[3pt]
>\floor{n/2}
& t-6
& t-5
& t-5 \\
\bottomrule
\end{tabular}
\end{table}
\end{proof}

\Cref{lem:3:distinct} leads us to consider vertex triads with distinct subscripts.
We say that two triples $T=(x,y,z)\in\mathbb{Z}_n^3$ and $T'\in\mathbb{Z}_n^3$
are \emph{equivalent} if $T'=(x+a,\,y+a,\,z+a)$ for some $a\in\mathbb{Z}_n$, 
or $T'=(2t-x,\,2t-y,\,2t-z)$ for some $t\in\mathbb{Z}_n$, i.e., 
if $T'$ is a translation or reflection of~$T$.
It is clear that a vertex triad $\{\alpha_x,\beta_y,\gamma_z\}$,
where $\alpha,\beta,\gamma\in\{u,v\}$ and where $x,y,z\in\mathbb{Z}_n$ are distinct,
is an edge resolving set
if and only if so is every vertex set $\{\alpha_{x'},\beta_{y'},\gamma_{z'}\}$,
where $(x',y',z')$ is equivalent to $(x,y,z)$.
Considering the representatives of the equivalence classes,
we produce \cref{prop:Sn}.

\begin{proposition}\label{prop:Sn}
For any 3 distinct elements $x,y,z\in\mathbb{Z}_n$,
there exist $(a,b)\in S_n$ such that the triples 
$(x,y,z)$ and $(0,a,b)$ are equivalent.
\end{proposition}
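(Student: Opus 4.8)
The plan is to translate the statement into the language of arc lengths on the cycle $\mathbb{Z}_n$ and to recognize $S_n$ as the set of canonical representatives. Three distinct elements of $\mathbb{Z}_n$ partition the cycle into three arcs of integer lengths $d_1,d_2,d_3\ge 1$ with $d_1+d_2+d_3=n$, and conversely such a composition of $n$ determines the configuration up to symmetry. The translations $w\mapsto w+c$ and the reflections $w\mapsto c-w$ that generate the equivalence relation act on the arc-length data exactly by cyclic rotation (which point is taken as the base) and by reversal (the direction of traversal). Hence the equivalence class of the position set $\{x,y,z\}$ is completely recorded by the cyclic-and-reversal class of $(d_1,d_2,d_3)$. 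The first thing I would record is the dictionary between $S_n$ and these arcs: a triad $(0,a,b)$ lies in $S_n$ precisely when its three arc lengths $a$, $b-a$, $n-b$ satisfy $a\le b-a\le n-b$. Indeed $2a\le b$ is $a\le b-a$; the bound $b\le\floor{(n+a)/2}$ is $b-a\le n-b$ together with integrality; and $a\le\floor{n/3}$ is then automatic, since $a$ is the smallest of three parts summing to $n$.

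With this dictionary the proof becomes a normalization argument. First I would read off the arc lengths $d_1,d_2,d_3$ of $\{x,y,z\}$. Because the three elements carry no intrinsic order, I am free to designate any of them as the base point and to traverse the cycle in either direction; concretely I choose the base to start a shortest arc and the direction so that the successive arcs are nondecreasing. Applying the single translation sending this base to $0$, followed if necessary by the reflection fixing $0$ to correct the direction, carries $\{x,y,z\}$ to a triad $\{0,a,b\}$ whose arc lengths are the sorted values $d_1\le d_2\le d_3$, with $a=d_1$ and $b=d_1+d_2$. By the characterization above this $(a,b)$ lies in $S_n$, and $(x,y,z)$ is equivalent to $(0,a,b)$, as required.

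The verifications to be filled in are the four defining inequalities of $S_n$ for the sorted triple: $a\ge 1$ from $d_1\ge 1$; $a\le\floor{n/3}$ because the smallest of three positive integers summing to $n$ is at most $n/3$; $b\ge 2a$ from $d_2\ge d_1$; and $b\le\floor{(n+a)/2}$ from $d_3\ge d_2$ together with integrality. None of these is more than a line once the arcs are sorted.

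I expect the only genuinely delicate point to be the treatment of ties among the arc lengths and the associated boundary cases of $S_n$, for instance $d_1=d_2$ forcing $b=2a$, or $d_2=d_3$ pushing $b$ to the upper end $\floor{(n+a)/2}$, where one must confirm that the floor functions and the weak inequalities are satisfied rather than violated. One should also record that the freedom to relabel the three positions is harmless: the collection $T(a,b)$ already ranges over all assignments of $u$ and $v$ to the three subscripts, so only the unordered position set $\{0,a,b\}$ is relevant to whether a member of $T(a,b)$ resolves the edges. Everything beyond these checks is routine bookkeeping.
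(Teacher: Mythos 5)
Your proof is correct and is essentially the paper's argument: both normalize the triple under the translation/reflection action so that the smallest gap comes first and the middle gap second, which is exactly membership in $S_n$. Your arc-length dictionary ($a\ge 1$ and $a\le b-a\le n-b$ characterizing $S_n$) is a cleaner packaging of the same inequalities, which the paper instead verifies via the pairwise cyclic distance function and a final reflection-plus-contradiction step.
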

\begin{proof}
For any $x,y\in\mathbb{Z}_n$, define 
$d(x,y)=\min\brk1{\abs{x'-y'},\,n-\abs{x'-y'}}$,
where $x'$ and $y'$ are the least nonnegative residue of $x$ and $y$ modulo $n$,
respectively.
By symmetry, we can suppose that 
\[
0=x<y<z<n
\quad\text{and}\quad
d(x,y)\le \min\brk1{d(y,z),\,d(z,x)}.
\]
It follows that $d(0,y)=y$ and $2y\le z\le n-y$. Thus $y\le \floor{n/3}$.
If $z\le \floor{(n+y)/2}$, then we can take $(a,b)=(y,z)$.
It remains to consider when 
\begin{equation}\label[ineq]{pf:floor1}
z\ge\floor{(n+y)/2}+1.
\end{equation}
In this case,
we translate the triple $(0,y,z)$ to $T'=(-y,\,0,\,z-y)$ by substituting $y$ from each coordinate,
and reflect~$T'$ to $T''=(y,\,0,\,y-z)$ about the central axis going through $0$.
Let $a=y$ and $b=y-z+n$. 
Then $T''=(0,a,b)$ forms an edge resolving set. It is routine 
to verify that $1\le a\le \floor{n/3}$ and $2a\le b$.
If $b\le \floor{(n+a)/2}$, then we are done.
Otherwise $b\ge \floor{(n+a)/2}+1$, i.e.,
\begin{equation}\label[ineq]{pf:floor2}
y-z+n\ge\floor{(n+y)/2}+1.
\end{equation}
Adding \cref{pf:floor1,pf:floor2} yields $t\ge 2\floor{t/2}+2$ for $t=y+n$,
which is absurd.
\end{proof}

By \cref{prop:Sn},
it suffices to show that for each $(a,b)\in S_n$,
no triad in the set $T(a,b)$
is an edge resolving set. 
We produce \cref{lem:AAA},
which works for the $\abs{T(a,b)}=8$ triads altogether. 

\begin{lemma}\label{lem:AAA}
Let $n\ge 100$ and $(a,b)\in S_n$.
If $A_0\cap A_a\cap A_b\ne \emptyset$,
the no triad in the set $T(a,b)$ is an edge resolving set.
\end{lemma}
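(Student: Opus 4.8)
The plan is to exhibit a single pair of edges that no vertex occurring in \emph{any} triad of $T(a,b)$ can resolve; producing such a pair kills all eight triads simultaneously. Take any index $i\in A_0\cap A_a\cap A_b$, which exists by hypothesis, and focus on the pair of adjacent outer edges $e_{i-1}^u$ and $e_i^u$ (these are genuinely distinct since $n\ge100$). By the two identities displayed immediately after the definitions of $A_t$ and $B_t$, membership $i\in A_t$ is exactly the statement $d(u_t,e_{i-1}^u)=d(u_t,e_i^u)$, and membership $i\in B_t$ is exactly $d(v_t,e_{i-1}^u)=d(v_t,e_i^u)$. Thus $i\in A_0\cap A_a\cap A_b$ tells us at once that none of the three outer vertices $u_0,u_a,u_b$ resolves this edge pair.

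First I would upgrade this failure from the three $u$-vertices to the corresponding three $v$-vertices, and here the containment \eqref{At:Bt} is precisely what does the work. Since $A_t\subseteq B_t$ for every $t\in\mathbb{Z}_n$, the hypothesis $i\in A_0\cap A_a\cap A_b$ forces $i\in B_0\cap B_a\cap B_b$ as well, so $v_0,v_a,v_b$ also assign equal distance to $e_{i-1}^u$ and $e_i^u$. Consequently each of the six vertices $u_0,v_0,u_a,v_a,u_b,v_b$ fails to separate $e_{i-1}^u$ from $e_i^u$.

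To finish, I would observe that every triad in $T(a,b)$ has the form $\{\alpha_0,\beta_a,\gamma_b\}$ with $\alpha,\beta,\gamma\in\{u,v\}$, hence is a subset of these six vertices. Since no vertex of the triad distinguishes $e_{i-1}^u$ from $e_i^u$, these two distinct edges receive identical edge metric representations with respect to the triad, so the triad is not an edge resolving set. As this argument is uniform in the choice of $(\alpha,\beta,\gamma)$, it disposes of all $2^3=8$ triads at once.

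Pleasingly, there is essentially no obstacle once \eqref{At:Bt} is in hand: the whole lemma is a direct consequence of the inclusion $A_t\subseteq B_t$, which is exactly what allows one witness index $i$ to knock out every triad regardless of whether each position $0,a,b$ is occupied by a $u$-vertex or a $v$-vertex. The only point demanding care is the bookkeeping that the failure of the three $u$-vertices transfers automatically to the $v$-vertices sitting at the same subscripts $0,a,b$ — but this transfer is precisely the content of \eqref{At:Bt} and requires no further computation. The genuine difficulty therefore lies not in \cref{lem:AAA} itself but upstream, in verifying that $A_0\cap A_a\cap A_b\ne\emptyset$ for almost all $(a,b)\in S_n$, which is deferred to the subsequent analysis.
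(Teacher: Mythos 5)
Your argument is correct and is essentially identical to the paper's proof: pick a witness $i\in A_0\cap A_a\cap A_b$, note that the edge pair $(e_{i-1}^u,e_i^u)$ is unresolved by $u_0,u_a,u_b$ by the definition of $A_t$, and then use the containment \eqref{At:Bt} to transfer the failure to $v_0,v_a,v_b$, killing all eight triads at once. No differences worth noting.
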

\begin{proof}
Suppose that $A_0\cap A_a\cap A_b\ne \emptyset$.
By \cref{lem:A}, the vertex triad $\{u_0,u_a,u_b\}$ is not a resolving set.
By \eqref{At:Bt}, any other triad in $T(a,b)$ is not a resolving set.
\end{proof}

Inspired by \cref{lem:AAA},
we are going to determine, as more as possible, the pairs $(a,b)\in S_n$ 
such that $A_0\cap A_a\cap A_b=\emptyset$.
To do this,
we define a set $W_n\subseteq S_n$ by \cref{tab:def:W}.
\begin{table}[htbp]
\centering
\caption{Definition of the set $W_n$ for $n\ge 100$, where $r_n'$ is the residue of $n$ modulo 6.}
\label{tab:def:W}
\begin{tabular}{@{} CLC @{}}
\toprule
r_n' & W_n & \abs{W_n} \\ 
\midrule
0
& \emptyset
& 0 \\[3pt]
1 
& \brk[c]{(1,\,\floor{n/2}-2),\,(2,\,\floor{n/2}),\,(5,\,\floor{n/2}+3)}
& 3 \\[3pt]
2 
& \emptyset
& 0 \\[3pt]
3 
& \brk[c]{(1,2),\,(2,4)}
& 2 \\[3pt]
4
& \brk[c]1{(1,2),\,
\brk{1,\frac{n}{2}-4},\,
\brk{1,\frac{n}{2}-1},\,
\brk{2,\frac{n}{2}-2},\,
\brk{2,\frac{n}{2}+1},\,
\brk{4,\frac{n}{2}-1},\,
\brk{5,\frac{n}{2}+1},\,
\brk{8,\frac{n}{2}+4}}
& 8\\[3pt]
5 
& \brk[c]{(1,2),\,(1,5),\,(1,8),\,(2,4),\,(2,7),\,(2,10),\,(4,8),\,(4,11),\,(7,14)}
& 9\\
\bottomrule
\end{tabular}
\end{table}

Now, to prove \cref{prop:edim>=4}, 
it suffices to establish \cref{prop:S-W,lem:W}.
\begin{proposition}\label{prop:S-W}
Let $n\ge 100$ and $(a,b)\in S_n$.
If $A_0\cap A_a\cap A_b=\emptyset$, then $(a,b)\in W_n$.
\end{proposition}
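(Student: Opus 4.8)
The plan is to prove the contrapositive: assuming $(a,b)\in S_n\setminus W_n$, I would exhibit an explicit element of $A_0\cap A_a\cap A_b$. Since $A_t=A_0+t$, finding $x$ in this intersection is the same as producing three members of $A_0$ located at the positions $x$, $x-a$ and $x-b$. The description in \cref{lem:A} is the only input needed: apart from the two anchor points $0$ and (when $n$ is even) $n/2$, membership in $A_0$ is governed on the positive arc $5\le i<n/2$ by the single congruence condition $r_i\ne 1$, and $A_0$ is symmetric under $i\mapsto -i$. Thus $A_0$ is dense, containing two out of every three consecutive residues throughout a long arc, so for most $(a,b)$ one has ample freedom to slide $x$ into place.

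I would fix first the residue $x\bmod 3$ and then the magnitude of $x$. For the residue, note that $x$, $x-a$, $x-b$ reduce modulo $3$ to $x$, $x-r_a$, $x-r_b$, so all three avoid the forbidden residue $1$ precisely when $x\bmod 3$ avoids the set $\{1,\,1+r_a,\,1+r_b\}$. This set fails to exhaust $\mathbb{Z}_3$, leaving a safe choice of $x\bmod3$, exactly unless $\{r_a,r_b\}=\{1,2\}$. So I would split into the \emph{benign} case $\{r_a,r_b\}\ne\{1,2\}$, where a residue class for $x$ keeping all three targets off $r=1$ always exists, and the \emph{obstructed} case $\{r_a,r_b\}=\{1,2\}$. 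In the benign case, after choosing such an $x\bmod 3$, I would place $x$ just inside the interior arc—taking $x$ near $\floor{n/2}$ when $b\le n/2$, so that $x-a$ and $x-b$ stay nonnegative and in $[5,n/2)$, and letting $x-b$ wrap into the symmetric negative arc when $b>n/2$. The bound $n\ge 100$ together with $a\le\floor{n/3}$ guarantees the arc is long enough to absorb the shifts by $a$ and $b$, so a valid $x$ always exists.

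The heart of the matter is the obstructed case $\{r_a,r_b\}=\{1,2\}$, which is where the sporadic list $W_n$ originates. Here no residue class for $x$ can keep all three of $x,x-a,x-b$ simultaneously off residue $1$ within the interior arcs, so at least one target must be routed through an anchor point. I would therefore force one shifted position to land exactly on $0$ or $\pm n/2$—both of which lie in $A_0$ regardless of residue—e.g.\ by taking $x=a$, $x=b$, $x=b-a$, or $x$ at distance $n/2$ from one of these, and then checking that the remaining two targets fall in $A_0$. These anchored constructions have little slack: they break down only when $a$ is small or $b$ sits at an extreme of its allowed range $2a\le b\le\floor{(n+a)/2}$, leaving a finite, $n$-independent (per residue $r_n'$) set of boundary pairs. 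I would organize the bookkeeping by the residue $r_n'$ of $n$ modulo $6$, since $r_n'$ controls the exact right endpoint of the arc (through $\floor{n/2}$ and the parity of $n$) and the presence of the anchor $n/2$; this also explains why $W_n$ is empty for $r_n'\in\{0,2\}$, where the anchored routing always succeeds. Carrying out the anchored analysis within each $r_n'$ and collecting the pairs for which every candidate $x$ fails should reproduce precisely \cref{tab:def:W}. The main obstacle is thus not any single estimate but the exhaustive yet careful verification in the obstructed case that the surviving pairs are \emph{exactly} those listed—ensuring neither that a constructible pair is wrongly relegated to $W_n$ nor that a genuinely exceptional pair is omitted.
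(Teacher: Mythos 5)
Your overall plan---prove the contrapositive by exhibiting an explicit element of $A_0\cap A_a\cap A_b$ for every $(a,b)\in S_n\setminus W_n$---matches what the paper actually does for $r_n'\in\{3,1,4\}$ (for $r_n'\in\{2,5,0\}$ the paper instead uses counting arguments via \cref{lem:fence} and the windows $F_{tj}$). The gap is in your residue dichotomy. You treat membership in $A_0$ as governed by the single condition $r_j\ne 1$ throughout $\mathbb{Z}_n$, but \cref{lem:A} describes $A_0$ as $\{\pm i\colon 5\le i<n/2,\ r_i\ne1\}$ plus anchors, with the congruence imposed on the index $i$ of $\pm i$. Hence for an element $j$ on the ``negative'' arc $n/2<j\le n-5$ the requirement is $r_{n-j}\ne 1$, i.e.\ $j\not\equiv r_n-1\pmod 3$; the forbidden residue agrees with $1$ on both arcs only when $r_n=2$. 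Your claim that a safe residue class for $x$ exists ``exactly unless $\{r_a,r_b\}=\{1,2\}$'' therefore fails whenever $r_n\ne2$ and one of the targets $x$, $x-a$, $x-b$ wraps past $0$ or $n/2$ --- which is unavoidable when $b$ is near $n/2$, since then $x-b$ cannot be kept in $[5,n/2)$ no matter where you slide $x$.

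This misclassifies genuine exceptions as benign. For $n=6h+1$ the pair $(a,b)=(2,\floor{n/2})=(2,3h)$ has $\{r_a,r_b\}=\{2,0\}$, so your benign-case argument would assert $A_0\cap A_2\cap A_{3h}\ne\emptyset$; in fact this intersection is empty, which is exactly why $(2,\floor{n/2})$ sits in $W_n$ for $r_n'=1$ and must be handed off to \cref{lem:W}. (Keeping $x$ and $x-2$ simultaneously admissible forces $x\equiv2\pmod3$ on the positive arc or $x\equiv1$ on the negative arc, and in either case $x-3h$ lands either on the forbidden residue of the arc it falls in or inside the excluded window $\{\pm1,\dots,\pm4\}$.) The same defect covers $(1,\floor{n/2}-2)$ and $(5,\floor{n/2}+3)$ for $r_n'=1$, and seven of the eight entries of $W_n$ for $r_n'=4$, all of which have $\{r_a,r_b\}\ne\{1,2\}$. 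A correct version of your construction must track, for each of the three targets, which arc it lands in (hence which residue is forbidden there) as well as the excluded window around $0$; that is precisely the case analysis the paper's \cref{tab:6h+3,tab:6h+1,tab:6h+4} encode, and it is also why the paper abandons explicit construction in favour of the fence/pigeonhole arguments for $r_n'\in\{2,5,0\}$.
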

\begin{lemma}\label{lem:W}
Let $n\ge 100$ and $(a,b)\in W_n$. For any $R\in T(a,b)$,
there exists a pair of edges with the same edge metric representation
with respect to $R$.
\end{lemma}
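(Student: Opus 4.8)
The plan is to verify \cref{lem:W} by brute-force case analysis over the finite exceptional set $W_n$, organized by the residue $r_n'$. The key observation is that $W_n$ is \emph{finite} for each residue class: it has at most $9$ pairs $(a,b)$, and each pair yields $\abs{T(a,b)}=8$ triads, so the whole lemma reduces to exhibiting, for at most $9\times 8=72$ triads per residue, an explicit pair of edges sharing an edge metric representation. For each fixed $(a,b)\in W_n$ and each choice of $(\alpha,\beta,\gamma)\in\{u,v\}^3$, giving the triad $R=\{\alpha_0,\beta_a,\gamma_b\}$, I would produce a witness pair of edges $(e,e')$ and compute the three distances $d(\alpha_0,\cdot)$, $d(\beta_a,\cdot)$, $d(\gamma_b,\cdot)$ to each of $e,e'$ using \cref{thm:dist:ve} together with the translation/reflection symmetries recorded in \cref{lem:dist:symm}. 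The natural witnesses to try first are adjacent outer-cycle edge pairs $(e_{i-1}^u,e_i^u)$, since the entire machinery of $A_0,B_0$ was built around exactly these: if $i\in A_0\cap B_a'\cap B_b'$ for suitable membership sets, the pair is unresolved.

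The first step is to recall that by \eqref{At:Bt} we have $A_t\subseteq B_t$ for every $t$, so if a single index $i$ lies in $A_0\cap A_a\cap A_b$ we would already be done by \cref{lem:AAA}; the point of $W_n$ is precisely that this triple intersection is \emph{empty} for these pairs. Hence for the outer-edge witnesses I cannot rely on all three vertices being of type $u$; instead, for a mixed triad $\{\alpha_0,\beta_a,\gamma_b\}$ I would seek an index $i$ such that $i$ lies in the appropriate membership set for each vertex — $A$ when the vertex is of type $u$ and $B$ when it is of type $v$ — and since $A_0\subseteq B_0$, replacing any $u$-vertex by the corresponding $v$-vertex only enlarges the admissible set. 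This means the genuinely hardest triad in each $T(a,b)$ is the all-$u$ triad $\{u_0,u_a,u_b\}$, and for that one alone the outer-edge witnesses may fail precisely because $A_0\cap A_a\cap A_b=\emptyset$ by hypothesis.

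The main obstacle, therefore, is the all-$u$ triad $\{u_0,u_a,u_b\}$ for each $(a,b)\in W_n$: here I must abandon adjacent outer edges and instead hunt for a witness among spoke edges $e_i^s$ or inner edges $e_i^v$, or an outer-edge pair that is not adjacent. For the small pairs such as $(1,2)$, $(2,4)$, $(1,5)$, $(4,8)$ appearing when $r_n'\in\{3,4,5\}$, the three centres $u_0,u_a,u_b$ are clustered near the origin, so I expect a witness pair to sit diametrically opposite (near index $n/2$) where all three distances are governed by the ``otherwise'' branches of \cref{thm:dist:ve} and coincide by a parity/residue coincidence; for the pairs involving $\floor{n/2}$ (the $r_n'\in\{1,4\}$ families) the two far-apart centres force me to look for witnesses near the origin instead. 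I would tabulate, for each residue $r_n'$ and each $(a,b)\in W_n$, one explicit witness pair together with its common representation, exactly in the spirit of the tables already used (\cref{tab:d1d2:vv}, \cref{tab:00:eta:t}); since each such computation is a direct substitution into the closed-form distance formula, the verification is routine once the witness is named, and the only real work is the finite search for the right witness in each of the roughly $22$ sporadic cases flagged in \cref{sec:strategy}.
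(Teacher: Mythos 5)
Your strategy is the one the paper follows: \cref{lem:W} is proved by exactly the finite case analysis you outline, with the observation $A_t\subseteq B_t$ packaged as \cref{lem:uuv} (an unresolved pair $\{e_{i-1}^u,e_i^u\}$ for the triad $\{u_x,u_y,v_z\}$ stays unresolved after replacing $u_x$ or $u_y$ by $v_x$ or $v_y$), and with explicit witnesses supplied case by case in \cref{prop:a=1:b=2,prop:a=2:b=4:n=6h+3,prop:a=1:b=3h+1:a=2:b=3h+3:n=6h+4,prop:a=4:b=8:b=11:a=7:b=14:n=6h+5} and \cref{tab:W:6h+1,tab:W:6h+4,tab:W:6h+5}. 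However, your structural claim that the all-$u$ triad is the only one for which adjacent outer-edge witnesses can fail is false. From \cref{lem:A,lem:D} one reads off $B_t\setminus A_t=\{t-3,\,t-2,\,t+2,\,t+3\}$, while $A_s$ contains no index within distance $1$ to $4$ of $s$. Concretely, for $n=6h+1$ and $(a,b)=(1,3h-2)\in W_n$, the mixed triad $\{u_0,v_1,u_{3h-2}\}$ has admissible set
\[
A_0\cap B_1\cap A_{3h-2}\;\subseteq\;(A_0\cap A_1\cap A_{3h-2})\cup\bigl(A_0\cap\{-2,-1,3,4\}\bigr)=\emptyset,
\]
so no adjacent outer pair resolves it either; the paper is forced to use the pair $\{e_{3h-1}^u,\,e_{3h-1}^s\}$, an outer edge together with a spoke. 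Several other mixed triads in the tables are in the same situation. Your plan would eventually discover this during the search, but as written it mislocates where the harder hunt is needed.

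The substantive gap is that for this lemma the witnesses \emph{are} the proof, and you produce none. The statement asserts an existence for each of the $22\times 8$ triads; a description of how one would search, together with the (correct) remark that verification is routine once a witness is named, does not certify that the search succeeds. In particular nothing in your argument explains why, say, the all-$u$ triad for $(a,b)=(2,7)$ with $n=6h+5$ admits any unresolved pair at all — the paper's answer is the inner-cycle pair $\{e_{3h+2}^v,\,e_{3h+4}^v\}$, and elsewhere it needs pairs of the form (outer edge, incident spoke), non-adjacent outer pairs such as $\{e_{-5}^u,e_6^u\}$, and inner pairs. Until a table of named witnesses is written down and checked against \cref{thm:dist:ve}, the lemma remains unproved.
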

We will show \cref{prop:S-W,lem:W} in \cref{sec:S-W,sec:W}, respectively.

\subsection{Proof of \cref{prop:S-W}}\label[sec]{sec:S-W}
Throughout this section,
we suppose that $n\ge 100$ and $(a,b)\in S_n$.
Let $J=A_0\cap A_a\cap A_b$.
We will establish \cref{prop:S-W} for $r_n'\in\{2,5,0\}$
by combinatorial arguments, and for $r_n'\in\{3,1,4\}$ 
by explicitly constructing an element in $J$ as if $(a,b)\not\in W_n$.
For any integers $m$ and~$M$, we denote $[m,M]=\{m,m+1,\dots,M\}$.
When $r_m=r_M$, we define 
\[
g(m,M)=\{m,\,m+3,\,m+6,\,\dots,\,M\}.
\]

\begin{lemma}\label{lem:fence}
Let $m<M$ be integers such that $r_m=r_M$.
Suppose that $g(m,M)\subseteq A_x\cap A_y$ and $z\in[m-6,\,M+9]$. 
If $A_x\cap A_y\cap A_z=\emptyset$,
then 
\[
z\in\{m-4,\,m-1,\,m+2,\,M-2,\,M+1,\,M+4\}.
\]
\end{lemma}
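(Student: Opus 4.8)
The plan is to prove the contrapositive: for every $z\in[m-6,\,M+9]$ lying outside the six-element set $E=\{m-4,m-1,m+2,M-2,M+1,M+4\}$, I will show $A_x\cap A_y\cap A_z\neq\emptyset$. Since $g(m,M)\subseteq A_x\cap A_y$ by hypothesis, it suffices to exhibit one element $w\in A_x\cap A_y$ with $w\in A_z$; for most $z$ this $w$ can be taken inside $g(m,M)$ itself, so that $w\in A_x\cap A_y$ is automatic and I only need $w-z\in A_0$. I would therefore first reduce the whole statement to the membership question whether $(g(m,M)-z)\cap A_0\neq\emptyset$.

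The core computation exploits the explicit shape of $A_0$ near the origin, read off from \cref{lem:A}: its members near $0$ are $0$ together with the $\pm i$ having $5\le i<n/2$ and $r_i\neq1$, and the decisive feature is a residue asymmetry. The residue-$1$ members are exactly $\{-5,-8,-11,\dots\}$ (all negative), the residue-$2$ members are exactly $\{5,8,11,\dots\}$ (all positive), while the residue-$0$ members are $\{0,\pm6,\pm9,\dots\}$ (symmetric, missing only $\pm3$). Every term of $g(m,M)-z$ lies in the single class $\sigma\equiv r_m-z\pmod3$, so I would split into $\sigma\in\{0,1,2\}$. For $\sigma=0$ the progression has at least two residue-$0$ terms and so cannot avoid $\{0,\pm6,\pm9,\dots\}$, giving a witness always. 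For $\sigma=1$ it meets $A_0$ unless its smallest term $m-z$ is at least $-2$, which (in range, with $z\equiv r_m-1$) pins $z$ to $\{m-4,m-1,m+2\}$; symmetrically, for $\sigma=2$ it meets $A_0$ unless its largest term $M-z$ is at most $2$, pinning $z$ to $\{M-2,M+1,M+4,M+7\}$. Taking the witness to be an extreme term ($w=M$ or $w=m$) keeps $\lvert w-z\rvert<n/2$, so no wrap-around past $\pm n/2$ interferes; this is routine bookkeeping once the three residue pictures are drawn.

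Comparing these seven ``bad for $g(m,M)$'' values with $E$, all are accounted for except the single value $z=M+7$, and I expect this to be the one genuine obstacle. Here $g(m,M)\cap A_z=\emptyset$ yet $M+7\notin E$, so the conclusion forces me to show $A_x\cap A_y\cap A_{M+7}\neq\emptyset$ using \emph{more} than $g(m,M)$; note that the mirror $w\mapsto m+M-w$ sends $M+7$ to $m-7$, which falls just outside $[m-6,M+9]$, explaining why the list has six values rather than seven. The resolution is that $g(m,M)\subseteq A_x$ cannot hold ``thinly'': a bulk region of $A_x$ (and of $A_y$) realises the full $2$-out-of-$3$ pattern, so containing the residue-$r_m$ progression forces $A_x$ to carry, alongside it, a second residue class and to persist beyond the endpoints. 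I would make this precise by locating the bulks of $A_x$ and $A_y$ relative to $[m,M]$ and then producing a common element of $A_x\cap A_y$ of residue $\neq r_m$, which is exactly what the bulk of $A_{M+7}$ admits near $M$. The delicate sub-case is when the two extra residues contributed by $A_x$ and $A_y$ differ, so that $A_x\cap A_y$ is only residue-$r_m$ near $[m,M]$; there I would reach farther along the bulks, or invoke wrap-around past $\pm n/2$ when $M-m$ is close to $n/2$, to capture the witness. This endpoint analysis at $z=M+7$, reconciling ``$g(m,M)$ alone fails'' with ``the full intersection survives'', is where the real work lies, the remainder being a finite, mechanical residue check.
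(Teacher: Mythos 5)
Your main line of attack coincides with the paper's: for each $z$ outside the exceptional set you exhibit a witness $w\in g(m,M)$ with $w-z\in A_0$, reading off from \cref{lem:A} that near the origin $A_0$ meets the residue class $0$ in all multiples of $3$ except $\pm3$, meets the class $1\pmod 3$ only in $\{-5,-8,-11,\dots\}$, and meets the class $2\pmod 3$ only in $\{5,8,11,\dots\}$. Your three-case residue bookkeeping is correct, and it is in fact tighter than the paper's own proof, which relies on the fixed witnesses $z\pm5$, $z\pm6$ and consequently says nothing about $z=M+8$ and $z=M+9$ (both harmless: the witness $M$ works, since $-8,-9\in A_0$).

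The one real problem is $z=M+7$, and you have diagnosed it precisely: every difference $w-(M+7)$ with $w\in g(m,M)$ has the form $-(3k+7)$, which has residue $1$ and is therefore never in $A_0$, so $g(m,M)$ alone yields no witness, yet $M+7$ is absent from the stated conclusion. The paper's proof simply does not address this value, so the lemma as literally stated is not established there either. Your proposed repair, however, is not yet a proof. The ``bulk'' heuristic fails locally: taking $x=M+5$ and $y=m-5$ (which do satisfy $g(m,M)\subseteq A_x\cap A_y$ when $M-m+5<n/2$), the residues available on $[m,M]$ in $A_x$, $A_y$ and $A_{M+7}$ are respectively $\{M,M+2\}$, $\{M,M+1\}$ and $\{M+1,M+2\}$ modulo $3$, whose triple intersection is empty; the nearest candidate witness then lies beyond $M+10$ or below $m-10$, and whether it survives depends on the actual positions of $x,y$ in $\mathbb{Z}_n$ and on wrap-around past $\pm n/2$ --- exactly the ``delicate sub-case'' you defer. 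The clean resolution is not to prove more but to claim less: add $M+7$ to the exceptional list, or restrict $z$ to $[m-6,M+6]$. This costs nothing downstream, since every invocation of \cref{lem:fence} in the proof of \cref{prop:S-W} has $z=b\le M-3$ by \cref{ub:b}, so only the lower-end exceptions $m-4$, $m-1$, $m+2$ are ever used.
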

\begin{proof}
Let $J=A_x\cap A_y\cap A_z$. Then we have the following.
\begin{itemize}
\item
If $r_{z-m}=0$, then $t\in J$, where
\[
t=\begin{cases}
z,&\text{if $z\in g(m,M)$};\\
z+6,&\text{if $z\in\{m-3,\,m-6\}$};\\
z-6,&\text{if $z\in\{M+3,\,M+6\}$}.
\end{cases}
\]
\item
If $r_{z-m}=1$ and $m-5\le z\le M-5$, then $z+5\in J$.
\item
If $r_{z-m}=2$ and $m+5\le b\le M+5$, then $z-5\in J$.
\end{itemize}
Since $J=\emptyset$ by premise, we obtain the desired result.
\end{proof}
To use \cref{lem:fence} smoothly, we point out that 
for any $(a,b)\in S_n$,
\begin{equation}\label[ineq]{ub:b}
b\le \frac{n+a}{2}\le \frac{2n}{3}<n-8.
\end{equation}

\subsubsection{\cref{prop:S-W} is true if $r_n'=2$}
\label[sec]{ssec:6h+2}
Suppose that $r_n'=2$ and $J=\emptyset$.
By \cref{lem:A},
\begin{equation}\label{intersection:A0a:rn=2}
A_0\cap A_a\supseteq\begin{cases}
g(a+5,\,n-6)\cup g(a+6,\,n-5),&\text{if $r_a=0$};\\
g(a+5,\,n-5),&\text{if $r_a=1$};\\
g(a+6,\,n-6),&\text{if $r_a=2$}.
\end{cases}
\end{equation}
By \cref{lem:fence,ub:b}, 
we deduce that $b\le a+8$.

For any $t,j\in\mathbb{Z}_n$, define
\[
F_{tj}=t+n/2+\{0,\,\pm 1,\,\pm 2,\,\dots,\,\pm j\}.
\]
By \cref{lem:A}, we obtain $F_{t2}\subseteq A_t$.
If $x,y\not\in F_{z4}$, then
$\abs{A_x\cap F_{z2}}\ge 3$
and 
$\abs{A_y\cap F_{z2}}\ge 3$.
Since $\abs{F_{z2}}=5$, we deduce that $A_x\cap A_y\cap F_{z2}\ne\emptyset$,
contradicting the premise $J=\emptyset$.
Henceforth, we can suppose that $F_{t4}\cap T\ne \emptyset$ for any $t\in T$.
Since $a\le n/3<n/2-4$, we deduce that $a\not\in F_{04}$.
Thus $F_{04}=\{b\}$. 
From the definition, we obtain
\begin{equation}\label[ineq]{pf:b}
n/2+a-4\le b\le \floor{(n+a)/2}.
\end{equation}
Hence $n/2+a-4\le a+8$,
contradicting the premise $n\ge 100$.
This proves \cref{prop:S-W} for $r_n'=2$.

\subsubsection{\cref{prop:S-W} is true if $r_n'=5$}
\label{ssec:6h+5}
Suppose that $r_n'=5$ and $J=\emptyset$. 
By \cref{lem:A}, we obtain the relation \eqref{intersection:A0a:rn=2}.
Now we use \cref{lem:fence} with the aids of \eqref{intersection:A0a:rn=2} and \cref{ub:b}.
If $r_a=0$, then $2a\le b\le a-2$, which is impossible.
If $a\in\{1,2,4\}$, it is direct to deduce that $(a,b)\in W_n$.
If $a\ge 5$ and $r_a=2$,
then $b\in\{a+5,\,a+8\}$. 
It follows that $a\in J$, contradicting the premise $J=\emptyset$.
If $a\ge 7$ and $r_a=1$, then $b=a+7$ and $(a,b)=(7,14)\in W_n$.
This completes the proof of \cref{prop:S-W} for $r_n'=5$.

\subsubsection{\cref{prop:S-W} is true if $r_n'=0$}\label[sec]{ssec:6h}
Suppose that $r_n'=0$ and $J=\emptyset$.
Consider the general case 
$T=(x,y,z)\subset \mathbb{Z}_n$ and $J=A_x\cap A_y\cap A_z$.

First, from \cref{lem:A}, we see that $F_{t1}\subseteq A_t$.
If $x,y\not\in F_{z4}$, then
\[
\abs{A_x\cap F_{z1}}\ge 2
\quad\text{and}\quad
\abs{A_y\cap F_{z1}}\ge 2.
\]
Since $\abs{F_{z1}}=3$, we deduce that $A_x\cap A_y\cap F_{z1}\ne\emptyset$,
contradicting the premise $J=\emptyset$.
Henceforth, we can suppose that $F_{t4}\cap T\ne \emptyset$ for any $t\in T$.
As \cref{ssec:6h+2}, we can deriv \cref{pf:b}.

Second, we claim that the elements in $T$ form a complete residue class modulo 3.
Let 
\[
Y_j=\{i\in[\,0,\,n-1]\colon r_i=j\}
\quad\text{and}\quad
A_t^c=\mathbb{Z}_n\backslash A_t.
\]
Suppose that $n=6h$ for some $h\in\mathbb{Z}$.
By \cref{lem:A}, we find
\[
\abs1{A_t^c\cap Y_j}=\begin{cases}
2,&\text{if $j=r_t$};\\
h+1,&\text{otherwise}.
\end{cases}
\]
If $r_x=r_y$, then for $j=r_x$, 
\begin{align*}
\abs{A_x^c\cap Y_j}
+\abs{A_y^c\cap Y_j}
+\abs{A_z^c\cap Y_j}
\le 2+2+(h+1)<2h=\abs{Y_j}.
\end{align*}
Therefore, $J\cap Y_j\ne\emptyset$, contradicting the premise $J=\emptyset$. 
This proves the claim.

Now we are in a position to show that $i^*\in J$,
where $i^*=a+4+r_a$.
In fact, by \cref{pf:b} and the premise $n\ge 100$,
one may compute the distances
\[
d(i^*,\,0)=a+4+r_a,\quad
d(i^*,\,a)=4+r_a,\quad\text{and}\quad
d(i^*,\,b)=b-a-4-r_a.
\]
By \cref{pf:b} and the claim,
we can deduce that 
\begin{align*}
5\le a+4+r_a\not\equiv1\pmod{3}&\implies i^*\in A_0,\\
5\le 4+r_a\not\equiv1\pmod{3}&\implies i^*\in A_a,
\quad\text{and}\quad\\
5\le (n/2-4)-4-2\le b-a-4-r_a\equiv 2\pmod{3}
&\implies i^*\in A_b.
\end{align*}
This completes the prooof of \cref{prop:S-W} for $r_n'=0$.

\subsubsection{\cref{prop:S-W} is true if $r_n'\in\{3,1,4\}$}\label[sec]{ssec:6h+314}
For the remaining case that $r_n'\in \{3,1,4\}$,
we explicitly construct an element in $J$ as if $(a,b)\in S_n\backslash W_n$;
see \cref{tab:6h+3,tab:6h+4,tab:6h+1}.
It is routine to check them by using \cref{lem:A}.

\begin{table}[htbp]
\centering
\caption{An element in $A_0\cap A_a\cap A_b$, where $(a,b)\in S_n\backslash W_n$
and $n=6h+3\ge 100$.}
\label{tab:6h+3}
\begin{tabular}{@{} LLLL @{}}
\toprule
r_a & r_b=0 & r_b=1 & r_b=2 \\ 
\midrule
0 
& -6
& b+5
& \begin{cases}
3h-4,&\text{if $b\in \{3h-4,\,3h+2\}$}\\[3pt]
3h-1,&\text{otherwise}
\end{cases} \\ 
\addlinespace
1 %%%%%%%%%%%%%%%%%%%%%%%%%%%%%%%%%%%%%%%%%%%%%%%%%%%%%%%%%%%%
& \begin{cases}
-5,&\text{if $(a,b)=(1,3)$}\\
b,&\text{otherwise}
\end{cases}
& -5
& \begin{cases}
a+5,&\text{if $b-a>7$}\\[3pt]
3h+b-a,&\text{otherwise}
\end{cases}\\ 
\addlinespace
2  %%%%%%%%%%%%%%%%%%%%%%%%%%%%%%%%%%%%%%%%%%%%%%%%%%%%%%%%%%%%
& -6
& \begin{cases}
a,&\text{if $a>2$}\\[3pt]
3h+6,&\text{otherwise}
\end{cases} 
& \begin{cases}
a,&\text{if $a>2$}\\
11,&\text{if $a=2$, $b\in\{5,11\}$}\\
8,&\text{otherwise}
\end{cases} \\ 
\bottomrule
\end{tabular}
\end{table}

\begin{table}[htbp]
\centering
\caption{An element in $A_0\cap A_a\cap A_b$, where $(a,b)\in S_n\backslash W_n$
and $n=6h+1\ge 100$.}
\label{tab:6h+1}
\begin{tabular}{@{} LLLL @{}}
\toprule
r_a & r_b=0 & r_b=1 & r_b=2 \\ 
\midrule
0 
& -5
& \begin{cases}
-5,&\text{if $b\le 3h-5$}\\
3h-7,&\text{if $b\ge 3h-2$}
\end{cases}
& b \\ 
\addlinespace
1 %%%%%%%%%%%%%%%%%%%%%%%%%%%%%%%%%%%%%%%%%%%%%%%%%%%%%%%%%%%%
& -5
& \begin{cases}
-5,&\text{if $b\le 3h-5$}\\
3h+4,&\text{if $b=3h-2$}\\
b,&\text{otherwise}
\end{cases}
& \begin{cases}
3h+1,&\text{if $b\le 3h-4$}\\[3pt]
-5,&\text{otherwise}
\end{cases} \\ 
\addlinespace
2  %%%%%%%%%%%%%%%%%%%%%%%%%%%%%%%%%%%%%%%%%%%%%%%%%%%%%%%%%%%%
& b+5
& \begin{cases}
3h+4,&\text{if $b\le 3h-2$}\\
b,&\text{otherwise}
\end{cases}
& -6 \\ 
\bottomrule
\end{tabular}
\end{table}

\begin{table}[htbp]
\centering
\caption{An element in $A_0\cap A_a\cap A_b$, where $(a,b)\in S_n\backslash W_n$
and $n=6h+4\ge 100$.}
\label{tab:6h+4}
\begin{tabular}{@{} LLLL @{}}
\toprule
r_a & r_b=0 & r_b=1 & r_b=2 \\ 
\midrule
0 
& -5
& \begin{cases}
-5,&\text{if $b\le 3h-5$}\\[3pt]
3h-7,&\text{otherwise}
\end{cases}
& -6 \\ 
\addlinespace
1 %%%%%%%%%%%%%%%%%%%%%%%%%%%%%%%%%%%%%%%%%%%%%%%%%%%%%%%%%%%%
& -5
& \begin{cases}
-5,&\text{if $b\le 3h-5$}\\[3pt]
b,&\text{if $b\ge 3h+4$}\\[3pt]
b+6,&\text{otherwise}
\end{cases}
& \begin{cases}
b+3h,&\text{if $b\le 3h-4$}\\[3pt]
-5,&\text{otherwise}
\end{cases} \\ 
\addlinespace
2  %%%%%%%%%%%%%%%%%%%%%%%%%%%%%%%%%%%%%%%%%%%%%%%%%%%%%%%%%%%%
& b+5
& \begin{cases}
3h+4,&\text{if $b\le 3h-2$}\\
3h-4,&\text{otherwise}
\end{cases}
& -6 \\
\bottomrule
\end{tabular}
\end{table}

\subsection{Proof of \cref{lem:W}}\label[sec]{sec:W}
This section is to devoted to establish \cref{lem:W}.
In view of \cref{tab:def:W}, we need to deal with
$3+2+8+9=22$ cases.
We will handle them independently in 
\cref{prop:a=1:b=2,prop:a=2:b=4:n=6h+3,prop:a=4:b=8:b=11:a=7:b=14:n=6h+5,prop:a=1:b=3h+1:a=2:b=3h+3:n=6h+4,tab:W:6h+1,tab:W:6h+4,tab:W:6h+5}.
\begin{lemma}\label{lem:uuv}
Let $x,y,z$ be distinct elements in $\mathbb{Z}_n$.
Suppose that the edges $e_{i-1}^u$ and $e_i^u$
have the same edge metric representation with respect to 
the vertex triad $\{u_x,\,u_y,\,v_z\}$.
Then these edges have the same edge metric representation with respect to 
any one of the following triads:
\[
\{u_x,\,v_y,\,v_z\},\quad
\{v_x,\,u_y,\,v_z\},\quad
\{v_x,\,v_y,\,v_z\}.
\]
\end{lemma}
\begin{proof}
Recall from \eqref{At:Bt} that $A_t\subseteq B_t$.
Since $e_{i-1}^u$ and $e_i^u$ have the same distance from $u_x$,
we obtain $i\in A_x$. Thus $i\in B_x$ and the edges have the same distance from $v_x$. For the same reason, the edges have the same distance from $v_y$.
Hence they have the same edge metric representation with respect to 
each of the desired vertex sets. This completes the proof.
\end{proof}

Throughout this section, we suppose that $(a,b)\in W_n$.

\subsubsection{\cref{lem:W} is true if $r_n'=1$.}
Suppose that $r_n'=1$. Then 
\[
W_n=\{(1,\,3h-2),\
(2,\,3h),\
(5,\,3h+3)\}
\]
by definition.
In \cref{tab:W:6h+1}, 
we exhibit a pair of edges with the same edge metric representation with respect to 
a triad $R$ in $T(a,b)$, except $\{u_0,v_a,v_b\}$, $\{v_0,u_a,v_b\}$, and $\{v_0,v_a,v_b\}$. 
By \cref{lem:uuv},
this is enough to establish \cref{lem:W} for $r_n'=1$.
It is routine to check the truth of \cref{tab:W:6h+1} by \cref{thm:dist:ve}.
\begin{table}[htbp]
\centering
\caption{A pair of edges together with the same edge metric representation with respect to the vertex triad $\{\alpha_0,\beta_a,\gamma_b\}$, where $n=6h+1\ge 100$.}
\label{tab:W:6h+1}
\begin{tabular}{LLLL}
\toprule
&\multicolumn{3}{C}{(a,b)}\\
\cmidrule(r){2-4}
(\alpha,\beta,\gamma) & (1,\,3h-2) & (2,\,3h) & (5,\,3h+3) \\ 
\midrule
(u,u,u)
& \brk[c]1{e_{3h+3}^u,\,e_{3h+3}^s}
& \brk[c]1{e_2^u,\,e_2^s}
& \brk[c]1{e_5^u,\,e_5^s}\\[3pt]
& (h+1,\,h+2,\,4)
& (2,\,0,\,h+1)
& (4,\,0,\,h+1)\\ \addlinespace
(u,u,v)
& \brk[c]1{e_{3h}^u,\,e_{3h+1}^u}
& \brk[c]1{e_{3h+1}^u,\,e_{3h+2}^u}
& \brk[c]1{e_{3h+4}^u,\,e_{3h+5}^u}\\[3pt] 
& (h+2,\,h+2,\,2)
& (h+2,\,h+2,\,2)
& (h+1,\,h+2,\,2)\\ \addlinespace
(u,v,u)
& \brk[c]1{e_{3h-1}^u,\,e_{3h-1}^s}
& \brk[c]1{e_{3h+4}^u,\,e_{3h+5}^s}
& \brk[c]1{e_{3h+7}^u,\,e_{3h+8}^s}\\[3pt] 
& (h+2,\,h+1,\,1)
& (h+1,\,h+1,\,4)
& (h,\,h+1,\,4)\\ \addlinespace
(v,u,u)
& \brk[c]1{e_{3h+2}^u,\,e_{3h+3}^s}
& \brk[c]1{e_4^u,\,e_4^s}
& \brk[c]1{e_7^u,\,e_7^s}\\[3pt] 
& (h+1,\,h+2,\,4)
& (3,\,2,\,h+1)
& (4,\,2,\,h+1)\\ \addlinespace
(v,v,u)
& \brk[c]1{e_{-3}^u,\,e_{-2}^u}
& \brk[c]1{e_{-1}^u,\,e_0^u}
& \brk[c]1{e_2^u,\,e_3^u}\\[3pt] 
& (2,\,2,\,h+2)
& (1,\,2,\,h+2)
& (2,\,2,\,h+2)\\ 
\bottomrule
\end{tabular}
\end{table}

\subsubsection{\cref{lem:W} is true for $r_n'=3$.}
Suppose that $r_n'=3$. Then $W_n=\{(1,2),\,(2,4)\}$ by definition.

\begin{proposition}\label{prop:a=1:b=2}
Let $n\ge 100$. For any triad $R\in T(1,2)$, 
the edges $e_{-5}^u$ and $e_6^u$
have the same edge metric representation with respect to $R$.
\end{proposition}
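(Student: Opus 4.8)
The plan is to verify, for each of the eight triads $R\in T(1,2)$, that the two edges $e_{-5}^u$ and $e_6^u$ receive identical edge metric representations. By \cref{lem:uuv}, the work collapses dramatically: if I can show that the pair $(e_{-5}^u,e_6^u)$ is unresolved by the representative triad $\{u_0,u_1,u_2\}$, then it is automatically unresolved by all four triads of the form $\{\alpha_0,\beta_1,v_2\}$ with $\alpha,\beta\in\{u,v\}$, because $A_t\subseteq B_t$ means equality of the $u$-distances forces equality of the corresponding $v$-distances. So the real content is to handle the distances from the $u$-vertices directly.

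Concretely, the first step is to recall from \cref{lem:A} the description of $A_0$, which records exactly those $i$ with $d(u_0,e_{i-1}^u)=d(u_0,e_i^u)$. Both of our edges are indexed near $0$; since $e_{-5}^u=e_{n-5}^u$ and $e_6^u$ are edges on the outer cycle, I would compute $d(u_0,e_{-5}^u)$ and $d(u_0,e_6^u)$ using the formula for $d(u_0,e_i^u)$ in \cref{thm:dist:ve}, and likewise the distances from $u_1$ and $u_2$ via the translation identity $d(u_t,e_j^u)=d(u_0,e_{j-t}^u)$ from \cref{lem:dist:symm}. The choice of indices $-5$ and $6$ is clearly engineered so that, after shifting by $0$, $1$, and $2$, each shifted pair of edge-indices lands in a symmetric position relative to the relevant vertex; this is precisely the mechanism by which $A_0$, $A_1$, $A_2$ all contain the right index. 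The cleanest way to organize the verification is to tabulate the triples $\brk1{d(u_t,e_{-5}^u),\,d(u_t,e_6^u)}$ for $t\in\{0,1,2\}$ and observe they agree coordinatewise.

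I would then invoke \cref{lem:uuv} to extend from $\{u_0,u_1,u_2\}$ to the remaining triads $\{u_0,u_1,v_2\}$, $\{u_0,v_1,v_2\}$, $\{v_0,u_1,v_2\}$, $\{v_0,v_1,v_2\}$, and finally handle the triads containing $v_2$ whose equality of $v$-distances follows from $6\in B_0\cap B_1\cap B_2$ via \cref{lem:D}. In fact, since $-5\equiv n-5$ and we need the index $6$ to lie in $A_0\cap A_1\cap A_2$, the heart of the matter is the single containment $6\in A_0$, $6\in A_1$ (i.e.\ $5\in A_0$ after shifting), and $6\in A_2$ (i.e.\ $4\in A_0$), together with the mirror-image statements for $e_{-5}^u$; all of these are read off directly from the explicit form of $A_0$ in \cref{lem:A}, using $r_n'=3$ so that $n$ is odd and $A_0=\{\pm i:5\le i<n/2,\ r_i\ne1\}\cup\{0\}$.

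The main obstacle is not conceptual but bookkeeping: I must be careful that the two edges land in the ranges where the nontrivial branch of the $d(u_0,e_i^u)$ formula applies (neither $i\le 2$ nor $i\ge n-3$ after each shift), and that reducing $e_{-5}^u$ and the shifted indices modulo $n$ does not accidentally push an index into the boundary regime; since $n\ge 100$ this is comfortably satisfied, but it must be checked. The hypothesis $r_n'=3$ (equivalently $n=6h+3$, odd) is what guarantees $\{5,6\}\cap A_0$ behaves correctly and that the symmetric partner $n-5$ is covered by the $-i$ part of $A_0$; I would flag explicitly where oddness of $n$ is used so the reader can confirm the boundary indices never collide.
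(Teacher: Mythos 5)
Your proposal has a genuine gap at its central step. \Cref{lem:uuv}, and the containment $A_t\subseteq B_t$ on which it rests, apply only to pairs of \emph{adjacent} outer edges of the form $(e_{i-1}^u,\,e_i^u)$: membership $i\in A_x$ encodes $d(u_x,e_{i-1}^u)=d(u_x,e_i^u)$, and $A_x\subseteq B_x$ then transfers this to $v_x$. The pair $(e_{-5}^u,\,e_6^u)$ in \cref{prop:a=1:b=2} is not of this form, so neither the lemma nor the containment gives you the $v$-distances for free. Indeed, equality of $u$-distances for two arbitrary edges does not imply equality of $v$-distances (for example $d(u_0,e_0^u)=d(u_0,e_0^s)=0$ while $d(v_0,e_0^u)=1\ne 0=d(v_0,e_0^s)$). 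Worse, the whole reason $(1,2)$ lies in $W_n$ is that \cref{prop:S-W} shows $A_0\cap A_1\cap A_2=\emptyset$ for these residues, so the membership statements you propose to ``read off from \cref{lem:A}'' cannot all hold; concretely, your claim ``$6\in A_2$, i.e.\ $4\in A_0$'' is false, since $A_0$ contains no index of absolute value less than $5$ other than $0$. The non-adjacent, mirror-symmetric pair $(e_{-5}^u,e_6^u)$ is chosen precisely because the adjacent-pair machinery fails here, and its $v$-distances must be computed directly.

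A second, smaller error: you impose the hypothesis $r_n'=3$, but the proposition carries no such hypothesis and is invoked in the paper for $r_n'\in\{3,4,5\}$; restricting to $n=6h+3$ would leave those other cases unproven. The repair is exactly what the paper does: using \cref{thm:dist:ve} together with the translation and reflection identities of \cref{lem:dist:symm}, one computes for every $t\in\{0,1,2\}$ and both edges $e\in\{e_{-5}^u,e_6^u\}$ that $d(u_t,e)=4$ and $d(v_t,e)=3$ (e.g.\ $d(v_2,e_{-5}^u)=d(v_0,e_{n-7}^u)=d(v_0,e_6^u)=3$), uniformly in $n\ge 100$ and independently of $r_n'$; since all six distances agree, every one of the eight triads in $T(1,2)$ fails to resolve the pair. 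Your computation of the $u$-distances is on the right track; it is the transfer to the $v$-distances and the residue restriction that must be replaced by direct verification.
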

\begin{proof}
By \cref{thm:dist:ve}, for any $t\in\{0,1,2\}$ and any $e\in\{e_{-5}^u,\,e_6^u\}$,
one may compute that $d(u_t,e)=4$ and $d(v_t,e)=3$.
\end{proof}

\begin{proposition}\label{prop:a=2:b=4:n=6h+3}
Let $n=6h+3\ge 100$ for some $h\in \mathbb{Z}$.
For any triad $R\in T(2,4)$, 
the edges $e_{3h+2}^u$ and $e_{3h+4}^u$
have the same edge metric representation with respect to $R$.
\end{proposition}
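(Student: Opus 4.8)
The statement to prove is \cref{prop:a=2:b=4:n=6h+3}: for $n=6h+3\ge 100$ and any $R\in T(2,4)$, the edges $e_{3h+2}^u$ and $e_{3h+4}^u$ share an edge metric representation with respect to $R$. Following the pattern set in \cref{prop:a=1:b=2}, the plan is to compute, using \cref{thm:dist:ve}, the six vertex-edge distances $d(\alpha_t,\,e_{3h+2}^u)$ and $d(\alpha_t,\,e_{3h+4}^u)$ for $\alpha\in\{u,v\}$ and each subscript $t\in\{0,2,4\}$, and check that each pair of distances agrees. Since every triad in $T(2,4)$ is of the form $\{\alpha_0,\beta_2,\gamma_4\}$ with $\alpha,\beta,\gamma\in\{u,v\}$, verifying equality coordinatewise for all eight triads is equivalent to verifying it for all six scalar distances.

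The crucial observation, which streamlines the whole computation, is that it suffices to check equality of the distances from the \emph{outer} vertices $u_0,u_2,u_4$. Indeed, the two edges in question are both outer edges $e_{i-1}^u$ and $e_i^u$ with $i=3h+3$, so if $d(u_t,\,e_{3h+2}^u)=d(u_t,\,e_{3h+4}^u)$ then $3h+3\in A_t$ (after translating via \cref{lem:dist:symm}), hence $3h+3\in B_t$ by the inclusion $A_t\subseteq B_t$ established in \eqref{At:Bt}, which forces $d(v_t,\,e_{3h+2}^u)=d(v_t,\,e_{3h+4}^u)$ as well. This is exactly the reduction encapsulated in \cref{lem:uuv}, so I would invoke \cref{lem:uuv} and reduce to checking only the triad $\{u_0,u_2,u_4\}$.

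First I would confirm that $3h+3\in A_0$. By \cref{lem:A}, since $n=6h+3$ is odd and $r_{3h+3}=r_{h+1}$-type residue computation gives $r_{3h+3}=0\ne1$ with $5\le 3h+3<n/2$ (guaranteed by $n\ge 100$, i.e. $h\ge 17$), we get $3h+3\in A_0$, so $d(u_0,\,e_{3h+2}^u)=d(u_0,\,e_{3h+3}^u)$. The slight subtlety is that I need $d(u_0,\,e_{3h+3}^u)=d(u_0,\,e_{3h+4}^u)$ too, i.e. $3h+4\in A_0$; but $r_{3h+4}=1$, so $3h+4\notin A_0$ by \cref{lem:A}. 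Hence the two target edges are \emph{not} adjacent in the sense covered directly by $A_0$, and I must instead compute $d(u_0,\,e_{3h+2}^u)$ and $d(u_0,\,e_{3h+4}^u)$ directly from the ``otherwise'' branch of the $e_i^u$ formula in \cref{thm:dist:ve}, getting $2+\min\!\brk1{\ceil{(3h+2)/3},\ceil{(3h)/3}}$ versus $2+\min\!\brk1{\ceil{(3h+4)/3},\ceil{(3h-2)/3}}$, and checking these coincide. I would then repeat for subscripts $2$ and $4$ by translating via \cref{lem:dist:symm} (so $d(u_2,\,e_j^u)=d(u_0,\,e_{j-2}^u)$ and $d(u_4,\,e_j^u)=d(u_0,\,e_{j-4}^u)$) and reading off the same formula.

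The main obstacle is purely bookkeeping: because $e_{3h+2}^u$ and $e_{3h+4}^u$ straddle the antipode $\floor{n/2}$ of the outer cycle, the minimum in the distance formula is achieved on the clockwise side for one edge and possibly the counterclockwise side for the other, so I must track which branch of the $\min$ and which ceiling rounding applies for each of the three outer base vertices. I would organize this as a small table of the three distance-pairs, compute the ceilings explicitly in terms of $h$, and confirm equality in each row; the constraint $n\ge 100$ ensures all indices stay in the regime where the generic ``otherwise'' formula (rather than the small-$i$ exceptions) applies, eliminating edge cases. Once the three outer-vertex equalities are verified, \cref{lem:uuv} supplies the remaining five triads, completing the proof.
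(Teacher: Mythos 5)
Your reduction to the three outer vertices does not go through, and this is a genuine gap rather than a bookkeeping issue. You justify the reduction by describing the two edges as ``$e_{i-1}^u$ and $e_i^u$ with $i=3h+3$'', but the pair in the statement is $e_{3h+2}^u$ and $e_{3h+4}^u$: these are \emph{not} consecutive outer edges, so \cref{lem:uuv} and the inclusion $A_t\subseteq B_t$ of \eqref{At:Bt} do not apply to them. The only way to use that machinery would be to chain through the middle edge $e_{3h+3}^u$, i.e.\ to show that both $3h+3\in A_t$ and $3h+4\in A_t$ for each $t\in\{0,2,4\}$; this fails already at $t=2$, since $3h+3\in A_2$ would require $3h+1\in A_0$, while $r_{3h+1}=1$ and $3h+1<n/2$ give $3h+1\notin A_0$ by \cref{lem:A}. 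Concretely, $d(u_2,e_{3h+3}^u)=d(u_0,e_{3h+1}^u)=h+3$ whereas $d(u_2,e_{3h+2}^u)=d(u_2,e_{3h+4}^u)=h+2$, so the middle edge is not equidistant and nothing about $v_2$ can be inferred from the behaviour of $u_2$. (You also miscompute $A_0$ at one point: $3h+4\equiv-(3h-1)\pmod n$ and $r_{3h-1}=2$, so in fact $3h+4\in A_0$; but this does not rescue the reduction, since the obstruction sits at $t=2$.)

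What the paper actually does is compute all six distances directly from \cref{thm:dist:ve}: $d(u_t,e)=h+2$ and $d(v_t,e)=h+1$ for every $t\in\{0,2,4\}$ and both edges $e\in\{e_{3h+2}^u,\,e_{3h+4}^u\}$. Your outer-vertex computations are correct and reproduce the first half of this, but you still owe the three inner-vertex checks, which must be done by hand since the shortcut is unavailable; for instance $d(v_2,e_{3h+2}^u)=d(v_0,e_{3h}^u)=\min(h+1,\,h+2)=h+1$ and $d(v_2,e_{3h+4}^u)=d(v_0,e_{3h+2}^u)=\min(h+2,\,h+1)=h+1$, using the $r_i=0$ and $(r_n,r_i)=(0,2)$ branch of the formula for $d(v_0,e_i^u)$.
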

\begin{proof}
By \cref{thm:dist:ve}, for any $t\in\{0,2,4\}$ and any $e\in\{e_{3h+2}^u,\,e_{3h+4}^u\}$, one may compute that 
$d(u_t,\,e)=h+2$ and $d(v_t,\,e)=h+1$.
\end{proof}

By \cref{prop:a=1:b=2,prop:a=2:b=4:n=6h+3}, \cref{lem:W} is true for $r_n'=3$.

\subsubsection{\cref{lem:W} is true for $r_n'=4$}
Suppose that $r_n'=4$. By definition, we have
\[
W_n=\{(1,2),\
(1,\,3h-2),\
(1,\,3h+1),\
(2,3h),\
(2,\,3h+3),\
(4,\,3h+1),\
(5,\,3h+3),\
(8,\,3h+6)\}.
\]
\begin{proposition}\label{prop:a=1:b=3h+1:a=2:b=3h+3:n=6h+4}
Let $n=6h+4\ge 100$ for some $h\in \mathbb{Z}$. Then we have the following.
\begin{enumerate}
\item\label[itm]{itm:a=1:b=3h+1:n=6h+4}
For any triad $R\in T(1,\,3h+1)$, 
the edges $e_{-6}^u$ and $e_{5}^u$
have the same edge metric representation with respect to $R$.
\item\label[itm]{itm:a=2:b=3h+3:n=6h+4}
For any triad $R\in T(2,\,3h+3)$, 
the edges $e_{-5}^u$ and $e_{6}^u$
have the same edge metric representation with respect to $R$.
\end{enumerate}
\end{proposition}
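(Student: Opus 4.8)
The plan is to follow the template of \cref{prop:a=1:b=2,prop:a=2:b=4:n=6h+3}: to show that a fixed pair of edges has the same edge metric representation with respect to \emph{every} triad in $T(a,b)$, it suffices to verify that the two edges are equidistant from $w_t$ for each $w\in\{u,v\}$ and each subscript $t\in\{0,a,b\}$. Indeed, every triad in $T(a,b)$ has the form $\{\alpha_0,\beta_a,\gamma_b\}$ with $\alpha,\beta,\gamma\in\{u,v\}$, so once the six equalities $d(u_t,e)=d(u_t,e')$ and $d(v_t,e)=d(v_t,e')$ are in hand, all eight triads resolve the pair identically. For \cref{itm:a=1:b=3h+1:n=6h+4} the subscripts are $\{0,1,3h+1\}$ and the edges are $e_{-6}^u,e_5^u$; for \cref{itm:a=2:b=3h+3:n=6h+4} the subscripts are $\{0,2,3h+3\}$ and the edges are $e_{-5}^u,e_6^u$.

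Each individual distance will be reduced to \cref{thm:dist:ve} via \cref{lem:dist:symm}. Using the rotational identities $d(u_t,e_j^u)=d(u_0,e_{j-t}^u)$ and $d(v_t,e_j^u)=d(v_0,e_{j-t}^u)$, together with the reflection identity $d(w_0,e_j^u)=d(w_0,e_{-j-1}^u)$, every one of the twelve distances collapses to a single evaluation of the vertex--edge formula at $u_0$ or $v_0$. I expect all of them to land on the common values $d(u_0,\cdot)=d(u_a,\cdot)=4$, $d(u_b,\cdot)=h+1$, $d(v_0,\cdot)=d(v_a,\cdot)=3$, $d(v_b,\cdot)=h$, identically for the two edges in each part, which is precisely what the statement demands.

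A shortcut worth isolating first is the reflection $f\colon w_j\mapsto w_{2-j}$ through the axis at index $1$. For \cref{itm:a=2:b=3h+3:n=6h+4} this map swaps $e_6^u\leftrightarrow e_{-5}^u$, interchanges $w_0\leftrightarrow w_2$, and fixes $w_{3h+3}$ (since $2-(3h+3)\equiv 3h+3\pmod n$). Hence the subscript set $\{0,2,3h+3\}$ is preserved, the equality at $t=3h+3$ is automatic, and the equality at $t=2$ follows from that at $t=0$; so for this part I need only verify equidistance from $u_0$ and $v_0$. Part~\cref{itm:a=1:b=3h+1:n=6h+4} admits no such index-preserving reflection---reflection through $u_0$ swaps the two edges but settles only $t=0$---so there all three subscripts must be computed directly.

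The one genuinely delicate step is the evaluation of the $d(v_0,e_i^u)$ terms, whose formula in \cref{thm:dist:ve} branches on the residues $r_i$ and $r_n$. After applying $d(v_0,e_j^u)=d(v_0,e_{-j-1}^u)$, the indices become quantities such as $3h-4$ and $3h+7$, and one must read off $r_{3h-4}=2$ and $r_{3h+7}=1$ (recalling $r_n=1$ for $n=6h+4$) to select the correct branch; a misread residue would pick the wrong case and destroy the equality. Once these residues are pinned down the remaining arithmetic is routine, and the hypothesis $n\ge 100$ guarantees a clear winner in each minimum of \cref{thm:dist:ve}, so no boundary ambiguities arise and the bookkeeping stays simple.
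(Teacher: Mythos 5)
Your proposal is correct and follows essentially the same route as the paper: both reduce the claim to checking the six equidistance conditions $d(w_t,e)=d(w_t,e')$ for $w\in\{u,v\}$ and $t\in\{0,a,b\}$, and evaluate each distance via \cref{lem:dist:symm} and \cref{thm:dist:ve}, arriving at the same values $4,\,h+1,\,3,\,h$. Your reflection shortcut through index $1$ for part~(2) is a valid minor economy but does not change the substance of the argument.
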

\begin{proof}
By \cref{thm:dist:ve}, we have the following.
\begin{enumerate}
\item
For any edge $e\in\{e_{-6}^u,\,e_{5}^u\}$,
\[
d(u_0,\,e)=d(u_1,\,e)=4,\quad
d(u_{3h+1},\,e)=h+1,\quad
d(v_0,\,e)=d(v_1,\,e)=3,\quad\text{and}\quad
d(v_{3h+1},\,e)=h.
\]
\item
For any edge $e\in\{e_{-5}^u,\,e_{6}^u\}$,
\[
d(u_0,\,e)=d(u_2,\,e)=4,\quad
d(u_{3h+3},\,e)=h+1,\quad
d(v_0,\,e)=d(v_2,\,e)=3,\quad\text{and}\quad
d(v_{3h+3},\,e)=h.
\]
\end{enumerate}
This completes the proof.
\end{proof}

In view of \cref{prop:a=1:b=2,prop:a=1:b=3h+1:a=2:b=3h+3:n=6h+4},
we need to consider another 5 pairs $(a,b)\in W_n$.
By \cref{thm:dist:ve}, 
it is routine to check \cref{tab:W:6h+4}.
\begin{table}[htbp]
\centering
\caption{A pair of edges together with the same edge metric representation
with respect to the triad $\{\alpha_0,\beta_a,\gamma_b\}$, when $n=6h+4\ge 100$.}
\label{tab:W:6h+4}
\begin{tabular}{LLLLLL}
\toprule
&\multicolumn{5}{C}{(a,b)}\\
\cmidrule(r){2-6}
(\alpha,\beta,\gamma) & (1,\,3h-2) & (2,\,3h) & (4,\,3h+1) & (5,\,3h+3) & (8,\,3h+6) \\ 
\midrule
(u,u,u)
& \brk[c]1{e_{3h+3}^u,\,e_{3h+3}^s}
& \brk[c]1{e_{3h+5}^u,\,e_{3h+5}^s}
& \brk[c]1{e_{5}^u,\,e_{5}^s}
& \brk[c]1{e_{3h+8}^u,\,e_{3h+8}^s}
& \brk[c]1{e_{3h+11}^u,\,e_{3h+11}^s}\\[3pt]
& (h+2,h+3,4)
& (h+2,h+2,4)
& (4,1,h+1)
& (h+1,h+2,4)
& (h,h+2,4)\\ \addlinespace
(u,u,v)
& \brk[c]1{e_{3h-1}^u,\,e_{3h}^u}
& \brk[c]1{e_{3h+1}^u,\,e_{3h+2}^u}
& \brk[c]1{e_{3h+3}^u,\,e_{3h+4}^u}
& \brk[c]1{e_{3h+4}^u,\,e_{3h+5}^u}
& \brk[c]1{e_{3h+7}^u,\,e_{3h+8}^u}\\[3pt]
& (h+2,h+2,2)
& (h+3,h+2,2)
& (h+2,h+2,2)
& (h+2,h+2,2)
& (h+1,h+2,2)\\ \addlinespace
(u,v,u)
& \brk[c]1{e_{3h-1}^u,\,e_{3h-1}^s}
& \brk[c]1{e_{3h+5}^u,\,e_{3h+5}^s}
& \brk[c]1{e_{5}^u,\,e_{5}^s}
& \brk[c]1{e_{3h+8}^u,\,e_{3h+8}^s}
& \brk[c]1{e_{3h+11}^u,\,e_{3h+11}^s}\\[3pt]
& (h+2,h+1,1)
& (h+2,h+1,4)
& (4,2,h+1)
& (h+1,h+1,4)
& (h,h+1,4)\\ \addlinespace
(v,u,u)
& \brk[c]1{e_{3h+3}^u,\,e_{3h+3}^s}
& \brk[c]1{e_{4}^u,\,e_{4}^s}
& \brk[c]1{e_{3h+3}^u,\,e_{3h+3}^s}
& \brk[c]1{e_{7}^u,\,e_{7}^s}
& \brk[c]1{e_{10}^u,\,e_{10}^s}\\[3pt]
& (h+1,h+3,4)
& (3,2,h+1)
& (h+1,h+2,2)
& (4,2,h+1)
& (5,2,h+1)\\ \addlinespace
(v,v,u)
& \brk[c]1{e_{-3}^u,\,e_{-2}^u}
& \brk[c]1{e_{-1}^u,\,e_{0}^u}
& \brk[c]1{e_{1}^u,\,e_{2}^u}
& \brk[c]1{e_{2}^u,\,e_{3}^u}
& \brk[c]1{e_{5}^u,\,e_{6}^u}\\[3pt]
& (2,2,h+2)
& (1,2,h+2)
& (2,2,h+2)
& (2,2,h+2)
& (3,2,h+2)\\ 
\bottomrule
\end{tabular}
\end{table}
By \cref{lem:uuv}, we completes the proof of \cref{lem:W} for $r_n'=4$.

\subsubsection{\cref{lem:W} is true for $r_n'=5$}
Suppose that $r_n'=5$. By definition, we have
\[
W_n=\brk[c]{(1,2),\,(1,5),\,(1,8),\,(2,4),\,(2,7),\,(2,10),\,(4,8),\,(4,11),\,(7,14)}.
\]
\begin{proposition}\label{prop:a=4:b=8:b=11:a=7:b=14:n=6h+5}
Let $n=6h+5\ge 100$ for some $h\in \mathbb{Z}$.
Then we have the following.
\begin{enumerate}
\item
For any set $R\in T(4,8)\cup T(4,11)$, 
the edges $e_{3h+4}^v$ and $e_{3h+6}^v$
have the same edge metric representation with respect to $R$.
\item
For any set $R\in T(7,14)$, 
the edges $e_{3h+7}^v$ and $e_{3h+9}^v$
have the same edge metric representation with respect to $R$.
\end{enumerate}
\end{proposition}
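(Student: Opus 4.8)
The plan is to prove Proposition~\ref{prop:a=4:b=8:b=11:a=7:b=14:n=6h+5} by direct computation, following the same template used for the $u$-edge cases in the preceding tables, but now applied to a pair of \emph{inner} edges $e_i^v$ and $e_{i+2}^v$ rather than outer edges. The key observation driving the proof is that all three pairs $(4,8)$, $(4,11)$, and $(7,14)$ share the property $r_a=r_b=1$ (since $4,7\equiv1$ and $8,11,14\equiv2\pmod 3$; note $r_8=r_{11}=r_{14}=2$, so in fact $r_a=1$ and $r_b=2$), and more importantly that the candidate resolving edges are placed symmetrically around the antipode $n/2=3h+2.5$ of $u_0$ on the inner cycle. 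Because the displayed edge in each case is of the form $e_{3h+a}^v$ and $e_{3h+a+2}^v$, the two edges are reflections of one another through the axis of symmetry of the graph, which already forces $d(u_0,\cdot)$ and $d(v_0,\cdot)$ to agree; the work is to check the remaining coordinates.

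First I would reduce to a single representative per $T$-orbit using \cref{lem:uuv}. That lemma, however, is stated only for pairs of the form $(e_{i-1}^u,e_i^u)$, so it does not directly apply to the $e^v$-pairs here. Therefore the honest route is to verify the equality of edge metric representations for \emph{every} triad $R\in T(a,b)$ explicitly, i.e.\ for all eight sign choices $\alpha,\beta,\gamma\in\{u,v\}$, at each of the three relevant subscripts $\{0,a,b\}$. Concretely, for each prescribed vertex $w\in\{u_0,v_0,u_a,v_a,u_b,v_b\}$ I would apply \cref{thm:dist:ve} to compute $d(w,e_{3h+a}^v)$ and $d(w,e_{3h+a+2}^v)$ and confirm they coincide. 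The arithmetic is routine once one substitutes $n=6h+5$ and records $q_n=2h+1$, $r_n=2$, $r_n'=5$, and then reads off $q_i,r_i$ for the two edge indices $i=3h+a$ and $i=3h+a+2$; since the edges are $e^v$-edges, the relevant formula is the $d(\cdot,e_i^v)$ case of \cref{thm:dist:ve}, split according to whether the base vertex is of type $u$ or $v$.

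The main obstacle I anticipate is not conceptual but bookkeeping: the index $i=3h+a$ sits close to the boundary $i\approx n/2$ where the distance formula switches branches between the clockwise and counterclockwise minima, so one must be careful that both edges land in the intended case of the piecewise definition, especially near the $(r_n',i)=(5,\floor{n/2})$ exception that surfaces repeatedly in \cref{thm:dist:vv:i<n/2}. I would organize the computation into a table analogous to \cref{tab:W:6h+1,tab:W:6h+4}, listing for each $(\alpha,\beta,\gamma)$ the common representation triple, and verify the three stated pairs $(4,8)$, $(4,11)$, $(7,14)$ together. The remaining pairs in $W_n$ for $r_n'=5$, namely $(1,2),(1,5),(1,8),(2,4),(2,7),(2,10)$, would be dispatched separately by a further table of outer-edge witnesses together with \cref{lem:uuv}, completing \cref{lem:W} for $r_n'=5$.
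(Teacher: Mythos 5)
Your actual plan---verifying via \cref{thm:dist:ve} that the distances $d(w,e)$ coincide on the two inner edges for every $w\in\{u_0,v_0,u_a,v_a,u_b,v_b\}$, since \cref{lem:uuv} is unavailable for $e^v$-pairs---is exactly the paper's proof of this proposition. One caveat: your motivating remark that the two edges are reflections through the axis fixing $u_0$, so that the $u_0$- and $v_0$-coordinates agree ``for free,'' is incorrect (the reflection $j\mapsto -j$ sends $e_{3h+4}^v$ to $e_{3h-2}^v$, not to $e_{3h+6}^v$; the relevant axis passes through $u_a$), but this does not damage the argument since you verify those coordinates directly anyway.
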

\begin{proof}
By \cref{thm:dist:ve}, we have the following.
\begin{enumerate}
\item
For any edge $e\in\{e_{3h+4}^v,\,e_{3h+6}^v\}$,
\begin{align*}
d(u_0,\,e)&=d(u_4,\,e)=d(u_8,\,e)=d(v_{11},\,e)=h+1,\\
d(u_{11},\,e)&=d(v_4,\,e)=h\quad\text{and}\\
d(v_0,\,e)&=d(v_8,\,e)=h+2.
\end{align*}
\item
For any edge $e\in\{e_{3h+7}^v,\,e_{3h+9}^v\}$,
\[
d(u_0,\,e)=d(u_{14},\,e)=d(v_7,\,e)=h
\quad\text{and}\quad
d(u_7,\,e)=d(v_0,\,e)=d(v_{14},\,e)=h+1.
\]
\end{enumerate}
This completes the proof.
\end{proof}

In view of \cref{prop:a=1:b=2,prop:a=4:b=8:b=11:a=7:b=14:n=6h+5},
we need to consider another 5 pairs $(a,b)\in W_n$.
By \cref{thm:dist:ve}, 
it is routine to check \cref{tab:W:6h+5}.

\begin{table}[htbp]
\centering
\caption{A pair of edges together with the same edge metric representation with respect to the triad $\{\alpha_0,\beta_a,\gamma_b\}$, when $n=6h+5\ge 100$.}
\label{tab:W:6h+5}
\begin{tabular}{RLLL}
\toprule
(\alpha,\beta,\gamma) & (a,b)=(1,5) & (a,b)=(1,8) & (a,b)=(2,4) \\ 
\midrule
(u,u,u)
& \brk[c]1{e_{1}^u,\,e_{1}^s}
& \brk[c]1{e_{1}^u,\,e_{1}^s}
& \brk[c]1{e_{3h+2}^v,\,e_{3h+4}^v}\\[3pt]
& (1,\,0,\,3)
& (1,\,0,\,4)
& (h+1,\,h+1,\,h+1)\\ \addlinespace
(u,u,v)
& \brk[c]1{e_{3h+5}^u,\,e_{3h+3}^s}
& \brk[c]1{e_{3h+5}^u,\,e_{3h+3}^s}
& \brk[c]1{e_{3h+7}^u,\,e_{3h+5}^s}\\[3pt] 
& (h+2,\,h+2,\,h+1)
& (h+2,\,h+2,\,h)
& (h+1,\,h+2,\,h+2)\\ \addlinespace
(u,v,u)
& \brk[c]1{e_{2}^u,\,e_{2}^s}
& \brk[c]1{e_{5}^u,\,e_{5}^s}
& \brk[c]1{e_{3h+2}^v,\,e_{3h+4}^v}\\[3pt] 
& (2,\,2,\,2)
& (4,\,3,\,2)
& (h+1,\,h,\,h+1)\\ \addlinespace
(u,v,v)
& \brk[c]1{e_{3h+5}^u,\,e_{3h+3}^s}
& \brk[c]1{e_{3h+5}^u,\,e_{3h+3}^s}
& \brk[c]1{e_{3h+7}^u,\,e_{3h+5}^s} \\[3pt] 
& (h+2,\,h+1,\,h+1)
& (h+2,\,h+1,\,h)
& (h+1,\,h+1,\,h+2) \\ \addlinespace
(v,u,u)
& \brk[c]1{e_{1}^u,\,e_{1}^s}
& \brk[c]1{e_{1}^u,\,e_{1}^s}
& \brk[c]1{e_{4}^u,\,e_{4}^s} \\[3pt] 
& (2,\,0,\,3)
& (2,\,0,\,4)
& (3,\,2,\,0) \\ \addlinespace
(v,u,v)
& \brk[c]1{e_{3h+5}^u,\,e_{3h+3}^s}
& \brk[c]1{e_{3h+5}^u,\,e_{3h+3}^s}
& \brk[c]1{e_{3h+7}^u,\,e_{3h+5}^s} \\[3pt] 
& (h+1,\,h+2,\,h+1)
& (h+1,\,h+2,\,h)
& (h,\,h+2,\,h+2) \\ \addlinespace
(v,v,u)
& \brk[c]1{e_{3h+4}^u,\,e_{3h+7}^s}
& \brk[c]1{e_{3h+7}^u,\,e_{3h+10}^s}
& \brk[c]1{e_{3h+4}^u,\,e_{3h+3}^s} \\[3pt] 
& (h+1,\,h+2,\,h+2)
& (h,\,h+1,\,h+2)
& (h+1,\,h+2,\,h+2) \\ \addlinespace
(v,v,v)
& \brk[c]1{e_{3h+5}^u,\,e_{3h+3}^s}
& \brk[c]1{e_{3h+5}^u,\,e_{3h+3}^s}
& \brk[c]1{e_{3h+7}^u,\,e_{3h+5}^s} \\[3pt] 
& (h+1,\,h+1,\,h+1)
& (h+1,\,h+1,\,h)
& (h,\,h+1,\,h+2)\\
\bottomrule 
\end{tabular}

\begin{tabular}{LLLL}
\toprule
(\alpha,\beta,\gamma) & (a,b)=(2,7) & (a,b)=(2,10) \\ 
\midrule
(u,u,u)
& \brk[c]1{e_{3h+2}^v,\,e_{3h+4}^v}
& \brk[c]1{e_{3h+2}^v,\,e_{3h+4}^v}\\[3pt]
& (h+1,\,h+1,\,h)
& (h+1,\,h+1,\,h-1)\\ \addlinespace
(u,u,v)
& \brk[c]1{e_{3h+7}^u,\,e_{3h+5}^s}
& \brk[c]1{e_{3h+7}^u,\,e_{3h+5}^s}\\[3pt] 
& (h+1,\,h+2,\,h+1)
& (h+1,\,h+2,\,h)\\ \addlinespace
(u,v,u)
& \brk[c]1{e_{3h+2}^v,\,e_{3h+4}^v}
& \brk[c]1{e_{3h+2}^v,\,e_{3h+4}^v}\\[3pt]
& (h+1,\,h,\,h)
& (h+1,\,h,\,h-1)\\ \addlinespace
(u,v,v)
& \brk[c]1{e_{3h+7}^u,\,e_{3h+5}^s}
& \brk[c]1{e_{3h+7}^u,\,e_{3h+5}^s}\\[3pt] 
& (h+1,\,h+1,\,h+1)
& (h+1,\,h+1,\,h)\\ \addlinespace
(v,u,u)
& \brk[c]1{e_{1}^u,\,e_{2}^u}
& \brk[c]1{e_{1}^u,\,e_{2}^u}\\[3pt] 
& (2,\,0,\,4)
& (2,\,0,\,5)\\ 
\bottomrule
\end{tabular}
\end{table}

By \cref{lem:uuv}, we completes the proof of \cref{lem:W} for $r_n'=5$.

Now, by \cref{prop:S-W,lem:W}, we obtain \cref{prop:edim>=4}.

\section{Proof of \cref{prop:edim<=4}}\label[sec]{sec:edim<=4}

This section is devoted to proving that the edge dimension of 
the graph $P(n,3)$ is at most 4.
We proceed according to the residue $r_n'$.

\subsection{\cref{prop:edim<=4} is true if $r_n=0$.}
By definition, $r=1$ if $r_i=1$, and $r=0$ otherwise.
First of all, the edges whose distances from $u_0$ are less than $3$ are distinguishable;
see~\cref{tab:rn=0:d<3} 
for their edge metric representations with respect to the tetrad
$\{u_0,\,u_1,\,v_2,\,u_{\floor{n/2}-1}\}$,
where an asterisk~$*$ entry means that it is unnecessary to be computed
for the purpose of distinguishing the edge indicated by the row the entry lies in.
\begin{table}[htbp]
\centering
\caption{The edge metric representations of edges $e_i^l$ whose distances from $u_0$ are less than $3$, when $n=3q_n$.}
\label{tab:rn=0:d<3}
\begin{tabular}{@{} LLCLLL @{}}
\toprule
l 
& i 
&d\brk1{u_0,\,e_i^l}
&d\brk1{u_1,\,e_i^l}
&d\brk1{v_2,\,e_i^l}
&d\brk1{u_{\floor{n/2}-1},\,e_i^l} \\ 
\midrule
u 
& 0  & 0 & 0 & * & * \\ 
& 1  & 1 & 0 & 1 & * \\ 
& 2  & 2 & 1 & 1 & * \\ 
& -3 & 2 & 3 & 3 & \floor{q_n/2}+2 \\ 
& -2 & 1 & 2 & 2 & * \\ 
& -1 & 0 & 1 & 2 & * \\ 
\addlinespace
s
& 0  & 0 & 1 & 3 & * \\ 
& 1  & 1 & 0 & 2 & * \\ 
& 2  & 2 & 1 & 0 & * \\ 
& 3  & 2 & 2 & 2 & * \\ 
& -3 & 2 & 3 & 4 & \floor{q_n/2}+2 \\ 
& -2 & 2 & 2 & 3 & * \\ 
& -1 & 1 & 2 & 1 & * \\ 
\addlinespace
v
& 0  & 1 & 2 & 3 & * \\ 
& 1  & 2 & 1 & 3 & \ceil{q_n/2} \\ 
& 3  & 2 & 3 & 3 & \le\floor{q_n/2} \\
& -6 & 2 & 3 & 4 & \floor{q_n/2} \\ 
& -4 & 2 & 3 & 1 & * \\ 
& -3 & 1 & 2 & 4 & * \\ 
& -2 & 2 & 1 & 3 & \ceil{q_n/2}+1 \\ 
& -1 & 2 & 2 & 0 & * \\ 
\bottomrule
\end{tabular}
\end{table}

While most entries in \cref{tab:rn=0:d<3} can be obtained directly
from \cref{thm:dist:ve}, we explain
the $u_{\floor{n/2}-1}$-coordinate of the row $e_3^v$,
which is derived in the following way.
Consider 
\[
i=\ceil{n/2}+4=\begin{cases}
3h+4,&\text{if $n=6h$};\\
3h+6,&\text{if $n=6h+3$}.
\end{cases}
\] 
It follows that 
\[
(q_i,\,r_i,\,r)=\begin{cases}
(h+1,\,1,\,1),&\text{if $n=6h$};\\
(h+2,\,0,\,0),&\text{if $n=6h+3$}.
\end{cases}
\]
Therefore,
\[
d\brk1{u_{\floor{n/2}-1},\,e_3^v}
=d\brk1{u_0,\,e_{\ceil{n/2}+4}^v}
=\min(q_i+r_i+1,\,q_n-q_i+r)
=\begin{cases}
h,&\text{if $n=6h$}\\
h-1,&\text{if $n=6h+3$}
\end{cases}
\]
is at most $\floor{q_n/2}$.

Let $d\ge 3$ be an integer.
By using \cref{thm:dist:ve}, 
one may solve out the set of edges whose distance from $u_0$ is $d$,
under some lower bound conditions on $q_n$ and~$d$; see~\cref{tab:edge:rn=0}.
The lower bound of $d$ works for all integers $n\ge 18$,
since it is derived by requiring that distance from the edge $e_i^l$ to $u_0$
is computed by using the second expression in each of the first three formulas
in \cref{thm:dist:ve}.
\begin{table}
\centering
\caption{The edges $e_i^l$ whose distances from $u_0$ are $d$, with the corresponding lower bounds of $q_n$ and $d$, when $n=3q_n$.}
\label{tab:edge:rn=0}
\begin{tabular}{@{} LLCC @{}}
\addlinespace
\toprule
l 
& i
& \text{lower bound of $q_n$}
& \text{lower bound of $d$} \\ 
\midrule
u 
& 3d-6 & 2d-4 & 3 \\ 
& 3d-7 & 2d-4 & 4 \\ 
& 3d-8 & 2d-5 & 4 \\ 
& 5-3d & 2d-4 & 3 \\ 
& 6-3d & 2d-4 & 4 \\ 
& 7-3d & 2d-5 & 4 \\ 
\addlinespace
s
& \pm(3d-3) & 2d-2 & 3 \\ 
& \pm(3d-5) & 2d-4 & 3 \\ 
& \pm(3d-7) & 2d-4 & 4 \\ 
\addlinespace
v
& 3d-3 & 2d-1 & 3 \\ 
& 3d-5 & 2d-3 & 3 \\ 
& 3d-7 & 2d-3 & 3 \\ 
& -3d  & 2d-1 & 3 \\ 
& 2-3d & 2d-3 & 3 \\ 
& 4-3d & 2d-3 & 3 \\ 
\bottomrule
\end{tabular}
\end{table}
It remains to show that the edge metric representations are distinct.

From \cref{tab:edge:rn=0}, 
we see that the minimum lower bound of $q_n$ is $2d-5$,
which is attained by the edges $e_{3d-8}^u$ and $e_{7-3d}^u$.
In fact, these two edges coincide with each other, because
\[
(3d-8)-(7-3d)=6d-15=3(2d-5)=3q_n=n.
\]
Using the same idea, 
we see 4 edges for which the lower bound of $q_n$ is $2d-4$:
\[
e_{3d-6}^u=e_{6-3d}^u,\quad
e_{3d-7}^u=e_{5-3d}^u,\quad
e_{3d-5}^s=e_{7-3d}^s,\quad\text{and}\quad
e_{3d-7}^s=e_{5-3d}^s.
\]
They have distances $d-1$, $d-2$, $d$, and $d-3$ from $v_2$ respectively,
and thus distinguishable.

Below we can suppose that $q_n\ge 2d-3$.
By \cref{thm:dist:ve}, we compute the metric representations of edges in \cref{tab:edge:rn=0} with respect to the remaining vertex triple $(u_1,v_2,u_{\floor{n/2}-1})$; see~\cref{tab:rn=0:d},
where $I_{even}(q_n)$ equals $1$ if $q_n$ is odd and $0$ if $q_n$ is even. 
\begin{table}
\centering
\caption{The metric representations of the edges whose distances from $u_0$ are the same number $d$, when $n=3q_n\ge 3(2d-3)$.}
\label{tab:rn=0:d}
\begin{tabular}{@{} LLLLLL @{}}
\addlinespace
\toprule
l & i & q_n & d(u_1,\,e_i^l) & d(v_2,\,e_i^l) & d(u_{\floor{n/2}-1},\,e_i^l)\\ 
\midrule
u
& 3d-6 
& \ge 2d-3
& 2\ (d=3) 
& d-1 
& \le \floor{q_n/2}-d+4 \\ 
& 
& 
& d\ (d\ge 4)
& 
& \\ 
& 3d-7 & 2d-3     & d   & d-2 & 0 \\ 
&      & 2d-2     &     &     & 2 \\ 
&      & \ge 2d-1 &     &     & * \\ 
& 3d-8 & \ge 2d-3 & d-1 & d-2 & * \\ 
& 5-3d & 2d-3     & d+1 & d-1 & * \\  
&      & \ge 2d-2 &     & d   & \ceil{q_n/2}-d+4\\ 
& 6-3d & \ge 2d-3 & d   & d   & * \\ 
& 7-3d & \ge 2d-3 & d   & d-1 & \ceil{q_n/2}-d+5\\ 
\addlinespace
s 
& 3d-3 & \ge 2d-2 & d+1 & d   & \le \floor{q_n/2}-d+3 \\ 
& 3d-5 & \ge 2d-3 & d-1 & d   & \le \ceil{q_n/2}-d+3 \\ 
& 3d-7 & 2d-3     & d-1 & d-3 & 1 \\ 
&      & \ge 2d-2  &     &     & \ceil{q_n/2}-d+3 \\ 
& 3-3d & 2d-2     & d+1 & d   & * \\ 
&      & 2d-1     &     & d+1 & 2 \\
&      & \ge 2d   &     & d+2 & \floor{q_n/2}-d+3 \\
& 5-3d & 2d-3     & d   & d-2 & 2 \\ 
&      & \ge 2d-2 & d+1 & d-1 & \floor{q_n/2}-d+3+2 I_{even}(q_n) \\ 
& 7-3d & \ge 2d-3 & d-1 & d   & \floor{q_n/2}-d+5\\ 
\addlinespace
v 
& 3d-3 & 2d-1     & d+1 & d+1 & 1 \\ 
&      & \ge 2d   &     &     & * \\ 
& 3d-5 & 2d-3     & d-1 & d+1 & * \\ 
&      & 2d-2     &     &     & 2 \\ 
&      & \ge 2d-1 &     &     & \ceil{q_n/2}-d+2 \\ 
& 3d-7 & 2d-3     & d-1 & d-3 & 2 \\ 
&      & \ge 2d-2 &     &     & \ceil{q_n/2}-d+2 \\ 
& -3d  & 2d-1     & d+1 & d+1 & * \\ 
&      & 2d       &     & d+2 & 2 \\ 
&      & \ge 2d+1 &     & d+3 & * \\ 
& 2-3d & 2d-3     & d-1 & d-3 & * \\ 
&      & 2d-2     & d   & d-2 & 1 \\ 
&      & \ge 2d-1 & d+1 & d-1 & \floor{q_n/2}-d+2+2 I_{even}(q_n)\\ 
& 4-3d & \ge 2d-3 & d-1 & d+1 & \floor{q_n/2}-d+4 \\
\bottomrule
\end{tabular}
\end{table}
Note that 
\begin{align*}
&3d-5\equiv 4-3d     \pmod{n}\quad\text{when $q_n=2d-3$},\\
&3d-7\equiv 2-3d     \pmod{n}\quad\text{when $q_n=2d-3$},\\
&3d-3\equiv 3-3d     \pmod{n}\quad\text{when $q_n=2d-2$},\\
&3d-3\equiv -3d\quad \pmod{n}\quad\text{when $q_n=2d-1$}.
\end{align*}
From~\cref{tab:rn=0:d},
we see that every edge is recognizable, as desired.

\subsection{\cref{prop:edim<=4} is true if $r_n'=1$.}
By definition, $r=\abs{1-r_i}$.
Let $d\ge 0$ be an integer.
For $d\le2$, all entries except the non-asterisks in the last column in \cref{tab:rn=0:d<3} keep invariant.
The non-asterisks are listed in~\cref{tab:rn'=1:d<3},
From which we see that the edges whose distance $d$ 
from $u_0$ is at most 2 are distinguishable by the quadruple $(u_0,\,u_1,\,v_2,\,u_{\floor{n/2}-1})$.

\begin{table}
\caption{The edge metric representations for $d\le 2$
that contains the vertex $u_{\floor{n/2}-1}$ in an edge resolving set, 
when $n=6h+1\ge 19$.}
\label{tab:rn'=1:d<3}
\centering
\begin{tabular}{@{} LLCCCC @{}}
\toprule
l
& i
& d\brk1{u_0,\,e_i^l}
& d\brk1{u_1,\,e_i^l}
& d\brk1{v_2,\,e_i^l}
& d\brk1{u_{\floor{n/2}-1},\,e_i^l} \\ 
\midrule
u & -3 & 2 & 3 & 3 & h+2 \\ 
s & -3 & 2 & 3 & 4 & h+2 \\ 
v & 1  & 2 & 1 & 3 & h \\ 
  & 3  & 2 & 3 & 3 & h \\ 
  & -6 & 2 & 3 & 4 & h+1 \\ 
  & -2 & 2 & 1 & 3 & h+1 \\ 
\bottomrule
\end{tabular}
\end{table}

Let $d\ge 3$.
Note that 
\[
3d-3\equiv 2-3d
\quad\text{and}\quad
3d-5\equiv -3d\pmod{n}\quad\text{when $d=h+1$}.
\]
We list the edges whose distance from $u_0$ is~$d$,
with their metric representations with respect to the remaining vertex triple $(u_1,v_2,u_{\floor{n/2}-1})$ as in~\cref{tab:6h+1:d},
From which we see that all edges are distinguishable by the tetrad 
$\{u_0,\,u_1,\,v_2,\,u_{\floor{n/2}-1}\}$, as desired.
\begin{table}
\centering
\caption{The metric representations of the edges $e_i^l$ whose distances from $u_0$ are $d$, when $n=6h+1$.}
\label{tab:6h+1:d}
\begin{tabular}{@{} LLLLLL @{}}
\addlinespace
\toprule
l
& i
& d
& d\brk1{u_1,\,e_i^l}
& d\brk1{v_2,\,e_i^l}
& d\brk1{u_{\floor{n/2}-1},\,e_i^l} \\
\midrule
u
& 3d-6 
& h+2
& d\ (d\ge 4)
& d-1 
& h-d+3 \\ 
& 
& h+1
& 
& 
& h-d+2 \\ 
& 
& [3,\,h] 
& 2\ (d=3)
& 
& h-d+4\\ 
& 
& 
& d\ (d\ge 4)
& 
& \\ 
& 3d-7 & [4,\,h+2] & d   & d-2 & * \\ 
& 3d-8 & h+2       & d-1 & d-2 & h-d+2 \\ 
&      & [4,\,h+1] &     &     & * \\ 
& 5-3d & h+2       & d   & d-1 & * \\ 
&      & [3,\,h+1] & d+1 & d   & h-d+5 \\
& 6-3d & h+2       & d   & d-1 & h-d+4 \\
&      & [4,\,h+1] &     & d   & * \\
& 7-3d & [4,\,h+2] & d   & d-1 & h-d+5 \\
\addlinespace
s 
& 3d-3 & [3,\,h+1] & d+1 & d   & \le h-d+3 \\ 
& 3d-5 & [3,\,h+1] & d-1 & d   & \le h-d+3 \\ 
& 3d-7 & h+2       & d-1 & d-3 & h-d+2 \\
&      & [4,\,h+1] &     &     & h-d+3 \\
& 3-3d & h+1       & d   & d+1 & * \\
&      & [3,\,h]   & d+1 & d+2 & * \\
& 5-3d & [3,\,h+1] & d+1 & d-1 & h-d+4\\ 
& 7-3d & h+2       & d-1 & d-2 & h-d+4 \\ 
&      & [4,\,h+1] &     & d   & h-d+4 \\ 
\addlinespace
v 
& 3d-3 & h+1       & d+1 & d-1 & * \\
&      & [3,\,h]   &     & d+1 & * \\ 
& 3d-5 & h+1       & d-1 & d+1 & h-d+3 \\
&      & [3,\,h]   &     &     & h-d+2 \\ 
& 3d-7 & h+2       & d-1 & d-3 & h-d+3 \\ 
&      & [3,\,h+1] &     &     & h-d+2 \\ 
& -3d  & h+1       & d-1 & d+1 & * \\
&      & [3,\,h]   & d+1 & d+3 & * \\
& 2-3d & [3,\,h+1] & d+1 & d-1 & h-d+3 \\
& 4-3d & h+2       & d-1 & d-3 & * \\
&      & h+1       &     & d-1 & * \\
&      & [3,\,h]   &     & d+1 & h-d+3 \\ 
\bottomrule
\end{tabular}
\end{table}

\subsection{\cref{prop:edim<=4} is true if $r_n'=2$.}
From definition, $r=\abs{1-r_i}$.
For $d\le2$, we compute the edge metric representations as in~\cref{tab:6h+2:d<3},
in which
the columns for $d(u_0,e_i^l)$ and $d(u_1,e_i^l)$ are same to those in \cref{tab:rn=0:d<3}.
\begin{table}
\centering
\caption{The metric representations of edges $e_i^l$ whose distances from $u_0$ are at most $2$, when $n=6h+2$.}
\label{tab:6h+2:d<3}
\begin{tabular}{@{} LLCLLL @{}}
\toprule
l 
& i 
&d\brk1{u_0,\,e_i^l}
&d\brk1{u_1,\,e_i^l}
& d\brk1{u_{n/2-3},\,e_i^l}
& d\brk1{v_{n/2-2},\,e_i^l} \\ 
\midrule
u 
& 0  & 0 & 0 & *   & * \\ 
& 1  & 1 & 0 & h+1 & * \\ 
& 2  & 2 & 1 & h+1 & h \\ 
& -3 & 2 & 3 & h+2 & h+1 \\ 
& -2 & 1 & 2 & h+2 & h+1 \\ 
& -1 & 0 & 1 & h+2 & * \\ 
\addlinespace
s
& 0  & 0 & 1 & h+1 & * \\ 
& 1  & 1 & 0 & h   & * \\ 
& 2  & 2 & 1 & h+1 & h-1 \\ 
& 3  & 2 & 2 & h   & * \\ 
& -3 & 2 & 3 & h+2 & h \\ 
& -2 & 2 & 2 & h+1 & h+2 \\ 
& -1 & 1 & 2 & h+2 & h \\ 
\addlinespace
v
& 0  & 1 & 2 & h   & * \\ 
& 1  & 2 & 1 & h-1 & * \\ 
& 3  & 2 & 3 & h-1 & * \\
& -6 & 2 & 3 & h+1 & h-1 \\ 
& -4 & 2 & 3 & h+1 & h \\ 
& -3 & 1 & 2 & h+1 & * \\ 
& -2 & 2 & 1 & h   & * \\ 
& -1 & 2 & 2 & h+1 & h-1 \\ 
\bottomrule
\end{tabular}
\end{table}
Let $d\ge 3$.
Note that 
\begin{align*}
3d-5\equiv 5-3d\quad\text{and}\quad 3d-7\equiv 3-3d\pmod{n}\quad\text{when $d=h+2$}.
\end{align*}
We list the edges whose distance from $u_0$ is~$d$,
with their metric representations with respect to the remaining vertex triple 
in~\cref{tab:6h+2:d}, 
from which we see that all edges are distinguishable, as desired.
\begin{table}
\centering
\caption{The metric representations of the edges $e_i^l$ whose distances from $u_0$ are $d$, when $n=6h+2$.}
\label{tab:6h+2:d}
\begin{tabular}{@{} LLLLLL @{}}
\addlinespace
\toprule
l 
& i 
& d
& d\brk1{u_1,\,e_i^l}
& d\brk1{u_{n/2-3},\,e_i^l}
& d\brk1{v_{n/2-2},\,e_i^l} \\ 
\midrule
u
& 3d-6 
& h+2
& d\ (d\ge 4)
& h-d+4
& * \\
&  
& h+1
& 
& h-d+1
& * \\
& 
& [3,\,h]
& 2\ (d=3)
& h-d+3
& * \\ 
& 
& 
& d\ (d\ge 4)
& 
& * \\   
& 3d-7 & h+2       & d   & h-d+3 & * \\ 
&      & h+1       &     & h-d+2 & * \\ 
&      & [4,\,h]   &     & h-d+4 & * \\ 
& 3d-8 & h+2       & d-1 & h-d+2 & * \\
&      & h+1       &     & h-d+3 & h-d+3 \\
&      & [4,\,h]   &     & h-d+4 & \\ 
& 5-3d & [3,\,h+2]   & \ge d & h-d+5 & * \\
& 6-3d & [4,\,h+2] & d   & h-d+6 & h-d+4 \\
& 7-3d & [4,\,h+2] & d   & h-d+6 &  h-d+5 \\
\addlinespace
s 
& 3d-3 & h+2       & d-1 & h-d+6 & h-d+3 \\ 
&      & h+1       & d+1 & h-d+3 & * \\ 
&      & h         &     & h-d+1 & * \\ 
&      & [3,\,h-1] &     & h-d+2 & * \\ 
& 3d-5 & h+2       & d-1 & h-d+4 & * \\ 
&      & h+1       &     & h-d+1 & * \\ 
&      & [3,\,h]   &     & h-d+2 & * \\ 
& 3d-7 & h+2       & d-1 & h-d+3 & * \\
&      & h+1       &     & h-d+3 & h-d+2 \\	
&      & [4,\,h]   &     & h-d+4 & h-d+2 \\
& 3-3d & h+2       & d-1 & h-d+3 & * \\
&      & [3,\,h+1] & d+1 & h-d+4 & h-d+2 \\
& 5-3d & h+2       & d-1 & h-d+4 & * \\
&      & [3,\,h+1] & d+1 &       & h-d+4\ (d=3) \\ 
&      &           &     &       & h-d+5\ (d\ge4)\\
& 7-3d & [4,\,h+2]   & d-1 & h-d+6 & h-d+5	 \\ 
\addlinespace
v 
& 3d-3 & h+1       & d   & h-d+4 & * \\ 
&      & h         & d+1 & h-d+2 & * \\ 
&      & [3,\,h-1] &     & h-d+1 & * \\ 
& 3d-5 & h+1       & d-1 & h-d+2 & * \\ 
&      & [3,\,h]   &     & h-d+1 & * \\ 
& 3d-7 & [3,\,h+1] & d-1 & h-d+3 & h-d+1  \\ 
& -3d  & h+1       & d   & h-d+3 & h-d+1 \\
&      & [3,\,h]   & d+1 &       &  \\
& 2-3d & h+1       & d   & h-d+3 & \ge h-d+4\\
&      & [3,\,h]   & d+1 &       &  \\
& 4-3d & [3,\,h+1] & d-1 & h-d+4\ (d=3) & * \\
&      &           &     & h-d+5\ (d\ge 4) & * \\
\bottomrule
\end{tabular}
\end{table}

\subsection{\cref{prop:edim<=4} is true if $r_n'=4$.}
By definition, $r=\abs{1-r_i}$. 
For $d\le2$, all entries except the non-asterisks in the last column in \cref{tab:rn=0:d<3} keep invariant.
The non-asterisks are listed in~\cref{tab:6h+4:d<3}, from which we 
see that the edges whose disctances from $u_0$ are at most 2
are distinguishable by $(u_0,\,u_1,\,v_2,\,u_{n/2+3})$.

\begin{table}
\caption{The metric representations for edges with $d\le 2$ that need the vertex $u_{n/2+3}$ as a resolving set, when $n=6h+4$.}
\label{tab:6h+4:d<3}
\centering
\begin{tabular}{@{} CCCCCC @{}}
\toprule
l
& i
& d\brk{u_0,\,e_i^l}
& d\brk{u_1,\,e_i^l}
& d\brk{v_2,\,e_i^l}
& d\brk{u_{n/2+3},\,e_i^l} \\ 
\midrule
u & -3 & 2 & 3 & 3 & h+1 \\ 
s & -3 & 2 & 3 & 4 & h+1 \\ 
v & 1  & 2 & 1 & 3 & h+1 \\ 
  & 3  & 2 & 3 & 3 & h+2 \\ 
  & -6 & 2 & 3 & 4 & h \\ 
  & -2 & 2 & 1 & 3 & h \\ 
\bottomrule
\end{tabular}
\end{table}

Let $d\ge 3$.
Note that 
\begin{align*}
&3d-7\equiv7-3d\quad\text{and}\quad 3d-8\equiv6-3d\pmod{n}\quad\text{when $d=h+3$},\\
&3d-3\equiv5-3d\quad\text{and}\quad 3d-5\equiv3-3d\pmod{n}\quad\text{when $d=h+2$}.
\end{align*}
We list the edges whose distance from $u_0$ is~$d$,
with their metric representations with respect to the remaining vertex triple 
as in~\cref{tab:6h+4:d},
from which we see that all edges are distinguishable, as desired.
\begin{table}
\centering
\caption{The metric representations of the edges $e_i^l$ whose distances from $u_0$ are $d$, when $n=6h+4$.}
\label{tab:6h+4:d}
\begin{tabular}{@{} LLLLLL @{}}
\addlinespace
\toprule
l & i & d & d\brk{u_1,\,e_i^l} & d\brk{v_2,\,e_i^l} & d\brk{u_{n/2+3},\,e_i^l} \\
\midrule
u
& 3d-6 
& [3,\,h+2] 
& 2\ (d=3)
& d-1
& h-d+6\\ 
&  
& 
& d\ (d\ge 4)
& 
& \\ 
& 3d-7 & [4,\,h+3] & d   & d-2 & * \\ 
& 3d-8 & [4,\,h+3] & d-1 & d-2 & h-d+6 \\ 
& 5-3d & h+2       & d+1 & d   & h-d+3 \\
&      & h+1       &     &     & h-d+2 \\
&      & [3,\,h]   &     &     & h-d+4 \\
& 6-3d & h+3       & d-1 & d-2 & * \\
&      & [4,\,h+2] & d   & d   & * \\
& 7-3d & h+3       & d   & d-2 & * \\
&      & h+2       &     & d-1 & h-d+2 \\
&      & [4,\,h+1] &     &     & h-d+4 \\
\addlinespace
s 
& 3d-3 & h+2       & d+1 & d-1 & * \\ 
&      & [3,\,h+1] &     & d   & h-d+5 \\ 
& 3d-5 & [3,\,h+2] & d-1 & d   & h-d+5 \\ 
& 3d-7 & [4,\,h+3] & d-1 & d-3 & h-d+5 \\
& 3-3d & h+2       & d-1 & d   & * \\
&      & [3,\,h+1] & d+1 & d+2 & * \\
& 5-3d & h+2       & d+1 & d-1 & * \\ 
&      & h+1       &     &     & h-d+2 \\ 
&      & [3,\,h]   &     &     & h-d+3 \\ 
& 7-3d & h+3       & d-1 & d-3 & * \\ 
&      & h+2       &     & d-1 & * \\ 
&      & [4,\,h+1] &     & d   & h-d+3 \\ 
\addlinespace
v 
& 3d-3 & h+1       & d+1 & d   & h-d+4 \\ 
&      & [3,\,h]   &     & d+1 & * \\ 
& 3d-5 & [3,\,h+1] & d-1 & d+1 & h-d+4 \\ 
& 3d-7 & [3,\,h+2] & d-1 & d-3 & h-d+4 \\ 
& -3d  & h+1       & d   & d+2 & * \\
&      & [3,\,h]   & d+1 & d+3 & * \\
& 2-3d & h+1       & d+1 & d-1 & h-d+3 \\
&      & [3,\,h]   &     &     & h-d+2 \\
& 4-3d & h+2       & d-1 & d-2 & h-d+3 \\ 
&      & h+1       &     & d   & h-d+2 \\ 
&      & [4,\,h]   &     & d+1 & h-d+2 \\ 
\bottomrule
\end{tabular}
\end{table}

\subsection{\cref{prop:edim<=4} is true if $r_n'=5$.}
Note that in this case the parameter $r=\abs{1-r_i}$.
For $d\le2$, we compute the edge metric representations as in~\cref{tab:6h+5:d<3}.
All entries for the first four columns in \cref{tab:rn=0:d<3} keep invariant.
\begin{table}
\centering
\caption{The metric representations of edges $e_i^l$ whose distances from $u_0$ are at most $2$, when $n=6h+5$.}
\label{tab:6h+5:d<3}
\begin{tabular}{@{} LLCLLL @{}}
\addlinespace
\toprule
l 
& i 
& d\brk{u_0,\,e_i^l}
& d\brk{u_1,\,e_i^l}
& d\brk{u_{(n-3)/2},\,e_i^l}
& d\brk{v_{(n-1)/2},\,e_i^l} \\ 
\midrule
u 
& 0  & 0 & 0 & *   & * \\ 
& 1  & 1 & 0 & h+2 & * \\ 
& 2  & 2 & 1 & h+2 & h+1 \\ 
& -3 & 2 & 3 & h+3 & * \\ 
& -2 & 1 & 2 & h+3 & * \\ 
& -1 & 0 & 1 & h+3 & * \\ 
\addlinespace
s
& 0  & 0 & 1 & h+2 & * \\ 
& 1  & 1 & 0 & h+1 & * \\ 
& 2  & 2 & 1 & h+2 & h \\ 
& 3  & 2 & 2 & h+1 & * \\ 
& -3 & 2 & 3 & h+2 & * \\ 
& -2 & 2 & 2 & h+2 & h+2 \\ 
& -1 & 1 & 2 & h+2 & h+1 \\ 
\addlinespace
v
& 0  & 1 & 2 & h+1 & * \\ 
& 1  & 2 & 1 & h   & * \\ 
& 3  & 2 & 3 & h   & * \\
& -6 & 2 & 3 & h+1 & h-1 \\ 
& -4 & 2 & 3 & h+1 & h+1 \\ 
& -3 & 1 & 2 & h+2 & h \\ 
& -2 & 2 & 1 & h+1 & * \\ 
& -1 & 2 & 2 & h+2 & h \\ 
\bottomrule
\end{tabular}
\end{table}

Let $d\ge 3$. We note that $d=h+3$ happens only when $l=u$.
Since $n=6h+5=6d-13$, among the 18 edges there are only three distinct edges have the same distance $d$ from $u_0$, that is, 
\[
e_{3d-6}^u=e_{7-3d}^u,\quad
e_{3d-7}^u=e_{6-3d}^u,\quad\text{and}\quad
e_{3d-8}^u=e_{5-3d}^u.
\]
Their distances from $u_{(n-3)/2}$ are respectively $2$, $1$, and $0$.
Thus these three edges are distinguishable.
For $d\le h+2$, we compute out~\cref{tab:6h+5:d},
from which we see that all edges are distinguishable. 
\begin{table}
\centering
\caption{The metric representations of the edges $e_i^l$ whose distance from $u_0$ are $d$, when $n=6h+5$ and $d\le h+2$.}
\label{tab:6h+5:d}
\begin{tabular}{@{} LLLLLL @{}}
\addlinespace
\toprule
l 
& i 
& d
& d\brk{u_1,\,e_i^l}
& d\brk{u_{(n-3)/2},\,e_i^l}
& d\brk{v_{(n-1)/2},\,e_i^l} \\ 
\midrule
u  
& 3d-6
& h+2
& d\ (d\ge 4)
& h-d+2=0
& * \\   
& 
& [3,\,h+1]
& 2\ (d=3)
& h-d+4
& h-d+4 \\   
& 
& 
& d\ (d\ge 4)
& 
& * \\   
& 3d-7 & h+2       & d   & h-d+3=1 & h-d+4 \\ 
&      & [4,\,h+1] &     & h-d+5   & * \\ 
& 3d-8 & h+2       & d-1 & h-d+4=2 & h-d+4 \\
&      & [4,\,h+1] &     & h-d+5   &  \\ 
& 5-3d & [3,\,h+2] & d+1 & h-d+5   & * \\
& 6-3d & [4,\,h+2] & d   & h-d+6   & h-d+4 \\
& 7-3d & [4,\,h+2] & d   & h-d+6   & h-d+5 \\
\addlinespace
s 
& 3d-3 & h+2       & d   & h-d+4=2 & h-d+4 \\ 
&      & h+1       & d+1 & h-d+2=1 & * \\ 
&      & [3,\,h]   &     & h-d+3   & * \\ 
& 3d-5 & h+2       & d-1 & h-d+2=0 & * \\ 
&      & [3,\,h+1] &     & h-d+3   & * \\ 
& 3d-7 & h+2       & d-1 & h-d+4=2 & h-d+3 \\
&      & [4,\,h+1] &     & h-d+5   &  \\	
& 3-3d & h+2       & d   & h-d+3=1 & h-d+2 \\
&      & [3,\,h+1] & d+1 & h-d+4   &  \\
& 5-3d & h+2       & d   & h-d+4   & h-d+5 \\
&      & [3,\,h+1] & d+1 &         &  \\ 
& 7-3d & [3,\,h+2] & d-1 & h-d+5\ (d=3)& h-d+5 \\ 
&      &           &     & h-d+6\ (d\ge 4)& * \\ 
\addlinespace
v 
& 3d-3 & h+2       & d-1           & h-d+6=4 & * \\ 
&      & h+1       & d+1           & h-d+4=3 & h-d+4 \\ 
&      & [3,\,h]   &               & h-d+2   & * \\ 
& 3d-5 & h+2       & d-1           & h-d+3=1 & * \\ 
&      & [3,\,h+1] &               & h-d+2   & * \\ 
& 3d-7 & [3,\,h+2] & 2\ (d=3)      & h-d+4   & h-d+2 \\ 
&      &           & d-1\ (d\ge 4) &         &  \\ 
& -3d  & h+2       & d             & h-d+5=3 & * \\
&      & [3,\,h+1] &               & h-d+3   & h-d+1 \\
& 2-3d & [3,\,h+2] & d             & h-d+3   & h-d+5 \\
& 4-3d & [3,\,h+2] & d-2           & h-d+5   & * \\
\bottomrule
\end{tabular}
\end{table}

Now,
we obtain a complete proof of \cref{prop:edim<=4}.
Recall that \cref{thm:edim=4} is true for $n<100$.
By \cref{prop:edim>=4,prop:edim<=4}, 
we complete the proof of \cref{thm:edim=4}.

\section{Concluding remarks}
The graphs $P(7,3)$ and $P(7,2)$ are isomorphic to each other.
By a result of Filipovi\'c et al.~\cite{FKK19}, 
we know that $P(7,2)$ has edge dimension 4.
It is routine to check that the M\"obius-Kantor graph $P(8,3)$ has edge dimensions 4,
and both the graphs $P(9,3)$ and $P(10,3)$ have edge dimension 3.
We think it a challenge to find out the edge dimension of $P(n,k)$ for $k\ge 4$.


\begin{thebibliography}{99}
\bibitem{Als83}
B. Alspach, 
The classification of Hamiltonian generalized Petersen graphs,
J. Combin. Theory Ser. B 34 (1983), 293--312.

\bibitem{ARR81}
B. Alspach, P.J. Robinson and M. Rosenfeld,
A result on Hamiltonian cycles in generalized Petersen graphs,
J. Combin. Theory Ser. B 31 (1981), 225--231.

\bibitem{Ban78}
K. Bannai,
Hamiltonian cycles in generalized Petersen graphs,
J. Combin. Theory Ser. B 24 (1978), 181--188.

\bibitem{BBP08}
A. Behzad, M. Behzad and C.E. Praeger,
On the domination number of the generalized Petersen graphs,
Discrete Math. 308 (2008), 603--610.

\bibitem{BPZ05}
M. Boben, T. Pisanski, and A. \v Zitnik,
$I$-Graphs and the corresponding configurations,
J. Combin. Des. 13 (2005), 406--424.

\bibitem{BS07}
B. Bre\v sara and T.K. \v Sumenjakb,
On the 2-rainbow domination in graphs,
Discrete Appl. Math. 155 (2007) 2394--2400.

\bibitem{CHW92}
G. Chartrand, H. Hevia and R.J. Wilson,
The ubiquitous Petersen graph,
Discrete Math. 100 (1992), 303--311.

\bibitem{CLRS09}
T.H. Cormen, C.E. Leiserson and R.L. Rivest and C.C. Stein, 
Introduction to Algorithms,
MIT press, 2009.

\bibitem{Cox50}
H.S.M. Coxeter, 
Self-dual configurations and regular graphs,
Bull. Amer. Math. Soc. 56 (1950), 413--455.

\bibitem{CP72}
F. Castagna and G. Prins,
Every generalized Petersen graph has a Tait coloring,
Pacific J. Math. 40 (1972), 53--58.

\bibitem{DM17}
A. Daneshgar and M. Madani,
On the odd girth and the circular chromatic number of generalized Petersen graphs,
J. Comb. Optim. 33 (2017), 897--923.

\bibitem{EG19}
G.B. Ekinci and J.B. Gauci,
On the reliability of generalized Petersen graphs,
Discrete Appl. Math. 252 (2019), 2--9.

\bibitem{FGW71}
R. Frucht, J.E. Graver, and M.E. Watkins,
The groups of the generalized Petersen graphs,
Proc. Cambridge Philos. Soc. 70 (1971), 211--218.

\bibitem{FKK19}
V. Filipovi\'c, A. Kartelj, and J. Kratica,
Edge metric dimension of some generalized Petersen graphs,
Results Math. 74 (2019), Article number 182.

\bibitem{Hli06}
P. Hlin\v en\'y, 
Crossing number is hard for cubic graphs,
J. Combin. Theory Ser. B 96(4) (2006), 455--471. 
 
\bibitem{HS93B}
D.A. Holton and J. Sheehan,
The Petersen Graph,
Cambridge Univ. Press, 1993.

\bibitem{JW19}
D.D.D. Jin and D.G.L. Wang, 
On the minimum vertex cover of generalized Petersen graphs, 
Discrete Appl. Math. 266 (2019), 309--318.

\bibitem{KKTY17}
A. Kelenc, D. Kuziak, A. Taranenko, and I.G. Yero,
Mixed metric dimension of graphs,
Appl. Math. Comput. 314 (2017), 429--438.

\bibitem{KMM17}
Y.S. Kwon, A.D. Mednykh and I.A. Mednykh,
On Jacobian group and complexity of the generalized Petersen graph $\mathrm{GP}(n,k)$
through Chebyshev polynomials,
Linear Algebra Appl. 529 (2017), 355--373.

%\bibitem{KP19}
%M. Krnc and T. Pisanski, 
%Generalized Petersen graphs and Kronecker covers,
%Discrete Math. Theor. Comput. Sci. 21(4) (2019).
%{\tt https://doi.org/10.23638/DMTCS-21-4-15.}

\bibitem{KTY18}
A. Kelenc, N. Tratnik, and I.G. Yero,
Uniquely identifying the edges of a graph: the edge metric dimension,
Discrete Appl. Math. 251 (2018), 204--220.

\bibitem{KW20+}
M. Krnc and R.J. Wilson, 
Recognizing generalized Petersen graphs in linear time, 
Discrete Appl. Math. 2020.
{\tt https://doi.org/10.1016/j.dam.2020.03.007.}

\bibitem{Lov97}
M. Lovre\v ci\v c Sara\v zin,
A note on the generalized Petersen graphs that are also Cayley graphs,
J. Combin Theory Ser. B 69 (1997), 226--229.

\bibitem{NS95}
R. Nedela and M. \v{S}koviera,
Which generalized Petersen graphs are Cayley graphs?
J. Graph Theory 19 (1995), 1--11.

\bibitem{PY20}
I. Peterin and I.G. Yero,
Edge metric dimension of some graph operations,
Bull. Malays. Math. Sci. Soc. 43 (2020), 2465--2477.

\bibitem{RS02}
R.B. Richter and G. Salazar,
The crossing number of $P(N,3)$,
Graphs Combin. 18 (2002), 381--394.

\bibitem{Sch89}
A.J. Schwenk,
Enumeration of Hamiltonian cycles in certain generalized Petersen graphs,
J. Combin. Theory Ser.~B 47 (1989), 53--59.

\bibitem{Sla75}
P.J. Slater,
Leaves of trees,
Congr. Numer. 14 (1975), 549--559.

\bibitem{SR84}
S. Stueckle and R.D. Ringeisen,
Generalized Petersen graphs which are cycle permutation graphs,
J. Combin. Theory Ser. B 47 (1984), 142--150.

\bibitem{Tut69B}
W.T. Tutte, 
A geometrical version of the four color problem,
in book: 
Combinatorial Mathematics and Its Applications 
(Monogr. Ser. Probab. Stat.):
Proceedings of the conference held at the Univ. North Carolina
at Chapel Hill, April 10th-14th, 1967 (R.C. Bose and T.A. Dowling, ed.),
UNC Press, Chapel Hill, 2011 (originally published in 1969).

\bibitem{Wat69}
M.E. Watkins,
A theorem on Tait colorings with an application to the generalized Petersen graphs,
J. Combin. Theory 6 (1969), 152--164. 

\bibitem{XK11}
G. Xu and L. Kang, 
On the power domination number of the generalized Petersen graphs, 
J. Comb. Optim. 22 (2011), 282--291. 

\bibitem{Xu09}
G. Xu,
2-rainbow domination in generalized Petersen graphs $P(n,3)$,
Discrete Appl. Math. 157 (2009), 2570--2573.

\bibitem{Yero16}
I.G. Yero,
Vertices, edges, distances and metric dimension in graphs,
Electron. Notes Discrete Math. 55 (2016), 191--194.

\bibitem{YW18}
Z. Yang and B. Wu,
Strong edge chromatic index of the generalized Petersen graphs,
Appl. Math. Comput. 321 (2018), 431--441.

\bibitem{ZTSX19}
E. Zhu, A. Taranenko, Z. Shao and J. Xu,
On graphs with the maximum edge metric dimension,
Discrete Appl. Math. 257 (2019), 317--324.

\bibitem{Zub18}
N. Zubrilina, 
On the edge dimension of a graph,
Discrete Math. 341(7) (2018), 2083--2088.
\end{thebibliography}
\end{document}